\newcommandx{\todocomment}[2][1=]{\todo[linecolor=Plum,backgroundcolor=Plum!25,bordercolor=Plum,#1]{{\bf TODO (comment):} #2}}
\newcommandx{\todourgent}[2][1=]{\todo[linecolor=red,backgroundcolor=red!25,bordercolor=red,#1]{{\bf TODO (urgent):} #2}}
\newcommandx{\todohidden}[2][1=]{}
\newcommand{\Hom}{\operatorname{Hom}}
\newcommand{\End}{\operatorname{End}}
\newcommand{\id}{{\operatorname{id}}}
\newcommand{\Sl}{\mathfrak{sl}}
\newcommand{\Gl}{\mathfrak{gl}}
\newcommand{\Z}{\mathbb{Z}}
\newcommand{\C}{\mathbb{C}}
\newcommand{\Q}{\mathbb{Q}}
\newcommand{\mathscr}{\mathcal}
\definecolor{rdylgn60}{HTML}{D73027}
\definecolor{rdylgn61}{HTML}{FC8D59}
\definecolor{rdylgn62}{HTML}{FEE08B}
\definecolor{rdylgn63}{HTML}{D9EF8B}
\definecolor{rdylgn64}{HTML}{91CF60}
\definecolor{rdylgn65}{HTML}{1A9850}
\definecolor{rdylgn66}{HTML}{006837}
\definecolor{ca1}{HTML}{72497F}
\definecolor{ca2}{HTML}{5C7B78}
\definecolor{cb1}{HTML}{0862A0}
\definecolor{cb2}{HTML}{CD9B3E}
\definecolor{cc1}{HTML}{C55824}
\definecolor{cc2}{HTML}{288338}
\definecolor{cd}{HTML}{CF838A}
\definecolor{ce}{HTML}{FFAF3A}
\definecolor{cf}{HTML}{AFBF5A}
\definecolor{cg}{HTML}{7ACF99}
\definecolor{ch}{HTML}{8AB1CF}
\definecolor{ci}{HTML}{3B58CF}
\definecolor{cj}{HTML}{736163}
\definecolor{lightgray}{HTML}{DDDDDD}
\theoremstyle{plain}
\newtheorem{theorem}{Theorem}[section]
\newtheorem{definition}[theorem]{Definition}
\newtheorem{example}[theorem]{Example}
\newtheorem{lemma}[theorem]{Lemma}
\newtheorem{corollary}[theorem]{Corollary}
\newtheorem{conjecture}{Conjecture}
\theoremstyle{remark}
\newtheorem{remark}{Remark}[section]
\numberwithin{equation}{section}
\tikzset{%
  spot/.style={color=black, thin, dashed},
  rline/.style={color=green, line width=2pt},
  sline/.style={color=blue, line width=2pt},
  tline/.style={color=red, line width=2pt},
  uline/.style={color=green, line width=2pt},
  line/.style={color=#1, line width=2pt},
  line/.default=blue,
  rdot/.style={color=green, thin, fill},
  sdot/.style={color=blue, thin, fill},
  tdot/.style={color=red, thin, fill},
  udot/.style={color=green, thin, fill},
  dot/.style={color=#1, thin, fill},
  dot/.default=blue
}
\newcommand{\smolyng}[1]{\vcenter{\hbox{\Yboxdim{4pt}\yng #1}}}
\newcommand{\tinyyng}[1]{\vcenter{\hbox{\Ylinethick{1pt}\Yboxdim{3pt}\yng #1}}}
\newcommand{\colroot}[1]{{\Ylinecolour{red!60!black}\tinyyng{#1}}}
\newcommand{\colwei}[1]{{\Ylinecolour{green!60!black}\tinyyng{#1}}}
\newcommand{\colweib}[1]{{\Ylinecolour{green!60!black}\smolyng{#1}}}
\newcommand{\egfrac}[3]{\frac{[c_{#1#2}^{\,\colwei{#3}}]}{[c_{#1#2}^{\,\colwei{#3}}-1]}}
\newcommand{\TL}{{\rm TL}}
\newcommand{\JW}{{\rm JW}}
\newcommand{\Y}{{\mathbb{Y}}}
\newcommand{\wt}{\operatorname{wt}}
\newcommand{\TLcat}{{\mathscr{T\!\!L}}}
\newcommand{\qn}[1]{{[#1]}}
\newcommand{\gaussianquant}{\genfrac{[}{]}{0pt}{}}
\newcommand{\emptypart}{\pmb{\varnothing}}
\let\emph\relax 
\DeclareTextFontCommand{\emph}{\bfseries\em}
\newcommand{\citecomment}[2][]{\citen{#2}#1\citevar}
\newcommand{\citeone}[1]{\citecomment{#1}}
\newcommand{\citetwo}[2][]{\citecomment[,~#1]{#2}}
\newcommand{\citevar}{\@ifnextchar\bgroup{;~\citeone}{\@ifnextchar[{;~\citetwo}{]}}}
\newcommand{\citefirst}{\@ifnextchar\bgroup{\citeone}{\@ifnextchar[{\citetwo}{]}}}
\author{Stuart Martin and Robert A. Spencer}
\email{sm137@cam.ac.uk}
\thanks{\textit{ORCiDs:} Stuart Martin 0000-0002-7424-8397, Robert A. Spencer 0000-0003-2432-3392}
\thanks{\textit{Corresponding author:} Robert A. Spencer}
\address{Department of Pure Mathematics and Mathematical Statistics; University of Cambridge; Cambridge; CB3 0WA; UK}
\email{maths@robertandrewspencer.com}
\date{}
\NewDocumentCommand{\defterm}{O{#2}m}{\textit{\textbf{#2}}\index{#1}}
\newcommand{\makeframedenv}[2]{%
\surroundwithmdframed[%
  linecolor=#1,%
  linewidth=2pt,%
  skipabove=5pt,%
  leftmargin=-12pt,%
  usetwoside=false,%
  innertopmargin=0pt]{#2}%
}
\newcommand{\bmu}{{\boldsymbol{\mu}}}
\newcommand{\clasp}{clasp}
\newcommand{\clasps}{clasps}
\newcommand{\qfrac}[2]{\frac{[#1]}{[#2]}}
\newcommand{\qbinom}[2]{\genfrac{[}{]}{0pt}{}{#1}{#2}}
\title{Cell Modules for Type $A$ Webs}
\begin{document}

\begin{abstract}
  We examine the cell modules for the category of type $A_n$ webs and their natural cellular forms.
  We modify the bases of these modules, as prescribed by Elias, to obtain  an orthogonal basis of each cell module.
  Hence, we calculate the determinant of the Gram matrix with respect to such bases.
  
  These Gram determinants are given in terms of intersection forms, computed from certain traces of clasps, namely higher order Jones-Wenzl morphisms.
  Additionally, this modified basis is constructed using these clasps, and each clasp is constructed using traces of smaller clasps.
  
  In \cite{elias_2015}, Elias conjectures a value for these intersection forms and verifies it in types $A_1$, $A_2$ and $A_3$.  This paper concludes with a proof of the conjecture for all type $A_n$.

  {\vspace{1em}\noindent\textbf{Key words:} Webs, Gram determinants, cell modules, lowering and raising operators}{}
  {\newline\noindent\textbf{MSC:} 18M30, 20G42}{}
\end{abstract}

\maketitle

\section{Introduction}

The category of (tensor products of) fundamental representations of $U_q(\Sl_n)$ comes equipped with many structural features.
Indeed, it is monoidal, cellular and diagrammatic.
These three structures intersect and interact in a manner which permits us to use one to deduce facts about the others.  

The monoidal structure arises from the ability to take tensor products of representations of $U_q(\Sl_n)$. This gives us the structure of a monoid on the representations. It is associative and has an identity: the trivial representation is $\C$. However, for a general choice of $q$, it is not symmetric: $X\otimes Y$ is not isomorphic to $Y \otimes X$.

The category is cellular because it satisfies the conditions of being an OACC~\cite{soergel_bimodule_book} or, more simply, that each algebra $\End(X)$ is cellular in the sense of Graham and Lehrer\cite{graham_lehrer_1996}.

Finally, we say it is diagrammatic, due to the existence of a presentation by planar diagram generators and relations\cite{cautis_kamnitzer_morrison_2014}, known as webs or spiders.

Of the three, the cellular structure is perhaps the most interesting, as it contains the most information.
However, as we shall see, its cellular structure can be built up out of monoidal generators for the objects and diagrammatic generators for the morphisms.
This is the interplay we shall exploit.

\vspace{1em}

\noindent
Cellular categories are equipped with cell modules.
These can be thought of as the cell modules of each of the cellular endomorphism algebras, $\End(X)$.
If the category is semisimple, these cell modules are indecomposable; if
it is not semisimple, they provide us with a complete list of indecomposable objects occurring as their heads.

To be exact, each module inherits a bilinear pairing from the homomorphism space structure and the quotients of the cell modules by this radical (where the quotient is not zero) provides  a complete set of indecomposable modules.

These categories also admit an integral lattice and through this, their modular representation theory can be studied.
For example, we may take the indecomposable cell modules over characteristic zero\footnote{In fact, the characteristic should be a "semisimple mixed characteristic".} and study their decomposition in positive characteristic.
This decomposition is controlled, to the first order, by the bilinear form on the cell modules;  for example, it gives a Jantzen-like filtration.

Since the integral lattice forms a basis of these modules, even under specialisation, we can study certain properties of the bilinear form over semisimple characteristic in order to deduce properties over positive characteristic.
One such property is the determinant of the form, which this paper investigates.

\vspace{1em}

\noindent
In a semisimple cellular category, each object with a weight has an associated idempotent known as a clasp.
Indeed, if $X$ is an object of weight $\lambda$, there is a morphism $\JW_X \in \End X$ which is idempotent and which is killed by left- or right-composition by any morphism factoring through an object of weight less than $\lambda$.

In this paper we will frequently refer to \lq\lq simple\rq\rq\  examples as both inspiration and touchstones.
Our basic example will be that of the Temperley-Lieb category, $\TLcat$.
In this category (which appears in the study of  $U_q(\Sl_2)$) the clasps are known as Jones-Wenzl elements.
Such elements all exist, which is to say that the category is semisimple, if and only if  $q$ is not a root of unity.

\vspace{1em}

\noindent
The diagrammatic category equivalent to the category of fundamental representations of $U_q(\Sl_n)$ is known as the category of webs or spiders.
The concepts were introduced by Kuperberg for the rank 2 Cartan types~\cite{kuperberg_1996}.
Type $A$ webs were developed by Cautis, Kamnitzer and Morrison using a variation on Howe duality~\cite{cautis_kamnitzer_morrison_2014}.
Thereafter Bodish et. al. extended the construction to type $C_n$ \cite{bodish_elias_rose_tatham_2021}.

These presentations are in terms of generators and relations---a quotient of a certain category by a tensor ideal.
However this doesn't describe the category completely. The related problem of determining a basis for the morphism spaces was started by Elias (for type $A_n$)~\cite{elias_2015}, and continued by Bodish for type $C_2$~\cite{bodish_2020} as well as  Bodish and Wu in type $G_2$~\cite{bodish_wu_2021}.

Using a similar construction, it turns out that calculating clasps in the web categories can be done nearly concurrently to finding bases.
Indeed, we now have constructions for clasps in type $A_n$~\cite{elias_2015}, $C_2$~\cite{bodish_2021} and $G_2$~\cite{bodish_wu_2021}.

The well-known construction of Jones-Wenzl idempotents forms an important archetype for these higher order clasps.
Indeed, the Jones-Wenzl idempotents satisfy
\begin{equation}\label{eq:jones-wenzl-def}
    \vcenter{\hbox{
    \begin{tikzpicture}[scale=0.5]
      \draw (0,0) rectangle (2.5,2.5);
      \node at (1.25, 1.25) {$\JW_{n+1}$};
      \draw[very thick] (-0.5, 0.2) -- (0, 0.2);
      \draw[very thick] (-0.5, 2.3) -- (0, 2.3);
      \node at (-0.25, 1.5) {$\vdots$};
      \draw[very thick] (2.5, 0.2) -- (3, 0.2);
      \draw[very thick] (2.5, 2.3) -- (3, 2.3);
      \node at (2.75, 1.5) {$\vdots$};
    \end{tikzpicture}
    }}
    =
    \vcenter{\hbox{
    \begin{tikzpicture}[scale=0.5]
      \draw (0,0) rectangle (2,2);
      \node at (1, 1) {$\JW_{n}$};
      \draw[very thick] (-0.5, 0.2) -- (0, 0.2);
      \draw[very thick] (-0.5, 1.7) -- (0, 1.7);
      \node at (-0.25, 1.2) {$\vdots$};
      \draw[very thick] (2.5, 0.2) -- (2, 0.2);
      \draw[very thick] (2.5, 1.7) -- (2, 1.7);
      \node at (2.25, 1.2) {$\vdots$};
      \draw[very thick] (-0.5, -0.5) -- (2.5, -0.5);
    \end{tikzpicture}
    }}
    - \frac{[n]}{[n+1]}
    \vcenter{\hbox{
    \begin{tikzpicture}[scale=0.5]
      \draw (-2.75,0) rectangle (-0.75,2);
      \node at (-1.75, 1) {$\JW_n$};
      \draw (0.75,0) rectangle (2.75,2);
      \node at (1.75, 1) {$\JW_n$};
      \draw[very thick] (-3.25, -0.5) -- (-0.5, -0.5);
      \draw[very thick] (3.25, -0.5) -- (0.5, -0.5);
      \draw[very thick] (-3.25, 0.25) -- (-2.75, 0.25);
      \draw[very thick] (-3.25, 1.75) -- (-2.75, 1.75);
      \draw[very thick] (3.25, 0.25) -- (2.75, 0.25);
      \draw[very thick] (3.25, 1.75) -- (2.75, 1.75);
      \draw[very thick] (-0.75, 0.5) -- (0.75, 0.5);
      \draw[very thick] (-0.75, 1.75) -- (0.75, 1.75);
      \node at (-3.05, 1.25) {$\vdots$};
      \node at (0, 1.3) {$\vdots$};
      \node at (3.05, 1.25) {$\vdots$};
      \draw [very thick] (-0.5, -0.5) arc (-90:90:0.35) -- (-0.75, 0.2);
      \draw [very thick] (0.5, -0.5) arc (270:90:0.35) -- (0.75, 0.2);
    \end{tikzpicture}
    }}.
  \end{equation}
Two features of this construction generalise.
Firstly, clasps are inductively defined in terms of smaller clasps. This is sometimes known as the ``triple-clasp formula'' which arises from plethysm rules and can be more readily seen in the following equivalent factorisation of \cref{eq:jones-wenzl-def}.
\begin{equation}\label{eq:jones-wenzl-def2}
    \vcenter{\hbox{
    \begin{tikzpicture}[scale=0.5]
      \draw (0,0) rectangle (2.5,2.5);
      \node at (1.25, 1.25) {$\JW_{n+1}$};
      \draw[very thick] (-0.5, 0.2) -- (0, 0.2);
      \draw[very thick] (-0.5, 2.3) -- (0, 2.3);
      \node at (-0.25, 1.5) {$\vdots$};
      \draw[very thick] (2.5, 0.2) -- (3, 0.2);
      \draw[very thick] (2.5, 2.3) -- (3, 2.3);
      \node at (2.75, 1.5) {$\vdots$};
    \end{tikzpicture}
    }}
    =
    \vcenter{\hbox{
    \begin{tikzpicture}[scale=0.5]
      \draw (-3.75,0) rectangle (-1.75,2);
      \node at (-2.75, 1) {$\JW_n$};
      \draw (1.75,0) rectangle (3.75,2);
      \node at (2.75, 1) {$\JW_n$};
      \draw (-1,0) rectangle (1,2);
      \node at (0,1) {$\JW_{n}$};
      \draw[very thick] (-4.25, -0.5) -- (4.25, -0.5);

      \draw[very thick] (-4.25, 0.25) -- (-3.75, 0.25);
      \draw[very thick] (-4.25, 1.75) -- (-3.75, 1.75);
      \draw[very thick] (4.25, 0.25) -- (3.75, 0.25);
      \draw[very thick] (4.25, 1.75) -- (3.75, 1.75);
      \draw[very thick] (-1.75, 0.25) -- (-1, 0.25);
      \draw[very thick] (1.75, 0.25) -- (1, 0.25);
      \draw[very thick] (1, 1.75) -- (1.75, 1.75);
      \draw[very thick] (-1.75, 1.75) -- (-1, 1.75);
      \node at (-4.05, 1.25) {$\vdots$};
      \node at (-1.4, 1.25) {$\vdots$};
      \node at (1.4, 1.25) {$\vdots$};
      \node at (4.05, 1.25) {$\vdots$};
    \end{tikzpicture}
    }}
    - \frac{[n]}{[n+1]}
    \vcenter{\hbox{
    \begin{tikzpicture}[scale=0.5]
      \draw (-3.75,0) rectangle (-1.75,2);
      \node at (-2.75, 1) {$\JW_n$};
      \draw (1.75,0) rectangle (3.75,2);
      \node at (2.75, 1) {$\JW_n$};
      \draw (-1,0.35) rectangle (1,1.9);
      \node[scale=0.8] at (0,1.125) {$\small\JW_{n-1}$};
      \draw[very thick] (-4.25, -0.5) -- (-1.5, -0.5);
      \draw[very thick] (4.25, -0.5) -- (1.5, -0.5);
      \draw[very thick] (-4.25, 0.25) -- (-3.75, 0.25);
      \draw[very thick] (-4.25, 1.75) -- (-3.75, 1.75);
      \draw[very thick] (4.25, 0.25) -- (3.75, 0.25);
      \draw[very thick] (4.25, 1.75) -- (3.75, 1.75);
      \draw[very thick] (-1.75, 0.5) -- (-1, 0.5);
      \draw[very thick] (1.75, 0.5) -- (1, 0.5);
      \draw[very thick] (1, 1.75) -- (1.75, 1.75);
      \draw[very thick] (-1.75, 1.75) -- (-1, 1.75);
      \node at (-4.05, 1.25) {$\vdots$};
      \node at (-1.4, 1.3) {$\vdots$};
      \node at (1.4, 1.3) {$\vdots$};
      \node at (4.05, 1.25) {$\vdots$};
      \draw [very thick] (-1.5, -0.5) arc (-90:90:0.35) -- (-1.75, 0.2);
      \draw [very thick] (1.5, -0.5) arc (270:90:0.35) -- (1.75, 0.2);
    \end{tikzpicture}
    }}
    .
  \end{equation}
The second feature that generalises is that, the factor $[n]/[n+1]$ comes from a tracing rule:
\begin{equation}
  \vcenter{\hbox{
    \begin{tikzpicture}[scale=0.5]
      \draw[very thick] (-3.25, 0.5) -- (-0.25, 0.5);
      \draw[very thick] (-3.25, 2.05) -- (-0.25, 2.05);
      \node at (-3.05, 1.5) {$\vdots$};
      \node at (-0.4, 1.5) {$\vdots$};
      \draw [very thick] (-3., -0.5) arc (270:90:0.35) -- (-0.5, 0.2);
      \draw [very thick] (-3., -0.5) -- (-0.5, -0.5) arc (-90:90:0.35);
      
      \draw[fill=white] (-2.75,0) rectangle (-0.75,2.3);
      \node at (-1.75, 1.15) {$\JW_n$};
    \end{tikzpicture}
  }}
  =
  \frac{[n+1]}{[n]}
      \vcenter{\hbox{
    \begin{tikzpicture}[scale=0.5]
      \node at (-0.25, 1.2) {$\vdots$};
      \draw[very thick] (-.5, 0.2) -- (3, 0.2);
      \draw[very thick] (-.5, 1.7) -- (3, 1.7);
      \node at (2.75, 1.2) {$\vdots$};
      \draw[fill=white] (0,0) rectangle (2.5,1.9);
      \node at (1.25, 0.95) {$\JW_{n-1}$};
    \end{tikzpicture}
    }}.
\end{equation}
This tracing rule for $\JW_n$ can be computed inductively using \cref{eq:jones-wenzl-def} which in turn uses the tracing rule for $\JW_{n-1}$ and so on.
Thus determining the inductive form of the Jones-Wenzl clasps is computationally equivalent to computing the traces of these clasps.
In the more general case, this trace is called an intersection form and calculating their numerical values can be difficult.
Though these have been calculated for types $A_n$ for $n \le 4$, $C_2$ and $G_2$, in general we must revert to a conjecture due to Elias for these constants.

\vspace{1em}

\noindent
In this paper we draw out the connection between these intersection forms and the Gram determinants of cell modules in the category.
In particular we give expressions for all the cell Gram determinants in terms of these intersection forms.
We then go on to show that Elias' conjecture holds in type $A$, by finding pre-images of two distinguished bases across the quantum skew Howe duality

While this paper concerns type $A$ webs, it should be read with a more general aspect.
Some results, such as \cref{lem:same_cells} hold in more general ``object adapted cellular categories''\cite{spencer_phd}.
These include web categories for other Coxeter types, and diagrammatic Soergel bimodule categories.
If such a category has a monoidal structure which suitably interfaces with the category structure, then further constructions (such as those of the ladder basis and triple-clasps) should generalise.
However, a discussion of monoidal, cellular categories is beyond the scope of this paper.

\vspace{1em}

\noindent
The remainder of the paper is laid out as follows.
In \cref{sec:webs} we review the definition the web category and its intersection forms, and introduce Elias' conjecture for such  forms.
Then, in \cref{sec:cells} we examine the cell modules of this category and recount the ladder basis.
We then derive new bases, which give recurrent solutions to the determinant of the cell module and also determine an orthogonal basis.
Finally, \cref{sec:howe} shows that a previously-known basis of $U_q(\Gl_m)$ is a pre-image of this orthogonal basis.
Using this we are able to confirm Elias' conjectured formula for the intersection forms.

\section{The Category of Type $A$ Webs}\label{sec:webs}
We recall the definition of type $A$ webs.
This work was initiated by Kuperberg~\cite{kuperberg_1996} who derived the diagrammatic form for all rank two Lie type, and then completed by Cautis, Kamnitzer and Morrison who gave diagrammatics for general type $A_n$~\cite{cautis_kamnitzer_morrison_2014}.

The aim is to study the representation theory of $U_q(\Sl_{n})$ over $\Q(q)$.
To do this, it will be sufficient to study the category of (tensor products of) \emph{fundamental representations} of $U_q(\Sl_n)$, denoted $\text{\textbf{Fund}}_n$.
This category encodes the full category of finite dimensional representations through its Karoubi envelope.
The reader is directed to~\cite[\S 3]{elias_2015} for a full description of the appropriate idempotents, which are termed \emph{clasps}.

\begin{remark}
  The reader may wish to overlook the quantum deformation on a first reading.  The appropriate mental notational substitutions are
  \begin{equation}
    U_q(\Sl_n) \mapsto \Sl_n, \quad\quad\quad q \mapsto 1, \quad\quad\quad [n]\mapsto n.
  \end{equation}
  The results that hold for undeformed $\Sl_n$ often also hold for $U_q(\Sl_n)$ under the above notational substitution (see for example the definition of a Jones-Wenzl idempotent using quantum numbers).  However, the derivation of this is not always straightforward.
  For example, the representation theory of $\Sl_n$ occurs as a \textit{limit} of the theory of $U_q(\Sl_n)$ as $q\to 1$.
  In this paper, we make a point of keeping ourselves in the quantum case (i.e. $U_q(\Sl_n)$, $q$ and $[n]$) but the reader may have more intuition if they employ the above mental substitutions.
\end{remark}

Recall that $U_q(\Sl_n)$ has $n-1$ fundamental representations, namely $\{V_i := \bigwedge^i \C^n\}_{i = 1}^{n-1}$.
Here $\C^n$ is given the standard action of $U_q(\Sl_n)$.
In this way we can also consider $V_0 \cong V_n \cong \C$.
If $i + j = k$ then there are natural $U_q(\Sl_n)$-maps $V_i \otimes V_j \to V_k$ and $V_k \to V_i \otimes V_j$.  Note that these maps are not inverse.

The objects in the category $\text{\textbf{Fund}}_n$ are of the form $V_{i_1} \otimes V_{i_2} \otimes \cdots \otimes V_{i_t}$ and the morphisms are generated (through composition and tensor product) by the aforementioned maps between $V_i \otimes V_j$ and $V_k$.
If $\underline{i} = \underline{i_1i_2\cdots i_t}$ is a sequence of elements of $\{1,\ldots,n\}$ we will write $V_{\underline{i}}$ for $V_{i_1} \otimes V_{i_2} \otimes \cdots \otimes V_{i_t}$.

\begin{definition}\label{def:iota}
    There is an important anti-involution on the category, denoted $\iota$.
    This sends the aforementioned generating maps $V_{\underline{ij}} \to V_k$ and $V_k \to V_{\underline{ij}}$ to each other.
\end{definition}

\begin{remark}\label{remark:no_tags}
  Strictly speaking, we should define the objects to also permit tensor factors of the duals of the $V_i$, but since $V_i^* \cong V_{n-i}$ what we have constructed is sufficient.
  In \cite{elias_2015} our category is denoted by $\text{\textbf{Fund}}^+_n$. In the language of the oriented spider category, all our orientations run from left to right.

  In a similar simplification, we have omitted \textit{tags}, preferring to view these as generating morphisms with $k = n$ as in \cite[\S 2.1]{cautis_kamnitzer_morrison_2014}.
\end{remark}

The composition of the map from $V_k$ to $V_{\underline{ij}}$ and the map from $V_{\underline{ij}}$ to $V_k$ again is a multiple of the identity map on $V_k$.
Indeed, let $\qn n = (q^{n} - q^{-n})/(q-q^{-1})$ be the $n$-th quantum number.
Then if $\qn n ! = \qn n \qn {n-1} \cdots \qn 1$ and $\gaussianquant n r = \qn n! / (\qn r ! \qn {n-r}!)$, the multiple is $\gaussianquant k j =\gaussianquant k i$\cite[eq. 2.4]{cautis_kamnitzer_morrison_2014}.

\label{sec:quantum}
We will often use the notation $\delta = [2]$ so that, for example, $[3] = \delta^2-1$ and $[4] = \delta^3-2\delta$ in order to make complex polynomials of quantum numbers more readable. As such, the quantum numbers actually are members of $\Z[\delta] = \Z[q+q^{-1}]$ and it is a remarkable fact that the quantum binomial coefficients are too.

\begin{remark}\label{rem:any_semisimple}
  Our analysis will actually hold over any pointed ring wherein $q$ is not a root of unity.
  In these characteristics, all the quantum numbers are invertible and so the necessary objects will be well defined.
  When $q$ is specialised to a root of unity, certain quantum numbers and quantum binomial coefficients will vanish, and the required clasps will not be well defined.
  In fact, specialisation of $q$ to a root of unity corresponds to the specialisation of $\delta$ to a root of some quantum number.
  
  See~\cite{burrull_libedinsky_sentinelli_2019,martin_spencer_2021} for the correct idempotents in the $A_1$ case over positive characteristic with $q=1$, and $q$ a nontrivial root of unity, respectively.
\end{remark}

To provide a convenient category in which to work, we introduce the category of \emph{webs} or \emph{spiders}.
This category denoted $\text{\textbf{Sp}}_n$ is diagrammatic and is equivalent as a category to $\text{\textbf{Fund}}_n$ and then by isomorphism to $\text{\textbf{Fund}}^+_n$.
Thus we can think of these interchangeably.

The objects are taken from the set of finite words in letters $\{1, \ldots, n-1\}$ and the object $\underline{i}$ in $\text{\textbf{Sp}}_n$ corresponds to the object $V_{\underline i}$ in $\text{\textbf{Fund}}_n^+$.
As such, the tensor product on words is simply concatenation.

Morphisms are linear combinations of \emph{diagrams}\footnote{Technically we are working with the \emph{ladder} incarnation of diagrams from \cite{cautis_kamnitzer_morrison_2014}, however no generality is lost.}.
A diagram is an oriented, decorated, planar graph with edges labelled by the letters $\{1, \ldots, n-1\}$. 
These graphs are taken up to isotopy.
Each non-trivial vertex is trivalent and if, when read left-to-right, one edge splits into two, the sum of the outgoing labels equal that of the incoming.
A similar result holds for the reverse.

For example, the two classes of generating morphisms are represented as below.

\begin{equation}
\vcenter{\hbox{
      \begin{tikzpicture}[scale=2]
        \draw[very thick] (0.2, 0) -- (0.5, 0);
        \draw[very thick] (0.75, 0.25) -- (1.0, 0.25);
        \draw[very thick] (0.75, -0.25) -- (1.0, -0.25);
        \draw[very thick] (0.5, 0) -- (0.75, -0.25);
        \draw[very thick] (0.5, 0) -- (0.75, 0.25);

        \node at (0, 0) {$i+j$};
        
        \node at (1.2, 0.25) {$i$};
        \node at (1.2, -.25) {$j$};
        
      \end{tikzpicture}
    }}\quad\quad\quad\quad\quad\quad
    \vcenter{\hbox{
      \begin{tikzpicture}[scale=2]
        \draw[very thick] (-0.2, 0) -- (-0.5, 0);
        \draw[very thick] (-0.75, 0.25) -- (-1.0, 0.25);
        \draw[very thick] (-0.75, -0.25) -- (-1.0, -0.25);
        \draw[very thick] (-0.5, 0) -- (-0.75, -0.25);
        \draw[very thick] (-0.5, 0) -- (-0.75, 0.25);

        \node at (0, 0) {$i+j$};
        
        \node at (-1.2, -0.25) {$i$};
        \node at (-1.2, 0.25) {$j$};
        
      \end{tikzpicture}
    }}
\end{equation}

We adopt the convention of removing all edges that are labelled with $n$ or $0$.
Indeed, since tensoring with $\C$ is a null operation, we lose no information on the object level.
On the morphism level, vertices with an $n$ or $0$ are turned into bivalent vertices (simply edges).
\begin{example}
 When $n = 2$ so we are examining the representation theory of $U_q(\Sl_2)$, and the well known category to consider is the Temperley-Lieb category where circles resolve to $-[2]$. One can define a functor from the Temperley-Lieb category to $\text{\textbf{Fund}}^+_2$, an isomorphism onto its essential image which is the full subcategory with object set $\{1\}^{*} \simeq \{\bullet\}^* \simeq N$.

 To explain this functor, we temporarily undo the convention of \cref{remark:no_tags}. The generating object $V_2$ of $\text{\textbf{Fund}}^+_2$ is isomorphic to the base field and can be replaced by the monoidal identity, $\mathbb 1$. When doing this, the merge map $V_{\underline{11}} \to V_{\underline 2}$ and the split map $V_{\underline 2} \to V_{\underline{11}}$ can be interpreted (using tags) as morphisms $V_{\underline{11}} \leftrightarrow \mathbb 1$, as in \cite[\S 2.1]{cautis_kamnitzer_morrison_2014}. The functor sends the Temperley-Lieb ``cap'' to the merge map $V_{\underline{11}} \to \mathbb 1$ and the ``cup'' to minus the split map $\mathbb 1 \to V_{\underline{11}}$
\end{example}
\begin{example}
  When considering $U_q(\Sl_3)$ there are two objects which are dual to each other.  We will often consider the set $\{\texttt{+}, \texttt{-}\}^*$ instead of $\{1,2\}^*$.
  Instead of labeling the edges with $\{1,2\}$ we will orient them with the understanding that trivalent vertices are all sources or sinks.
  
  An example diagram from \underline{\texttt{+++}} on the right to \underline{\texttt{++---+}} on the left might be
\begin{equation}
    \vcenter{\hbox{
  \begin{tikzpicture}[scale=0.4]
\begin{scope}[very thick,decoration={
    markings,
    mark=at position 0.5 with {\arrow{>}}}
    ] 
    \draw[very thick, postaction=decorate] (0.5, 4) -- (1.5,4);
    \draw[very thick, postaction=decorate] (1.5, 3) -- (0.5, 3);
    \draw[very thick] (1.5,4) arc (90:-90:0.5);
    
    \draw[very thick, postaction=decorate] (0.5, 5) -- (1.5,5) edge[out=0,in=180] (3,3.5);
    \draw[very thick, postaction=decorate] (3,3.5) -- (4,3.5);
    \draw[very thick, postaction=decorate] (0.5, 0) -- (1.5,0) edge[out=0,in=180] (3,1.5);
    \draw[very thick, postaction=decorate] (3,1.5) -- (4,1.5);
    
    \draw[very thick] (2,1.5)edge[out=180,in=0] (1.5,1);
    \draw[very thick] (2,1.5)edge[out=180,in=0] (1.5,2);
    \draw[very thick, postaction=decorate] (1.5,1) --(0.5, 1);
    \draw[very thick, postaction=decorate] (1.5,2) --(0.5, 2);

    \draw[very thick] (2,1.5) edge[out=0,in=180] (3,2.5);
    \draw[very thick, postaction=decorate] (3,2.5) -- (4,2.5);
    
  \end{scope}  
    \node at (0,0) {\texttt+};
    \node at (0,1) {\texttt-};
    \node at (0,2) {\texttt-};
    \node at (0,3) {\texttt-};
    \node at (0,4) {\texttt+};
    \node at (0,5) {\texttt+};
    
    \node at (4.5,1.5) {\texttt+};
    \node at (4.5,2.5) {\texttt+};
    \node at (4.5,3.5) {\texttt+};
  \end{tikzpicture}
}}.
\end{equation}
\end{example}
There are relations on the morphisms in this category, 
but their exact forms are not important for our study.
They can be found in~\cite{cautis_kamnitzer_morrison_2014}.

\subsection{Weights}\label{subsec:weights}
Each fundamental representation $V_i$ has highest weight a fundamental dominant weight $\varpi_i$.
Viewed as a partition, $\varpi_i = (1^i, 0^{n-i})$.
For any object $\underline x$ in $\textbf{Fund}$, let $\wt(\underline x)$ be $\sum_i \varpi_{x_i}$.
If this is viewed as a $n+1$-part partition $(\lambda_1, \cdots, \lambda_n, 0)$ of $\sum_i x_i$ then $\underline x$ contains $\lambda_{i} - \lambda_{i+1}$ copies of $i$.
The set of all weights, denoted $\Lambda$, is equipped with the dominance partial order, $\prec$.
We will abuse nation slightly and write $\underline{x} \prec \underline{y}$ for sequences $\underline{x}$ and $\underline{y}$ if $\wt(\underline x) \prec \wt(\underline y)$.

\subsection{Compatible Families}
Let $\lambda \in \Lambda$ and suppose $\varphi^\lambda =\{\varphi_{\underline{x},\underline{y}} : \underline{x} \to \underline{y}\}$ is a set of morphisms, where $\underline{x}$ and $\underline{y}$ range over all objects of weight $\lambda$.
Let $J_{< \lambda}$ be the ideal spanned by morphisms that factor through objects of weight less than $\lambda$.
We say that $\varphi^\lambda$ is a \emph{compatible family} if
\begin{itemize}
    \item for each pair of objects $\underline x$ and $\underline y$ of weight $\lambda$, there is a unique $\varphi_{\underline{x},\underline{y}} : \underline{x} \to \underline{y}$ in $\varphi^\lambda$,
    \item $\varphi_{\underline{x},\underline{y}} \circ \varphi_{\underline{z},\underline{x}} = \varphi_{\underline{z},\underline{y}}$ modulo $J_{< \lambda}$ for any three composable elements of $\varphi^\lambda$, and
    \item $\varphi_{\underline{x},\underline{x}} = \id_{\underline{x}}$ modulo $J_{< \lambda}$.
\end{itemize}
The two examples of compatible families we will meet are \clasps{} and neutral ladders.

\begin{lemma}\label{lem:neutrals}
There is a compatible family of morphisms such that $\varphi_{\underline{x},\underline{x}} = \id_{\underline{x}}$ exactly.
\end{lemma}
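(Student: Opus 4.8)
The plan is to construct the required compatible family of \emph{neutral ladders} by hand, exploiting the explicit combinatorics of objects of a fixed weight. Fix a weight $\lambda \in \Lambda$. Recall from \cref{subsec:weights} that an object $\underline{x}$ of weight $\lambda$ is just a word in $\{1,\ldots,n-1\}$ containing exactly $\lambda_i - \lambda_{i+1}$ copies of the letter $i$; so two objects of weight $\lambda$ differ only by a permutation of their letters. The key point is that any two such words are connected by a sequence of adjacent transpositions of letters, and each adjacent transposition $\cdots ab \cdots \leftrightarrow \cdots ba \cdots$ can be realised by a small \emph{neutral ladder} morphism: the diagram that merges the adjacent strands $a,b$ into a single strand labelled $a+b$ (or its dual) and then re-splits it as $b,a$. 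Because this diagram factors through an object where the two strands have been replaced by one strand of label $a+b$, which has the \emph{same} weight $\lambda$, such a ladder does not lie in $J_{<\lambda}$; it is a genuine "weight-$\lambda$ to weight-$\lambda$" isotopy-type move.

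The first step is to make precise a canonical choice. I would pick, for each weight $\lambda$, a distinguished object $\underline{x}_0^\lambda$ — say the word in which all the $1$'s come first, then all the $2$'s, and so on (the "sorted" word) — and for every object $\underline{x}$ of weight $\lambda$ fix a reduced word for the permutation sorting $\underline{x}$ to $\underline{x}_0^\lambda$; composing the corresponding elementary neutral ladders gives a morphism $\ell_{\underline{x}}\colon \underline{x} \to \underline{x}_0^\lambda$. One then sets $\varphi_{\underline{x},\underline{y}} := \ell_{\underline{y}}^{-1}\circ \ell_{\underline{x}}$, where $\ell_{\underline{y}}^{-1}$ is the reverse ladder (split then re-merge). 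The second step is to check this is well-defined, i.e. independent of the chosen reduced words: here I would use the braid/commutation relations among the elementary ladder moves, which hold \emph{on the nose} (not just modulo $J_{<\lambda}$) — far-apart transpositions literally commute as diagrams, and the braid relation for three adjacent strands is a local web relation. The third step is to verify the three axioms of a compatible family: uniqueness of $\varphi_{\underline{x},\underline{y}}$ is built into the construction; $\varphi_{\underline{x},\underline{x}} = \id_{\underline{x}}$ holds \emph{exactly} because $\ell_{\underline{x}}^{-1}\circ\ell_{\underline{x}}$ is a merge-then-split of the identity permutation, which is literally the identity diagram (this is exactly the "$=\id$ exactly, not merely mod $J_{<\lambda}$" strengthening the lemma asks for); and the cocycle condition $\varphi_{\underline{x},\underline{y}}\circ\varphi_{\underline{z},\underline{x}} = \ell_{\underline{y}}^{-1}\ell_{\underline{x}}\ell_{\underline{x}}^{-1}\ell_{\underline{z}}$ reduces, modulo the local relation that $\ell_{\underline{x}}\ell_{\underline{x}}^{-1}$ is the identity up to lower terms, to $\ell_{\underline{y}}^{-1}\ell_{\underline{z}} = \varphi_{\underline{z},\underline{y}}$.

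The step I expect to be the main obstacle is confirming that $\ell_{\underline{x}}^{-1} \circ \ell_{\underline{x}}$ — and more generally that the elementary "merge then immediately re-split in the same order" ladder — equals the identity \emph{on the nose}, and that reordering-then-reordering-back does too, so that the "$=\id$ exactly" clause is genuinely achieved rather than only up to $J_{<\lambda}$. This requires knowing the precise normalisation of the trivalent generators: the composite $V_k \to V_i\otimes V_j \to V_k$ is the scalar $\genfrac{[}{]}{0pt}{}{k}{i}$ times the identity (as recalled in the excerpt), \emph{not} the identity, so a naive merge–split ladder is off by that scalar. The fix is to rescale the elementary ladder moves by the appropriate quantum binomial coefficients (invertible by \cref{rem:any_semisimple}), choosing the scaling so that each "there and back" composite is exactly $\id$; one then checks the rescaled moves still satisfy the braid and commutation relations on the nose, possibly after absorbing further scalars. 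Verifying this bookkeeping of scalars is consistent — i.e. that a consistent global choice of normalisations exists — is the real content; once it is in place, the three compatible-family axioms follow formally as above. I would also note that the case $n=2$ (Temperley–Lieb) is degenerate: every weight has a \emph{unique} object, so $\varphi^\lambda = \{\id\}$ trivially, which is a useful sanity check on the construction.
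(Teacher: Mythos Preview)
There is a genuine gap in your elementary swap morphism. You propose realising the adjacent transposition $\cdots ab\cdots \to \cdots ba\cdots$ by merging $V_a\otimes V_b \to V_{a+b}$ and then splitting $V_{a+b}\to V_b\otimes V_a$, and you assert that the intermediate object ``has the same weight $\lambda$''. This is false: replacing the pair $(a,b)$ by the single letter $a+b$ changes the weight from $\lambda$ to $\lambda - \varpi_a - \varpi_b + \varpi_{a+b}$, and since $\varpi_a+\varpi_b \succ \varpi_{a+b}$ strictly in dominance (both are partitions of $a+b$, one with parts $\le 2$, the other with all parts $1$), your elementary ladder factors through a strictly lower weight and hence lies in $J_{<\lambda}$. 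Every $\varphi_{\underline{x},\underline{y}}$ you build from these would be zero modulo $J_{<\lambda}$, so the compatible-family axioms fail immediately. (A secondary symptom: when $a+b\ge n$ your intermediate strand does not even exist.) The normalisation problem you flag with $\gaussianquant{k}{i}$ is not the real issue; the construction itself is the wrong one.

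The paper's construction is different in exactly this point: its ``simple morphism'' is a single ladder rung transferring label $|a-b|$ from one strand to the other, taking $(a,b)$ directly to $(b,a)$ with no single-strand intermediate, so it does not factor through anything of lower weight. Moreover the paper does not attempt to obtain $\varphi_{\underline{x},\underline{x}}=\id$ as a composite $\ell_{\underline{x}}^{-1}\circ\ell_{\underline{x}}$: it simply \emph{sets} $\varphi_{\underline{x},\underline{x}}=\id_{\underline{x}}$ by fiat, and only uses composites of rungs when $\underline{x}\neq\underline{y}$. This is legitimate because the composition axiom $\varphi_{\underline{x},\underline{y}}\circ\varphi_{\underline{z},\underline{x}}=\varphi_{\underline{z},\underline{y}}$ is only required to hold modulo $J_{<\lambda}$, and the rung morphisms satisfy the braid/square relations only modulo lower terms (as the paper's $n=3$ example makes explicit)---so your hope that these relations hold ``on the nose'' is also misplaced, and your route to the exact-identity clause would not have closed even with the correct elementary move.
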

These morphisms are known as (families of) \emph{neutral ladders}.
\begin{proof}
  We present the construction here and signpost the reader to the proof that it suffices.

  Set $\varphi_{\underline{x},\underline{x}} = \id_{\underline{x}}$.
  Now, suppose $\underline x$ and $\underline y$ differ in only two adjacent values, which is to say
  \begin{equation}
  \underline{x} = \underline{x_{i_1}\cdots x_{i_{t-1}}x_{i_t}x_{i_{t+1}}x_{i_{t+2}}\cdots x_{i_r}}
  \end{equation}
  and
  \begin{equation}
  \underline{y} = \underline{x_{i_1}\cdots x_{i_{t-1}}x_{i_{t+1}}x_{i_t}x_{i_{t+2}}\cdots x_{i_r}}
  \end{equation}
  Firstly, assume that  $x_{i_{t+1}} > x_{i_t}$.   
  Then we construct the morphism $\varphi_{\underline{x},\underline{y}}$, which we term a simple morphism, as follows: 
  \begin{equation}
  \varphi_{\underline{x},\underline{y}} =
  \vcenter{\hbox{
      \begin{tikzpicture}[scale=1]
        \draw[very thick] (0, 0) -- (0.5, 0);
        \draw[very thick] (1.0, 0.5) -- (0.5, 0);
        \draw[very thick] (1.0, 0.5) -- (1.5, 0.5);
        \draw[very thick] (0.5, 0) -- (0.75, -0.25);
        \draw[very thick] (1.5, -.25) -- (0.75,-0.25);
        \draw[very thick] (1.0, 0.5) -- (0.75, 0.75);
        \draw[very thick] (0.75, 0.75) -- (0, 0.75);
        
        \node at (-.4, 2) {\small $x_{i_r}$};
        \node at (-.4, 1.15) {\small $x_{i_{t+2}}$};
        \node at (-.4, 0.75) {\small $x_{i_t}$};
        \node at (-.4, 0) {\small $x_{i_{t+1}}$};
        \node at (-.4, -.65) {\small $x_{i_{t-1}}$};
        \node at (-.4, -1.5) {\small $x_{i_1}$};
        
        \node at (1.9, 2) {\small $x_{i_r}$};
        \node at (1.9, 1.15) {\small $x_{i_{t+2}}$};
        \node at (1.9, 0.5) {\small $x_{i_{t+1}}$};
        \node at (1.9, -.25) {\small $x_{i_t}$};
        \node at (1.9, -.65) {\small $x_{i_{t-1}}$};
        \node at (1.9, -1.5) {\small $x_{i_1}$};
        
        \draw[very thick] (0, -.65) -- (1.5, -.65);
        \draw[very thick] (0, 1.15) -- (1.5, 1.15);
        \draw[very thick] (0, 2.0) -- (1.5, 2.0);
        \draw[very thick] (0, -1.5) -- (1.5, -1.5);
        \node at (0.75, -1) {\tiny$\vdots$};
        \node at (0.75, 1.7) {\tiny$\vdots$};
      \end{tikzpicture}
    }}
  \end{equation}
  where the middle rung is labelled $x_{i_{t+1}} - x_{i_t}$.
  In case $x_{i_t}<x_{i_{t+1}}$, we set $\varphi_{\underline{x},\underline{y}}=\iota(\varphi_{\underline{y},\underline{x}})$, where $\iota $ is the anti-involution on the category as in \cref{def:iota}. 
  
  Then construct the family by composing these maps as appropriate. 
  These are known as {\itshape neutral ladders}~\cite[\S2.3]{elias_2016}. While there might be many ways to compose simple morphisms to obtain a morphism between two given objects, these are all equal modulo the ideal $J_{<\lambda}$ by ~\cite[Corollary 2.24]{elias_2016}.
\end{proof}


\begin{example}
  When $n = 2$, the weights and objects can both be identified with $\mathbb{N}$.  The dominance ordering is generated by $n \succ n-2$ and so odd numbers cannot be compared to even numbers.
  Since there are unique objects of a given weight, all neutral ladders are identities.
\end{example}
\begin{example}
  
  When $n = 3$, objects are simply elements of $\{\texttt{+}, \texttt{-}\}^*$ and neutral ladders are
  compositions of transpositions:
  \begin{equation} s_i =
    \vcenter{\hbox{
      \begin{tikzpicture}[scale=0.8,decoration = {markings,
              mark = at position 1 with {\arrow{latex}}
                }]
        \draw[very thick, postaction=decorate] (0, 0) -- (0.5, 0);
        \draw[very thick, postaction=decorate] (1.0, 0.5) -- (0.5, 0);
        \draw[very thick, postaction=decorate] (1.0, 0.5) -- (1.5, 0.5);
        \draw[very thick] (0.5, 0) -- (0.75, -0.25);
        \draw[very thick, postaction=decorate] (1.5, -.25) -- (0.75,-0.25);
        \draw[very thick] (1.0, 0.5) -- (0.75, 0.75);
        \draw[very thick, postaction=decorate] (0.75, 0.75) -- (0, 0.75);
        
        \node at (-.3, 0) {\tiny $i$};
        \node at (-.3, 0.75) {\tiny $i+1$};
        \node at (-.3, 2) {\tiny $\ell$};
        \node at (-.3, -1.5) {\tiny $1$};
        
        \draw[very thick] (0, -.65) -- (1.5, -.65);
        \draw[very thick] (0, 1.15) -- (1.5, 1.15);
        \draw[very thick] (0, 2.0) -- (1.5, 2.0);
        \draw[very thick] (0, -1.5) -- (1.5, -1.5);
        \node at (0.75, -1) {\tiny$\vdots$};
        \node at (0.75, 1.7) {\tiny$\vdots$};
      \end{tikzpicture}
    }}
  \end{equation}
  The relations on the category ensure that the braid relations are held by the $s_i$, up to terms factoring through objects of lower weight.
\end{example}

The second critical family is that of \emph{clasps}. For the rest of the paper, fix a family of neutral ladders $\varphi^\lambda$ for each weight $\lambda$.

\begin{theorem}\label{thm:jw_exists_unique}
  If $\underline{x}$ and $\underline{y}$ are both of weight $\lambda$, then there is a unique non-zero morphism $\JW_{\underline{x},\underline{y}}:\underline{x} \to \underline{y}$ which is killed by all left (or right) compositions with morphisms that factor through objects of weight less than $\lambda$.
  Further, $\varphi_{\underline{x},\underline{y}}=\JW_{\underline{x},\underline{y}}$ modulo $J_{<\lambda}$.
\end{theorem}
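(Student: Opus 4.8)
The plan is to bootstrap everything from the diagonal case. When $\underline x=\underline y$ the asserted morphism is the clasp idempotent $\JW_{\underline x}:=\JW_{\underline x,\underline x}\in\End\underline x$, whose existence and uniqueness, together with the fact that the coefficient of $\id_{\underline x}$ in it is $1$, are part of the material recalled from \cite[\S3]{elias_2015}; concretely $\JW_{\underline x}$ is the unique idempotent with $\JW_{\underline x}\equiv\id_{\underline x}\pmod{J_{<w}}$, and it satisfies $\JW_{\underline x}\cdot J_{<w}=J_{<w}\cdot\JW_{\underline x}=0$. For general $\underline x,\underline y$ of weight $w$ I would then set
\[
  \JW_{\underline x,\underline y}\ :=\ \JW_{\underline y}\circ\varphi_{\underline x,\underline y},
\]
with $\varphi_{\underline x,\underline y}$ the neutral ladder of \cref{lem:neutrals}, and verify the three assertions for this element (the symmetric formula $\varphi_{\underline x,\underline y}\circ\JW_{\underline x}$ produces the same morphism, by the uniqueness below).

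The single representation-theoretic input is that $\dim\bigl(\Hom(\underline x,\underline y)/J_{<w}\bigr)=1$. Working over a ring in which $q$ is not a root of unity, so that $\text{\textbf{Fund}}_n$ is semisimple (\cref{rem:any_semisimple}), observe that $w=\wt(\underline x)$ is the dominance-maximal weight of $V_{\underline x}=V_{x_1}\otimes\cdots\otimes V_{x_k}$ and that its $w$-weight space is one-dimensional, spanned by the tensor of the highest weight vectors; hence $V_{\underline x}\cong L_w\oplus\bigoplus_{\mu<w}L_\mu^{\oplus m_\mu}$ with the simple $L_w$ occurring exactly once, and likewise for $\underline y$. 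Since each $L_\mu$ with $\mu<w$ is a summand of an object of weight $\mu<w$, the ideal $J_{<w}$ is precisely the sum of the $\Hom$-blocks indexed by $\mu<w$, so $\Hom(\underline x,\underline y)/J_{<w}\cong\Hom(L_w,L_w)$ is one-dimensional. Uniqueness follows immediately: if $g\colon\underline x\to\underline y$ is annihilated by right-composition with $J_{<w}$ then $g=g\circ\JW_{\underline x}$ (because $\id_{\underline x}-\JW_{\underline x}\in J_{<w}$), and $\Hom(\underline x,\underline y)\circ\JW_{\underline x}$ is one-dimensional; symmetrically for left-composition, so each variant of the annihilation condition carves out a one-dimensional space of candidates. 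That $\JW_{\underline x,\underline y}$ is nonzero and killed on both sides is then direct: $\JW_{\underline y}\circ\varphi_{\underline x,\underline y}\equiv\varphi_{\underline x,\underline y}\not\equiv 0\pmod{J_{<w}}$; and for any $h$ factoring through weight $<w$ we have $\varphi_{\underline x,\underline y}\circ h\in J_{<w}$ and $h\circ\JW_{\underline y}\circ\varphi_{\underline x,\underline y}\in J_{<w}$ (for the latter, $\im\JW_{\underline y}$ has no summand of weight $\ge w$ other than $L_w$, which $h$ annihilates), while $\JW_{\underline y}$ kills $J_{<w}$ from the left.

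It remains to pin down the normalisation. Here I would invoke the ladder basis of $\Hom(\underline x,\underline y)$ recounted in \cref{sec:cells}: it is filtered by the cellular ideals, it contains the diagram $\varphi_{\underline x,\underline y}$, and $\varphi_{\underline x,\underline y}$ is its unique member lying outside $J_{<w}$ — the top weight $w$ contributes exactly one basis vector because the $\underline x$- and $\underline y$-graded pieces of the weight-$w$ cell module are one-dimensional, spanned by neutral ladders. Writing $\JW_{\underline y}=\id_{\underline y}+(\text{diagrams factoring through weight }<w)$ from Elias' recursion, we get $\JW_{\underline x,\underline y}=\varphi_{\underline x,\underline y}+(\text{diagrams factoring through weight }<w)$; re-expanding the lower terms in the ladder basis introduces only basis diagrams lying in $J_{<w}$, so the coefficient of $\varphi_{\underline x,\underline y}$ in $\JW_{\underline x,\underline y}$ is exactly $1$. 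Combined with the one-dimensionality of the candidate space, this singles out $\JW_{\underline x,\underline y}$ uniquely among the nonzero morphisms of the stated type. The step I expect to be the genuine obstacle is exactly this last one — reconciling the abstract "modulo $J_{<w}$" characterisation with the concrete statement about the coefficient of the diagram $\varphi_{\underline x,\underline y}$ — which is why one wants the cell-filtered ladder basis in hand; everything else reduces to semisimple representation theory together with the absorption identities for the diagonal clasps.
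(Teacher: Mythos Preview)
The paper does not supply a proof of this theorem; it is stated as background material, with the existence and properties of clasps deferred to \cite[\S3]{elias_2015}. Your argument is therefore not competing with anything in the paper, and it is correct: bootstrap from the diagonal clasp $\JW_{\underline x}$ (taken as input from Elias), use semisimplicity to see that $\Hom(\underline x,\underline y)/J_{<w}$ is one-dimensional, and then read off the normalisation from the cell-filtered ladder basis. This is the standard way one proves such a statement.

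Two small points. First, your verification that $\JW_{\underline x,\underline y}$ is killed on the left is slightly roundabout: you write $h\circ\JW_{\underline y}\circ\varphi_{\underline x,\underline y}\in J_{<w}$ and then appeal to a further absorption, but in fact $h\circ\JW_{\underline y}=0$ on the nose by the defining property $J_{<w}\circ\JW_{\underline y}=0$, so the composite vanishes immediately. Second, your forward reference to \cref{sec:cells} for the ladder basis is logically fine (the basis comes from \cite{elias_2015}, not from this paper), but if you want to avoid the appearance of circularity you can cite Elias directly for the fact that $\varphi_{\underline x,\underline y}$ is the unique ladder-basis element of $\Hom(\underline x,\underline y)$ not lying in $J_{<w}$.
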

When the objects $\underline{x}$ and $\underline{y}$ are clear or do not matter, we will simply refer to the clasps as $\JW_\lambda$.
Note that $\JW_{\underline x, \underline x}$ is an idempotent for each $\underline x$.
\begin{example}
  When $n =2$ these are the celebrated Jones-Wenzl idempotents.  
\end{example}

\begin{lemma}\label{lem:murderous_jw}
  If $f : \underline{x} \to \underline{y}$, then
  \begin{equation}
      \JW_{\underline{y}, \underline {y'}} \circ f \circ \JW_{\underline{x'}, \underline{x}}
      = \begin{cases}
      a_f\JW_{\underline{x'}, \underline{y'}}\underline&\wt(\underline x) = \wt(\underline y)\\
      0&\text{otherwise}
      \end{cases}
  \end{equation}
  for any $\underline{y'}$ of the same weight as $\underline{y}$, and $\underline{x'}$ of the same weight as $\underline{x}$, where $f = a_f \varphi_{\underline{x},\underline{y}}$ modulo $J_{<\lambda}$.
\end{lemma}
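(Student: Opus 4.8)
The plan is to split along the stated case distinction, using only two ingredients: the characterisation of clasps from the preceding theorem (existence, uniqueness, and the normalisation of the $\varphi$-coefficient), and the cellular (triangular) structure of $\text{\textbf{Sp}}_n$ recalled in \cref{sec:webs} — equivalently, the ladder basis of \cite{elias_2015} — which writes every morphism $\underline x\to\underline y$ as a combination of morphisms factoring through an object of some weight $\mu$ with $\mu\preceq\wt\underline x$ and $\mu\preceq\wt\underline y$, the unique ``top'' contribution with $\mu=\wt\underline x=\wt\underline y$ (present only when $\wt\underline x=\wt\underline y$) being a scalar multiple of the neutral ladder $\varphi_{\underline x,\underline y}$; that scalar is by definition the coefficient of $\varphi_{\underline x,\underline y}$ in $f$. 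In particular, modulo $J_{<w}$ the space $\Hom(\underline x,\underline y)$ between two objects of the same weight $w$ is spanned by the image of $\varphi_{\underline x,\underline y}$, so $\JW_{\underline x,\underline y}\equiv\varphi_{\underline x,\underline y}\pmod{J_{<w}}$.

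For the vanishing case, assume $\wt\underline x\neq\wt\underline y$ and expand $f$ as above. For each contributing morphism $g$, factoring through weight $\mu\preceq\wt\underline x$ and $\mu\preceq\wt\underline y$, the two boundary weights cannot both equal $\mu$, so $\mu\prec\wt\underline x$ or $\mu\prec\wt\underline y$; hence $g\in J_{<\wt\underline x}$ or $g\in J_{<\wt\underline y}$. Since $\JW_{\underline y',\underline y}$ annihilates $J_{<\wt\underline y}$ under composition on its side, and (the variance is irrelevant here, as a clasp kills lower-weight-factoring morphisms on both sides) $\JW_{\underline x,\underline x'}$ annihilates $J_{<\wt\underline x}$ likewise, each sandwich $\JW_{\underline y',\underline y}\circ g\circ\JW_{\underline x,\underline x'}$ is zero, and therefore so is the whole expression.

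For the equal-weight case $\wt\underline x=\wt\underline y=w$, write $f=c\,\varphi_{\underline x,\underline y}+r$ with $r\in J_{<w}$ and $c$ the coefficient of $\varphi_{\underline x,\underline y}$ in $f$; then $\JW_{\underline y',\underline y}\circ r=0$ since $r$ factors through weight $<w=\wt\underline y$, so the expression equals $c\cdot\bigl(\JW_{\underline y',\underline y}\circ\varphi_{\underline x,\underline y}\circ\JW_{\underline x,\underline x'}\bigr)$. The bracketed composite is a morphism between $\underline y'$ and $\underline x'$ (both of weight $w$) annihilated on both sides by morphisms factoring through weight $<w$ — a clasp sits at each end — so by the uniqueness clause of the theorem it is a scalar multiple of $\JW_{\underline y',\underline x'}$. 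That the scalar is $1$ follows because $\JW_{\underline y',\underline y}\equiv\varphi_{\underline y',\underline y}$ and $\JW_{\underline x,\underline x'}\equiv\varphi_{\underline x,\underline x'}$ modulo $J_{<w}$, whence repeated application of the compatible-family axiom for neutral ladders gives the composite $\equiv\varphi_{\underline y',\underline x'}\pmod{J_{<w}}$; comparing with $\JW_{\underline y',\underline x'}\equiv\varphi_{\underline y',\underline x'}$ pins the scalar to $1$. Thus the expression is $c\,\JW_{\underline y',\underline x'}$, as required.

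The only step requiring genuine care is the vanishing case: one must know that any morphism between objects of distinct weight is, after passing to the ladder basis, swallowed by one of the two flanking clasps, i.e.\ that the through-weight of each basis morphism is bounded above by both boundary weights. This triangularity is precisely the cellular structure of $\text{\textbf{Sp}}_n$ and is not re-proved here; once it is in hand, everything else is formal manipulation with the compatible-family axioms and the uniqueness of clasps, and identifying the scalar in the equal-weight case is then pure bookkeeping.
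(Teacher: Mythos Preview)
The paper states this lemma without proof, so there is nothing to compare against directly. Your argument is correct and supplies exactly what the paper omits: you use the cellular (ladder) triangularity to reduce an arbitrary $f$ to a scalar multiple of a neutral ladder modulo $J_{<w}$, invoke the two-sided killing property of clasps to dispose of the lower terms (and of the whole expression when the weights differ), and then pin down the scalar via uniqueness and the normalisation $\JW\equiv\varphi\pmod{J_{<w}}$ together with the compatible-family axiom $\varphi\circ\varphi\equiv\varphi$. This is the natural proof and is implicit in the paper's set-up; nothing further is needed.
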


\begin{corollary} \label{cor:murderous_jw}
  Any morphism factoring through both a clasp of weight $\mu$ and a clasp of weight $\lambda$ vanishes unless $\mu = \lambda$, in which case it is a multiple of a clasp.
\end{corollary}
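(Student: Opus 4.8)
The plan is to reduce to the decomposition of the category into isotypic parts, which is available here because every clasp exists and hence (as in \cref{rem:any_semisimple}) the category is semisimple. First I would pin down what ``factoring through a clasp of weight $\mu$'' gives us concretely. If $\psi = g\circ\JW_{\underline c,\underline d}\circ h$ with $\wt\underline c=\wt\underline d=\mu$, then --- using that the neutral-ladder coefficient of a clasp is $1$, so that \cref{lem:murderous_jw} makes a clasp absorb the idempotent clasps on its two legs --- I may assume $h$ already satisfies $h=\JW_{\underline c,\underline c}\circ h$ and $g=g\circ\JW_{\underline d,\underline d}$; consequently $\psi$ factors through the image of the idempotent $\JW_{\underline c,\underline c}$, which in the semisimple category is (a sum of copies of) the indecomposable object $L_\mu$ of highest weight $\mu$. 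Thus the hypothesis says exactly that $\psi$ factors through $L_\mu$; likewise, factoring through an object of weight $\lambda$ --- routed through its clasp idempotent --- makes $\psi$ factor through $L_\lambda$.

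Now I would compare images. From $\psi=(\text{a map into }L_\lambda)$ followed by $(\text{a map out of }L_\lambda)$, the image of $\psi$ lies inside the $L_\lambda$-isotypic summand of its target; from the $L_\mu$-factorisation it lies inside the $L_\mu$-isotypic summand. If $\mu\neq\lambda$ these are complementary summands of the target, so their intersection is zero and $\psi=0$. If $\mu=\lambda$ there is nothing to prove about vanishing; instead one notes that an object of weight $\mu$ contains $L_\mu$ with multiplicity one (it is the top weight), so $\End L_\mu$ is just the ground ring and $\psi$, caught between two such clasps, is determined by a single scalar on the $L_\mu$-line --- and \cref{lem:murderous_jw} identifies this datum with a scalar multiple of the canonical clasp between the source and target of $\psi$, which is the claim.

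The step I expect to require the most care is the identification of the image of a clasp of weight $\mu$ with $L_\mu$ and, relatedly, the multiplicity-one fact used in the $\mu=\lambda$ case: both are really statements about how the clasp idempotents cut the objects of $\text{\textbf{Sp}}_n$ into indecomposables, and they must be phrased so as to remain valid over the pointed rings of \cref{rem:any_semisimple} rather than only over a field. One should also record, for the $\mu=\lambda$ conclusion to be read literally, that source and target then have the same weight (which is what happens in the situations where the corollary is used). Once that dictionary is in place the rest is immediate: the isotypic-component argument for $\mu\neq\lambda$, and a direct appeal to \cref{lem:murderous_jw} for $\mu=\lambda$.
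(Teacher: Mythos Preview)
The paper gives no proof; the corollary is meant to be immediate from \cref{lem:murderous_jw}. Your isotypic-component argument is a different route, but it has a gap at the step ``factoring through an object of weight $\lambda$ --- routed through its clasp idempotent --- makes $\psi$ factor through $L_\lambda$.'' Inserting $\id_{\underline z}=\JW_{\underline z}+(\id_{\underline z}-\JW_{\underline z})$ only decomposes $\psi=\psi_1+\psi_2$ with $\psi_1$ through $L_\lambda$ and $\psi_2$ through $J_{<\lambda}$; you have not shown $\psi_2=0$. When $\mu<\lambda$ in the dominance order, the $L_\mu$-isotypic content of $\psi$ can live entirely in $\psi_2$, so your comparison of isotypics yields only $\psi_1=0$, not $\psi=0$. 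Concretely in $\TLcat$: the morphism $\delta\cdot\id_\emptyset=\mathrm{cup}\circ\mathrm{cap}$ factors through both the clasp $\JW_0$ (weight $0$) and the object $\underline{\bullet\bullet}$ (weight $2$), yet is nonzero.

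Your argument does give vanishing for $\mu\nleq\lambda$, and the $\mu=\lambda$ case is handled correctly. The problematic case $\mu<\lambda$ is exactly where the corollary, read literally, appears to fail; the paper is presumably reading ``factoring through both'' as a single nested factorisation with the object of weight $\lambda$ adjacent to the clasp and with $\lambda\le\mu$ implicit, so that the defining property of $\JW_\mu$ (killed by composition with $J_{<\mu}$) together with \cref{lem:murderous_jw} gives the result in one line, with no appeal to semisimplicity.
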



Finally, we will introduce tiers and elementary light ladders.
Recall that if $\varpi_\ell$ is a fundamental weight, then there are $\binom n \ell$ weights appearing in the fundamental representation $V_\ell$.
To each of these weights $\mu$, we associate an \emph{elementary light ladder} $E_\mu$, as will be shown explicitly in \cref{eq:e_mu}. This morphism, when tensored with the identity and pre- or post-composed by neutral ladders, gives morphisms from objects of weight $\lambda + \mu$ to $\lambda + \varpi_i$ called \emph{tiers}.
If $\lambda+\mu$ is not dominant, the tier associated to $\mu$ is not defined for that $\lambda$.

The precise definition appears in~\cite[\S 2.5]{elias_2015} and we will give a brief overview in the next section. Note that our definition of tiers and light ladders are flipped right-to-left from Elias'. That is, what we call $E_\mu$ is called $\iota E_\mu$ in \cite{elias_2015} and vice-versa.

\begin{remark}
  Though we do not need to deal explicitly with the elementary light ladders in this paper, we will note later that they are the images of the monomial basis under the skew quantum Howe duality at play.
\end{remark}
\subsection{Partitions and Computation of Intersection forms}
The computation of the clasps for $A_n$ webs hinges on the calculation of so-called intersection forms.
\begin{definition}\label{def:inter_form}
  Let $\lambda \in \Lambda$ and suppose that $a$ is a fundamental dominant weight.  Let $\Omega(a)$ be the set of weights appearing in the representation $V_a$. Then the intersection form $\kappa_\lambda^{\lambda+ \mu}$ for $\mu \in \Omega(a)$ is the coefficient of the identity diagram in
  \begin{equation}
    \vcenter{\hbox{
    \begin{tikzpicture}
      \draw[very thick, fill=purple!30!white] (-.5,-1) rectangle (.5,1);
      \draw[very thick] (.5,-1.4) rectangle (1.2,0.1);
      \draw[very thick] (-.5,-1.4) rectangle (-1.2,0.1);
      \draw[line width=2pt] (-.5, -1.2) -- (0.5, -1.2);
      
      \draw[line width=2pt] (1.2, -1.2) -- (1.4, -1.2);
      \draw[line width=2pt] (1.2, -1) -- (1.4, -1);
      \draw[line width=2pt] (1.2, -0.8) -- (1.4, -0.8);
      \draw[line width=2pt] (1.2, -0.6) -- (1.4, -0.6);
      \draw[line width=2pt] (1.2, -0.4) -- (1.4, -0.4);
      \draw[line width=2pt] (1.2, -0.2) -- (1.4, -0.2);
      \draw[line width=2pt] (1.2, -0) -- (1.4, -0);
      \draw[line width=2pt] (0.5, 0.2) -- (1.4, 0.2);
      \draw[line width=2pt] (0.5, 0.4) -- (1.4, 0.4);
      \draw[line width=2pt] (0.5, 0.6) -- (1.4, 0.6);
      \draw[line width=2pt] (0.5, 0.8) -- (1.4, 0.8);

      \draw[line width=2pt] (-1.2, -1.2) -- (-1.4, -1.2);
      \draw[line width=2pt] (-1.2, -1) -- (-1.4, -1);
      \draw[line width=2pt] (-1.2, -0.8) -- (-1.4, -0.8);
      \draw[line width=2pt] (-1.2, -0.6) -- (-1.4, -0.6);
      \draw[line width=2pt] (-1.2, -0.4) -- (-1.4, -0.4);
      \draw[line width=2pt] (-1.2, -0.2) -- (-1.4, -0.2);
      \draw[line width=2pt] (-1.2, -0) -- (-1.4, -0);
      \draw[line width=2pt] (-0.5, 0.2) -- (-1.4, 0.2);
      \draw[line width=2pt] (-0.5, 0.4) -- (-1.4, 0.4);
      \draw[line width=2pt] (-0.5, 0.6) -- (-1.4, 0.6);
      \draw[line width=2pt] (-0.5, 0.8) -- (-1.4, 0.8);

      \node at (0, 0) {$\lambda$};
      \node at (0.85, -.65) {$E_\mu$};
      \node at (-0.85, -.65) {$\iota E_\mu$};
      \node at (0, -1.4) {\small$a$};
    \end{tikzpicture}}}.
  \end{equation}
  Note this is a morphism from an object of weight $\lambda + \mu$ to itself and so the intersection form is only defined when $\lambda + \mu$ is dominant.
\end{definition}
In~\cite{elias_2015} these constants are denoted by $\kappa_{\lambda,\mu}$.
\begin{remark}
Note that the intersection form $\kappa_\lambda^{\lambda+\mu}$ requires us to know the clasp on $\lambda$ to calculate. However, in~\cite{elias_2015}, the clasp on $\lambda$ is given in terms of intersection forms, much like the form found for generalised Jones-Wenzl elements in~\cite{martin_spencer_2021}.

This is not circular logic, however, but part of the inductive definition of the clasp elements. Indeed, to define the clasp on $\lambda$ we need only determine  all intersection forms $\kappa_{\lambda-\mu}^\lambda$.
\end{remark}

There is a conjectured form of these intersection forms in \cite[Conjecture 3.16]{elias_2015}.  This conjecture is known to hold for $n \le 4$.
In \cref{sec:howe} we will prove this conjecture for all $n$.
\begin{conjecture}[Elias]
\label{conj:elias}
Let $\Phi^+$ and $\Phi^-$ be the sets of positive and negative roots respectively, $\rho$ the half sum of the positive roots, and let $\lambda$ be a dominant weight. Suppose that $a$ is a fundamental dominant weight and $\mu \in \Omega(a)$. Then if $\lambda + \mu$ is also dominant,
\begin{equation}
    \kappa_{\lambda}^{\lambda+\mu} = \prod_{\alpha \in \Phi(\mu)} \frac{[\langle\lambda+\rho, \alpha\rangle]}{[\langle\lambda+\rho, \alpha\rangle-1]}
\end{equation}
where $\Phi(\mu) = \{\alpha \in \Phi^+ \;:\; w_\mu^{-1}(\alpha) \in \Phi^-\}$ and $w_\mu$ is the minimal element of $S_n$ taking $\mu$ to a dominant weight.
\end{conjecture}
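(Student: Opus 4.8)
The plan is to prove \cref{conj:elias} by transporting the problem across the quantum skew Howe duality discussed in \cref{sec:howe}. The first step is to set up the dictionary precisely: the relevant morphism spaces of $\text{\textbf{Sp}}_n$ — and in particular the cell modules with their cellular forms — are identified with weight spaces of tensor products of $q$-exterior powers of the vector representation of $U_q(\Gl_m)$, equipped with the contravariant (Shapovalov-type) bilinear form; under this identification the ladder basis is carried to the monomial basis, i.e. to vectors obtained by applying monomials in the Chevalley generators to a highest weight vector, and — crucially — the clasp $\JW_\lambda$ is carried to the idempotent projecting onto the highest weight line of the corresponding $U_q(\Gl_m)$-summand. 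That this projector is the right image of $\JW_\lambda$ follows from the characterisation of clasps via \cref{lem:murderous_jw} (uniqueness of the morphism killed by everything factoring through lower weights), since the highest-weight projector has exactly the analogous annihilation property on the Howe-dual side; neutral ladders (\cref{lem:neutrals}) go to the canonical identifications of equal-weight spaces.

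Granting the dictionary, the second step is to reinterpret the intersection form. By definition $\kappa_\lambda^{\lambda+\mu}$ is the coefficient of the identity diagram in a picture of the shape $E_\mu\circ\JW_\lambda\circ E_\mu^{*}$, which under Howe duality becomes a ratio of contravariant norms of explicit weight vectors: the squared length of the image of the elementary light ladder $E_\mu$ applied to the relevant highest weight vector, normalised by the norm at weight $\lambda+\mu$. Here the minusculity of $V_a$ does the heavy lifting: because $\mu\in\Omega(a)$ lies in a single Weyl orbit and all relevant $\mathfrak{sl}_2$-strings have length $2$, the light-ladder path from $\varpi_a$ to $\mu$ consists of elementary one-root steps and the corresponding operator is a product of $\ell(w_\mu)=|\Phi(\mu)|$ single Chevalley generators, with no divided powers. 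Computing the norm by commuting $E_i$ past $F_i$ via $E_iF_i = F_iE_i + (K_i-K_i^{-1})/(q-q^{-1})$, each step contributes one ratio of consecutive quantum integers; the standard bijection between the steps of a minuscule path and an inversion set reindexes these factors as $\prod_{\alpha\in\Phi(\mu)}[\langle\lambda+\rho,\alpha\rangle]/[\langle\lambda+\rho,\alpha\rangle-1]$, which is exactly the formula in \cref{conj:elias}. An alternative route to the same conclusion, which sidesteps a head-on norm computation, is to match the recursion for the cell Gram determinant obtained in \cref{sec:determinants} (a monomial in the $\kappa$'s) against the known product formula for the norms of a Gelfand–Tsetlin–type orthogonal basis of the weight space — the previously-known basis of $U_q(\Gl_m)$ that is the pre-image of the orthogonal cell basis — and then solve for the individual $\kappa_\lambda^{\lambda+\mu}$ by induction on the dominance order.

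I expect the genuine obstacle to be the first step: pinning down normalisations so that everything matches on the nose. One must track orientations and the normalisation of trivalent vertices carefully enough that "coefficient of the identity diagram" becomes literally the norm ratio rather than its reciprocal or a $q$-shifted variant; one must confirm that the cellular pairing is intertwined with the contravariant form up to a global scalar that cancels in the ratio; and one must reconcile the diagrammatic combinatorics of light ladders — choices of reduced words, independence modulo $J_{<\lambda}$ — with the Weyl-group combinatorics of $w_\mu$ and $\Phi(\mu)$. Once the dictionary is secured, the remaining $\mathfrak{sl}_2$ computation is routine and \cref{conj:elias} follows uniformly in $n$, recovering the known cases $n\le 4$ as instances.
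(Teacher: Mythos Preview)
Your overall strategy is precisely the paper's: transport through skew Howe duality, identify clasps with extremal projectors on the $U_q(\Gl_m)$ side, realise the orthogonal cell basis $\mathbb{L}_t$ of \cref{rem:define_our_basis} as the un-normalised Gelfand--Tsetlin basis, and read off $\kappa$ as the square of the GT normalisation constant. Your ``alternative route'' is exactly what the paper executes: it invokes Tolstoy's closed formula for the normalisations $\mathcal{N}(T^{(j-1)};T^{(j)})$ and then proves, by a page of telescoping quantum-factorial manipulations (\cref{lem:long}), that $\mathcal{N}^2$ agrees with Elias' product over $\Phi(\mu)$ --- after a transpose of partitions, since the GT side indexes by column-semistandard tableaux while the web side uses row-semistandard ones. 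That algebraic identity is the genuine technical content, and your sketch treats it as a black box.

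Your primary route (a), by contrast, is over-optimistic. Minusculity of $V_a$ does ensure that $E_\mu$ corresponds under Howe duality to a product of $|\Phi(\mu)|$ single Chevalley generators, but the norm you must compute is that of $P\cdot F_{i_k}\cdots F_{i_1} v$, where $P$ is the extremal projector for the smaller subalgebra --- not of the bare $F_{i_k}\cdots F_{i_1} v$. After the first $F_{i_1}$ the vector is no longer killed by $E_{i_1}$, so the commutator $[E_i,F_i]$ does not telescope into one clean ratio per step; the projector is doing real work, and computing the norm in its presence \emph{is} Tolstoy's calculation. Thus (a) does not furnish a shortcut: carried out honestly it collapses into (b), and the heart of the matter remains the explicit matching of $\mathcal{N}^2$ with the conjectured $\kappa$, i.e.\ \cref{lem:long}.
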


To compute these conjectured intersection forms it will help to introduce some notation.
Let $\varpi_j$ denote the $j$-th fundamental weight so that $V_j$ has highest weight $\varpi_j$.
We identify $\varpi_j$ with the $n$-part partition (composition) of $n$ given by $(1^j0^{n-j})$.
For brevity in equations, we may denote this similarly to the below, for example.
\begin{equation}
    \varpi_4 = \colwei{(1,1,1,1,0,0,0)}
\end{equation}
Note that the highest weight of $V_{\underline{i}}$ is $\varpi_{i_1} + \cdots \varpi_{i_s}$, which is a partition of $\sum_s i_s$ into $n$ parts.

In this notation and basis, the roots of type $A_n$ are of the form $\alpha_{j,k} = -\varpi_{j-1}+\varpi_{j}+\varpi_{k-1} - \varpi_{k}$ for distinct $j$ and $k$ or, more succinctly,
\begin{equation}
    \alpha_{j, k} = \begin{cases}
    \vspace{0.2em}\colroot{(0,1,0,0,-1,0,0)}&j < k\\
    \colroot{(0,-1,0,0,1,0,0)}&  j > k
    \end{cases}
\end{equation}
where there is a positive box in the $j$-th row and a negative in the $k$-th.
We use red to visually distinguish roots from weights.
The inner product is the usual inner product on $n$-part tuples.

Now, given a weight $\mu$ (a composition in $n$ parts) appearing in some fundamental representation $V_\ell$, observe that it consists of a sequence of $1$s and $0$s. It is not hard to see then that $\Phi(\mu)$ contains all $\alpha_{j,k}$ where $j<k$ and $(\mu_j, \mu_k) = (0,1)$.

\begin{example}
We calculate some of the sets $\Phi$ for strategically chosen weights to demonstrate this calculation. Here, $n = 5$ so all compositions are in five parts.
\begin{center}
\begin{minipage}{0.30\textwidth}
\begin{align*}
\Phi\left(\colwei{(1,0,0,0,0)}\right) &= \{\}\\
\Phi\left(\colwei{(0,1,0,0,0)}\right) &= \left\{\colroot{(1,-1,0,0,0)}\right\}\\
\Phi\left(\colwei{(0,0,1,0,0)}\right) &= \left\{\colroot{(0,1,-1,0,0)},\colroot{(1,0,-1,0,0)}\right\}\\
\Phi\left(\colwei{(0,0,0,1,0)}\right) &= \left\{\colroot{(0,0,1,-1,0)},\colroot{(0,1,0,-1,0)},\colroot{(1,0,0,-1,0)}\right\}
\end{align*}
\end{minipage}
\begin{minipage}{0.30\textwidth}
\begin{align*}
\Phi\left(\colwei{(1,1,1,0,0)}\right) &= \{\}\\
\Phi\left(\colwei{(1,1,0,1,0)}\right) &= \left\{\colroot{(0,0,1,-1,0)}\right\}\\
\Phi\left(\colwei{(1,0,1,1,0)}\right) &= \left\{\colroot{(0,1,-1,0,0)}, \colroot{(0,1,0,-1,0)}\right\}
\end{align*}
\end{minipage}
\begin{minipage}{0.30\textwidth}
\begin{align*}
\Phi\left(\colwei{(1,1,0,0,0)}\right) &= \{\}\\
\Phi\left(\colwei{(1,0,0,1,0)}\right) &= \left\{\colroot{(0,0,1,-1,0)}, \colroot{(0,1,0,-1,0)}\right\}\\
\Phi\left(\colwei{(0,1,1,0,0)}\right) &= \left\{\colroot{(1,-1,0,0,0)}, \colroot{(1,0,-1,0,0)}\right\}\\
\Phi\left(\colwei{(0,1,0,1,0)}\right) &= \left\{\colroot{(1,-1,0,0,0)}, \colroot{(1,0,0,-1,0)}, \colroot{(0,0,1,-1,0)}\right\}
\end{align*}
\end{minipage}
\end{center}
\end{example}

It is immediate that if $\mu$ is dominant (i.e. a partition of $\ell$ into at most $n$ parts) the set is empty, so 
that $\mu=w_\ell$, and $E_{\mu}$ is the identity map.  Consequently one expects
$\kappa_{\lambda}^{\lambda + \mu} = 1$.
This corresponds to adding a single strand labelled $\mu$ in the construction of the clasps and so it is expected that the intersection form is trivial.

With this notation of weights $\mu$ as sequences of 1s and 0s, we can take a brief aside to define the maps $E_\mu$ explicitly as per \cite[Eq. 2.17]{elias_2015}. They are of the form
\begin{equation}\label{eq:e_mu}
  \vcenter{\hbox{\begin{tikzpicture}[scale=0.5]
    \draw[very thick] (0, 4) -- (3, 4);
    \draw[very thick] (0, 3) -- (3, 3);
    \draw[very thick] (0, 2) -- (3, 2);
    \draw[very thick] (0, 1) -- (3, 1);
    \draw[very thick] (0, 0) -- (3, 0);
    \draw[very thick] (1, 4) -- (2, 3);
    \draw[very thick] (1, 3) -- (2, 2);
    \draw[very thick] (1, 2) -- (2, 1);
    \draw[very thick] (1, 1) -- (2, 0);
    \node at (-.6, 4) {$x_1$};
    \node at (-.6, 3) {$x_2$};
    \node at (-.6, 2.2) {$\vdots$};
    \node at (-.6, 1) {$x_k$};
    \node at (-.6, 0) {$\ell$};

    \node at (3.6, 4) {$y_1$};
    \node at (3.6, 3) {$y_2$};
    \node at (3.6, 2.2) {$\vdots$};
    \node at (3.6, 1) {$y_k$};
    \node at (3.7, 0) {$y_{k+1}$};
  \end{tikzpicture}}}
\end{equation}
where $0 \le y_1 < x_1<y_2<x_2<\cdots<x_k<y_{k+1}\le n$ are derived as follows. Split the sequence of 1s and 0s into maximal consecutive subsequences of only 1s and only 0s. Then $y_i$ is the last index in the $i$-th $1$-string (where $y_1$ is zero if the whole sequence starts with a 0) and $x_i$ is the last index in the $i$-th $0$-string. Note that $k$ is the number of $0$-strings excluding the final one, and there are $k+1$ horizontal strands.

Next let us give some combinatorial flavour to the inner products appearing in \cref{conj:elias}.  We have that $\langle \lambda + \rho, \alpha_{i,j}\rangle = \lambda_i - \lambda_j + \rho_i - \rho_j = \lambda_i - \lambda_j -i + j$. This value is known by another name:
\begin{definition}\label{def:axial}
  The \emph{axial distance} between $j$ and $k$ in partition $\lambda$ is
  \begin{equation}
      c_{jk}^\lambda = \lambda_j - \lambda_k + k - j.
  \end{equation}
\end{definition}

These axial distances occur frequently in the literature of the representation theory of $\mathrm{GL}_n$ and related algebras. They can be interpreted as the differences between \emph{contents} in the last boxes of row $j$ and $k$, where the contents of a box with coordinates $(x,y)$ is $x-y$.
Axial distances are also intimately related to the ratio of (quantum) hook lengths\cite{kosuda_1990}. Note that, for example, $c_{jk}^\lambda + c_{k\ell}^\lambda = c_{j\ell}^\lambda$ and that $c_{jk}^\lambda > 0$ if $k > j$.

\begin{example}\label{eg:all_kappa}
We continue our computation of strategically chosen constants and present some conjectured intersection forms.

The easiest to exhibit are those with empty $\Phi$ sets:
\begin{equation*}
  \kappa_{\,\tinyyng{(1,1,1)}}^{\,\tinyyng{(2,2,1)}}\;=
  \;
  \kappa_{\,\tinyyng{(2,1,1,1)}}^{\,\tinyyng{(3,2,2,1)}}\;=
  \;
  \kappa_{\,\tinyyng{(2,2,1)}}^{\,\tinyyng{(3,3,2)}}\;=
  \;
  \kappa_{\,\tinyyng{(3,3,2,1)}}^{\,\tinyyng{(4,4,2,1)}}\;=
  \;
  \kappa_{\,\tinyyng{(3,3,1,1)}}^{\,\tinyyng{(4,3,1,1)}}\;=
  \;
  \kappa_{\,\tinyyng{(3,3,2)}}^{\,\tinyyng{(4,3,2)}}\;=\;1.
\end{equation*}
In each of the above, the boxes added to the bottom partition to form the top partition (i.e. $\mu$) are themselves a partition (i.e. $\mu$ is dominant).

The rest is simply a matter of working through the calculations.
\begin{align*}
  \kappa_{\tinyyng{(1,1,1)}}^{\tinyyng{(2,1,1,1)}} =
  \kappa_{\tinyyng{(1,1,1)}}^{\colwei{(1,1,1,0,0)}+\colwei{(1,0,0,1,0)}}
  &=
  \egfrac{3}{4}{(1,1,1,0,0)}
  \egfrac{2}{4}{(1,1,1,0,0)}
  = \frac{[2]}{[1]}\frac{[3]}{[2]} = [3]\\
  \kappa_{\tinyyng{(2,2,1)}}^{\tinyyng{(3,3,1,1)}}
  =\kappa_{\tinyyng{(2,2,1)}}^{\colwei{(2,2,1,0,0)}+\colwei{(1,1,0,1,0)}}
  &=
  \egfrac{3}{4}{(2,2,1,0,0)} = [2]
\end{align*}
Note that these intersection forms evaluate to a quantum integer, or more loosely, a polynomial in $[2]$ instead of a rational function. However, this is not always the case:
\begin{align*}
\kappa_{\tinyyng{(2,2,1)}}^{\tinyyng{(3,2,2,1)}}&=
\egfrac{2}{3}{(2,2,1,0,0)}
\egfrac{2}{4}{(2,2,1,0,0)}
= \frac{[2]}{[1]}\frac{[4]}{[3]} = \frac{[2][4]}{[3]}.
\end{align*}
We finish this example with five further intersection forms that will be useful in later calculations.

\begin{align*}
\kappa_{\tinyyng{(4,3,1,1)}}^{\tinyyng{(4,4,2,1)}}&=
\egfrac{1}{2}{(4,3,1,1,0)}
\egfrac{1}{3}{(4,3,1,1,0)}
=\frac{[2][5]}{[4]}
\\
\kappa_{\tinyyng{(4,3,2)}}^{\tinyyng{(4,4,2,1)}}&=
\egfrac{1}{2}{(4,3,2,0,0)}
\egfrac{1}{4}{(4,3,2,0,0)}
\egfrac{3}{4}{(4,3,2,0,0)}=
\frac{[3][7]}{[6]}\\
\kappa_{\tinyyng{(3,2,2,1)}}^{\tinyyng{(3,3,2,1)}}&=
\egfrac{1}{2}{(3,2,2,1,0)}
=[2]
\\
\kappa_{\tinyyng{(3,3,1,1)}}^{\tinyyng{(3,3,2,1)}}&=
\egfrac{2}{3}{(3,3,1,1,0)}
\egfrac{1}{3}{(3,3,1,1,0)}
 = \frac{[4]}{[2]}
\\
\kappa_{\tinyyng{(3,3,2)}}^{\tinyyng{(3,3,2,1)}}&=
\egfrac{3}{4}{(3,3,2,0,0)}
\egfrac{2}{4}{(3,3,2,0,0)}
\egfrac{1}{4}{(3,3,2,0,0)}
= \frac{[3][6]}{[2][4]}
\end{align*}
\end{example}

We can take inspiration from the above to rephrase \cref{conj:elias} in terms of partitions:
\begin{conjecture}[Elias]
\label{conj:elias2}
Let $\lambda'\subset \lambda$ be partitions (dominant weights), such that $\lambda\setminus\lambda'$ has no parts larger than 1.  Then
\begin{equation}
    \kappa_{\lambda'}^{\lambda} = \prod_{\substack{j < k\\\lambda'_j = \lambda_j\\\lambda'_k = \lambda_k+1}} \frac{[c^\lambda_{jk}]}{[c^\lambda_{jk} - 1]}.
\end{equation}
\end{conjecture}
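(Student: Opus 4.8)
The plan is to reduce \cref{conj:elias2} to a computation of Shapovalov forms for $U_q(\Gl_m)$ via quantum skew Howe duality, using the Gram-determinant formulas of \cref{sec:determinants} as the bridge. The key point is that \cref{sec:determinants} writes every cell Gram determinant of $\textbf{Sp}_n$ as an explicit product of intersection forms: the ladder/light-ladder basis filters each cell module by weight, the orthogonal basis built from traces of \clasps{} diagonalises the cellular form, and the diagonal entries telescope into products of the $\kappa_{\lambda'}^{\lambda}$ along saturated chains in $\Lambda$. Because this relationship is unitriangular for the dominance order, it inverts: knowing all cell Gram determinants recovers every individual $\kappa_{\lambda'}^{\lambda}$ as a ratio of two of them. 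So it suffices to compute the cell Gram determinants by a second, independent route and check that the answer is the product $\prod_{i<j}[c_{ij}^\lambda]/[c_{ij}^\lambda-1]$.

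First I would install skew Howe duality: $U_q(\Gl_m)$ and $U_q(\Sl_n)$ act commutingly on $\bigwedge_q^N(\C^n\otimes\C^m)$, and for $m$ in the stable range the morphism spaces $\Hom_{\textbf{Sp}_n}(V_{\underline{i}},V_{\underline{j}})$ are identified with weight spaces of tensor products of fundamental $U_q(\Gl_m)$-modules. Under this dictionary the cell module of $\textbf{Sp}_n$ attached to a dominant weight $\lambda$ corresponds to a weight space of the irreducible $U_q(\Gl_m)$-module of highest weight $\lambda^{t}$, and the cellular pairing corresponds to the contravariant (Shapovalov) form on that module. The remark following the definition of elementary light ladders identifies the light-ladder basis of the cell module with the image of a monomial (PBW-type) basis of the weight space; so the task becomes the computation of the Shapovalov Gram determinant of a weight space of an irreducible $U_q(\Gl_m)$-module in its monomial basis.

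For that computation I would pass to a known orthogonal basis of $L_m(\lambda^{t})$ — the Gelfand--Tsetlin basis — whose vectors are pairwise orthogonal for the Shapovalov form and whose squared norms are classically known: along the branching chain $U_q(\Gl_1)\subset\cdots\subset U_q(\Gl_m)$ each norm is a product of ratios of quantum integers in precisely the axial distances $c_{ij}$. Since the transition matrix from the monomial basis to the Gelfand--Tsetlin basis is unitriangular, the two Gram determinants coincide; expanding both product formulas and matching factor by factor along a chain of dominant weights (using $c_{ij}^\lambda+c_{jk}^\lambda=c_{ik}^\lambda$ to pass between the two shapes of the formula) then isolates $\kappa_{\lambda'}^{\lambda}$ and identifies it with the product over pairs $i<j$ having no new box in row $i$ and a new box in row $j$, as claimed. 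One also checks that the $\kappa$'s, being defined intrinsically inside $\textbf{Sp}_n$, do not depend on the auxiliary $m$, which is immediate since enlarging $m$ embeds the relevant diagrams faithfully.

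The hard part will be the precise matching of the two distinguished bases and their normalisations across the duality: one must verify that the clasp-built orthogonal basis of \cref{sec:determinants} is carried to a rescaling of the Gelfand--Tsetlin basis — equivalently that $\JW_\lambda$ maps to the highest-weight projector — and track the fact that the coefficient of $\varphi_{\underline{x},\underline{y}}$ in $\JW_{\underline{x},\underline{y}}$ is $1$ through the identification, so that the Gram determinants agree on the nose and not merely up to an unknown global scalar. A secondary nuisance is the bookkeeping of the axial-distance products: showing that the telescoping of Gelfand--Tsetlin norms over a saturated chain of dominant weights reproduces exactly the indexing of \cref{conj:elias2} rather than some superficially different product.
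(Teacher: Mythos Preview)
Your proposal is correct and uses the same essential ingredients as the paper: skew Howe duality to transport the cellular form to the contravariant form on a $U_q(\Gl_m)$-module, the Gelfand--Tsetlin basis as the orthogonal basis on the other side, and the identification of the light-ladder basis with a monomial basis under $\Psi$.

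The paper takes a more direct route than the one you outline. Rather than computing Gram determinants on both sides and then inverting the recursion of \cref{eq:recursive-det} to extract individual $\kappa$'s, the paper matches basis vectors one at a time: it shows that Tolstoy's extremal projector $P_q(\Gl;j)$ is carried by $\Psi$ to the clasp $\JW$, so that the Gelfand--Tsetlin vector $|T\rangle$ is exactly $\mathbb{L}_T$ divided by the normalising constant $\prod_j \mathcal{N}(T^{(j-1)};T^{(j)})$. Since $|T\rangle$ is orthonormal and $\mathbb{L}_T$ has squared norm $\prod_j \kappa_{(T^{(j-1)})^\perp}^{(T^{(j)})^\perp}$ (this is the content of \cref{rem:define_our_basis}), one obtains $\kappa_{\lambda'^\perp}^{\lambda^\perp} = \mathcal{N}(T';T)^2$ for each step individually, with no inversion needed. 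The algebraic verification that $\mathcal{N}(T';T)^2$ equals the conjectured product over axial distances is then an explicit (if lengthy) manipulation of Tolstoy's formula. What you flag as ``the hard part'' --- the precise matching of the clasp basis with the Gelfand--Tsetlin basis --- is thus not a technical nuisance but the entire argument; the determinant layer you propose as a bridge is unnecessary, and the telescoping you anticipate in the ``secondary nuisance'' is exactly the content of the paper's \cref{lem:long}.
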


\section{Cell Modules}\label{sec:cells}
We will now show how to use semistandard tableaux to enumerate bases of cell modules. For general references on this method see \cite{littelmann_1995} and the appendix of \cite{green_1980}.

Let $\underline{x}$ and $\underline{y}$ be sequences and suppose (using the notation of \cref{subsec:weights}) that $\underline x \succ \underline y$. Then the \emph{cell module $S(\underline x, \underline y)$} is the vector space of all morphisms $\underline y \to \underline x$ modulo those that factor through objects of weight less than $\underline y$.

\begin{lemma}\label{lem:same_cells}
  If $\wt(\underline{y_1}) = \wt(\underline{y_2})$ then $S(\underline x, \underline{y_1}) \simeq S(\underline x, \underline{y_2})$ as $\text{\bf Fund}$-modules.
\end{lemma}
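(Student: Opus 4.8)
The plan is to exhibit an explicit isomorphism using the neutral ladders of \cref{lem:neutrals}, which form a compatible family with $\varphi_{\underline{x},\underline{x}} = \id_{\underline x}$ exactly. Fix objects $\underline{y_1}$, $\underline{y_2}$ of the same weight $\mu = \wt(\underline{y_1}) = \wt(\underline{y_2})$, and let $\varphi_{\underline{y_1},\underline{y_2}} : \underline{y_1} \to \underline{y_2}$ and $\varphi_{\underline{y_2},\underline{y_1}} : \underline{y_2} \to \underline{y_1}$ be the neutral ladder morphisms between them. Define maps
\begin{equation*}
  \Psi : S(\underline x, \underline{y_1}) \to S(\underline x, \underline{y_2}), \qquad f \mapsto \varphi_{\underline{y_1},\underline{y_2}} \circ f,
\end{equation*}
and $\Psi' : S(\underline x, \underline{y_2}) \to S(\underline x, \underline{y_1})$, $g \mapsto \varphi_{\underline{y_2},\underline{y_1}} \circ g$. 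These are well defined on the quotients: if $f$ factors through an object of weight less than $\mu$, then so does $\varphi_{\underline{y_1},\underline{y_2}} \circ f$, since post-composition cannot raise the minimal weight through which a morphism factors. They are manifestly right $\text{\bf Fund}$-module maps, because the module action on $S(\underline x, -)$ is by pre-composition on the $\underline x$ side, which commutes with post-composition by a fixed morphism.

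First I would check that $\Psi$ and $\Psi'$ are mutually inverse. By the compatible-family axioms, $\varphi_{\underline{y_2},\underline{y_1}} \circ \varphi_{\underline{y_1},\underline{y_2}} = \varphi_{\underline{y_1},\underline{y_1}} = \id_{\underline{y_1}}$ modulo $J_{<\mu}$; in fact for neutral ladders the equality $\varphi_{\underline{y_1},\underline{y_1}} = \id_{\underline{y_1}}$ holds exactly, but we only need it modulo $J_{<\mu}$ here. Therefore $\Psi' \circ \Psi (f) = \varphi_{\underline{y_2},\underline{y_1}} \circ \varphi_{\underline{y_1},\underline{y_2}} \circ f$ equals $f$ modulo the submodule of morphisms factoring through weight less than $\mu$, i.e.\ $\Psi' \circ \Psi = \id$ on $S(\underline x, \underline{y_1})$; symmetrically $\Psi \circ \Psi' = \id$. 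Hence $\Psi$ is an isomorphism of $\text{\bf Fund}$-modules, which is the claim.

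The only subtle point — and the place I would be most careful — is the well-definedness check: one must confirm that post-composing a morphism which factors through weight $<\mu$ by a neutral ladder (which lives "at weight $\mu$") still yields something factoring through weight $<\mu$, and dually that the neutral ladder composite landing back at $\underline{y_1}$ differs from $\id_{\underline{y_1}}$ by exactly such a term. Both follow from the definition of the ideal $J_{<\mu}$ and the compatible-family axioms, so no genuine obstacle arises; the argument is essentially formal once the neutral ladders are in hand. (One should also note $\underline x \succ \underline{y_1}$ iff $\underline x \succ \underline{y_2}$ since dominance of objects depends only on their weights, so both cell modules are defined simultaneously.)
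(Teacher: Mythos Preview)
Your proof is correct and follows essentially the same approach as the paper: post-compose with a neutral ladder $\varphi_{\underline{y_1},\underline{y_2}}$ and use the compatible-family axioms from \cref{lem:neutrals} to see that this has an inverse modulo $J_{<\mu}$. The paper's argument is stated in two sentences; you have simply spelled out the well-definedness and module-map checks that it leaves implicit.
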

\begin{proof}
  By \cref{lem:neutrals} there is a complete set of neutral ladders between all objects of weight $\wt(\underline{y_1})$.
  Due to the fact that these are a compatible family and so form inverses to eachother (up to $J_{<\wt(\underline y_1)}$ they form isomorphisms between the cell modules as vector spaces.
  Further, as right-composition on a module of morphisms on which $\text{\bf Fund}$ acts on the left, these are actually $\text{\bf Fund}$-morphisms.
\end{proof}

For this reason, we may refer to $S(\underline{x}, \lambda)$ where $\lambda$ is a weight.
Here, one can pick whichever object of weight $\lambda$ one prefers as the source for the morphisms in $S(\underline{x}, \lambda)$.

We recall that $\text{\bf Fund}$ is equipped with an anti-involution $\iota$.
In particular, this endows the cell modules with canonical forms $\langle - , -\rangle$ given by (for any pair $m_1,m_2 \in S(\underline x, \underline y)$),
\begin{align}\label{eq:usual_form}
     (\iota m_1)\circ m_2 &\equiv \langle m_1, m_2 \rangle \id_{\underline{y}} \\
\intertext{modulo $J_{<\wt(\underline y)}$. In the language of clasps, we can write this as}
    \JW \circ (\iota m_1)\circ m_2 \circ \JW &= \langle m_1, m_2 \rangle \JW.
\end{align}

\begin{remark}
  This form is the crux of the cellular category and underpins many of the structure constants that appear.
  For example, by comparing with \cref{def:inter_form}, we see that the intersection form $\kappa_\lambda^{\lambda+\mu}$ is simply the inner product
  \begin{equation}
  \Big\langle (\JW_\lambda\otimes\id_a) \circ (\id\otimes E_\mu),\;(\JW_\lambda\otimes\id_a) \circ (\id\otimes E_\mu) \Big\rangle
  \end{equation}
  in $S(\underline{xa}, \lambda + \mu)$ if $\underline{x}$ is the object we have chosen for $\JW_\lambda$.
  In fact, we could consider the \emph{intersection} form as the \emph{cellular} form restricted to the (in this case one-dimensional) space spanned by morphisms of the form $(\JW_\lambda\otimes\id_{a})\circ (\id\otimes E_{\mu})$.
\end{remark}

\begin{remark}
  When one has an integral lattice, as we do with the diagram basis we are about to develop, one can consider reductions mod $p$.
  In particular we construct $\mathfrak p = (p)$ and consider the modules $M / \mathfrak p M$.
  
  If there is a contravariant bilinear form with suitable properties, one achieves a \emph{Jantzen filtration}. 
  This is a filtration of the modular cell module into semisimple layers, each occuring as a reduction of the characteristic zero module $\mathfrak p^nM / \mathfrak p^{n+1}M$\cite[chapter 8]{jantzen_1987}.

  In our case, however, we will want to reduce modulo $p$ and some minimal polynomial for $\delta$, say $m_\delta$.
  This corresponds to the ordinary and quantum characteristics respectively.
  We thus have a maximal ideal $\mathfrak p = (p, m_\delta)$ in $\Z[\delta]$.
  
  However, we now no longer have a chain of nested ideals, but rather a set of ideals of the form $(p^i, m_\delta^j)$.
  This forms a lattice of ideals under inclusion.
  The layers of our filtration are now indexed by pairs of ideals (one covering the other) in this lattice.

  Studying these layers in mixed characteristic, even for simple examples such as the Temperley-Lieb algebras, promises to be a rich line of future inquiry.
\end{remark}

\subsection{A basis}\label{subsec:basis}
We recall the construction of a basis for the cell module set out in~\cite{elias_2015}.
This basis is built inductively on the length of the source object.

Suppose we wish to build a basis for cell module $S(\underline{x}, \underline{y})$ and we can write $\underline x = \underline{\hat{x} x_0}$ where $x_0 \in \{1,\ldots,n-1\}$.
We build this out of basis elements for $S(\underline{\hat x}, \underline {z})$ for various $\underline z$.

To be exact, let $\mu \in \Omega(x_0)$ and pick an object $\underline{z_\mu}$ of weight $\wt(\underline y) - \mu$.
Suppose that $L_{\mathbf{t}}$ is a basis element for $S(\underline{\hat{x}}, \underline{z_\mu})$ where the index $\mathbf t$ is a sequence of weights.
We construct basis element $L_{\mathbf{t}a}$ for $S(\underline{x}, \underline{y})$ diagrammatically as follows:
\begin{equation}\label{eq:inductive_diagram}
\vcenter{\hbox{
  \begin{tikzpicture}
    \draw[thick] (0,0) -- (0, 2) -- (1,1.5) -- (1,0)-- cycle;
    \draw[very thick] (-0.2, -.2) -- (1.5,-.2); \node at (0.75,-.5) {\tiny$a$};
    \draw[very thick] (-0.2, 0.1) -- (0,0.1);
    \draw[very thick] (-0.2, 0.4) -- (0,0.4);
    \draw[very thick] (-0.2, 0.7) -- (0,0.7);
    \draw[very thick] (-0.2, 1.0) -- (0,1.0);
    \draw[very thick] (-0.2, 1.3) -- (0,1.3);
    \draw[very thick] (-0.2, 1.6) -- (0,1.6);
    \draw[very thick] (-0.2, 1.9) -- (0,1.9);
    
    \node at (0.5, 0.75) {$L_{\mathbf t}$};
    
    \draw[very thick] (1.5, 1.35) -- (2.1,1.35);
    \draw[very thick] (1.5, 1.1) -- (2.1,1.1);
    \draw[very thick] (1.5, 0.85) -- (2.1,0.85);

    \draw[thick, fill=black!20!white] (2.1,-.4) rectangle(2.6, 1.5);
    \node at (2.35, 0.6) {$0$};
    
    \draw[thick, fill=black!20!white] (1.0,0) rectangle(1.5, 1.5);
    \node at (1.25, 0.75) {$0$};
    
    \draw[thick] (1.5,-0.4) rectangle(2.1, 0.6);
    \node at (1.8, 0.1) {$E_\mu$};
    
    \draw[very thick] (2.6, -.2) -- (2.8, -.2);
    \draw[very thick] (2.6, 0.1) -- (2.8, 0.1);
    \draw[very thick] (2.6, 0.4) -- (2.8, 0.4);
    \draw[very thick] (2.6, 0.7) -- (2.8, 0.7);
    \draw[very thick] (2.6, 1.0) -- (2.8, 1.0);
    \draw[very thick] (2.6, 1.3) -- (2.8, 1.3);
  \end{tikzpicture}
}}
\end{equation}
Here, grey boxes with zeros denote neutral ladders that do not change the weight. The exact neutral ladders used are not important, but must ensure that the output of $L_{\mathbf t}$, i.e. $\underline{z_\mu}$ is post-composable by $E_\mu$ in the manner shown above, and also to obtain the correct object as the target.
Then the set of morphisms $L_{\mathbf t a}$, over all the $\mu$ and $L_{\mathbf{t}}$ for each $\mu$, form a basis for $S(\underline{x}, \underline{y})$.

\begin{example}\label{eg:tl_basis_construct}
 If $n = 2$ we are in the Temperley-Lieb case; the only option for $a$ is $\bullet$.
 The two maps $E_\mu$ corresponding to the weights in $V_1$ are
 \begin{equation*}
 E_+ = 
  \vcenter{\hbox{
    \begin{tikzpicture}[scale=0.6]
      \draw[very thick] (-.2,0) -- (0,0) edge[out=0,in=0] (0,0.5);
      \draw[very thick] (0,0.5) -- (-.2,0.5);
    \end{tikzpicture}
  }}\quad\quad\quad\quad E_- = 
  \vcenter{\hbox{
    \begin{tikzpicture}[scale=0.6]
      \draw[very thick] (0,0) edge[out=0,in=180] (1,0);
    \end{tikzpicture}
  }}.
 \end{equation*}
 There are no neutral ladders.
 The basis inductively described then is 
 \begin{equation*}
 \left\{
    \vcenter{\hbox{
      \begin{tikzpicture}
        \draw[thick] (0,0) -- (0, 2) -- (1,1.55) -- (1,0)-- cycle;
        \draw[very thick] (-0.2, -.2) -- (1.3,-.2);

        \draw[very thick] (-0.2, 0.1) -- (0,0.1);
        \draw[very thick] (-0.2, 0.4) -- (0,0.4);
        \draw[very thick] (-0.2, 0.7) -- (0,0.7);
        \draw[very thick] (-0.2, 1.0) -- (0,1.0);
        \draw[very thick] (-0.2, 1.3) -- (0,1.3);
        \draw[very thick] (-0.2, 1.6) -- (0,1.6);
        \draw[very thick] (-0.2, 1.9) -- (0,1.9);
        
        \node at (0.5, 0.75) {${L}_{\mathbf{t}}$};
        
        \draw[very thick] (1.0, -.2) -- (1.3, -.2);
        \draw[very thick] (1.0, 0.1) -- (1.3, 0.1);
        \draw[very thick] (1.0, 0.4) -- (1.3, 0.4);
        \draw[very thick] (1.0, 0.7) -- (1.3, 0.7);
        \draw[very thick] (1.0, 1.0) -- (1.3, 1.0);
        \draw[very thick] (1.0, 1.3) -- (1.3, 1.3);
      \end{tikzpicture}
    }}
    ,
    \vcenter{\hbox{
      \begin{tikzpicture}
        \draw[thick] (0,0) -- (0, 2) -- (1,1.55) -- (1,0)-- cycle;
        \draw[very thick] (-0.2, -.2) -- (1,-.2);
        \draw[very thick] (1,-.2) arc (-90:90:0.2);
        
        \draw[very thick] (-0.2, 0.1) -- (0,0.1);
        \draw[very thick] (-0.2, 0.4) -- (0,0.4);
        \draw[very thick] (-0.2, 0.7) -- (0,0.7);
        \draw[very thick] (-0.2, 1.0) -- (0,1.0);
        \draw[very thick] (-0.2, 1.3) -- (0,1.3);
        \draw[very thick] (-0.2, 1.6) -- (0,1.6);
        \draw[very thick] (-0.2, 1.9) -- (0,1.9);
        
        \node at (0.5, 0.75) {${L}_{\mathbf{t}}$};
        
        \draw[very thick] (1.0, 0.5) -- (1.3, 0.5);
        \draw[very thick] (1.0, 0.8) -- (1.3, 0.8);
        \draw[very thick] (1.0, 1.1) -- (1.3, 1.1);
        \draw[very thick] (1.0, 1.4) -- (1.3, 1.4);
      \end{tikzpicture}
    }}
    \right\},
\end{equation*}
which is exactly the usual inductive construction for the diagram basis for the Temperley-Lieb cell modules.
\end{example}

Now for $\mu \in \Omega(a)$, the morphism $E_\mu$ increases the weight by $\mu - a$.
Thus in each step we increase the weight by $a$ (by adding a strand labelled $a$) and then again by $\mu - a$ through an application of $E_\mu$.

We thus obtain a labelling of the basis by \emph{miniscule Littelmann paths}, $E(\underline{x}, w(\underline y))$.
This is the set of sequences of weights $\underline{\mu_1\mu_2\cdots\mu_k}$ with each $\mu_i\in\Omega(x_i))$ such that the partial sum $\mu_1 + \cdots + \mu_t$ is always dominant and the total sum $\sum_i \mu_i = \wt(\underline y)$.

\begin{lemma}\label{lem:standardtableaux}
  The set $E(\underline{1^a}, \lambda)$ is in natural bijection with standard tableaux of shape $\lambda$.
\end{lemma}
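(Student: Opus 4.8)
The plan is to unwind the definition of $E(\underline{1^a}, \blam)$ in the special case where the source object is $\underline{1^a}$, i.e.\ every $x^{(i)} = 1$, and match it up with the standard-tableau recursion. First I would note that $\Omega(1) = \{\varpi_1, \varpi_2, \ldots\}$ — wait, more precisely $\Omega(1)$ is the set of weights of the vector representation $V_1 = \C^n$, which as $n$-part compositions are exactly the $n$ standard basis vectors $e_1, \ldots, e_n$, i.e.\ the weights obtained by placing a single box in one of the $n$ rows. So a miniscule Littelmann path $\underline{\mu_1 \cdots \mu_a} \in E(\underline{1^a}, \blam)$ is a sequence of single-box additions $\mu_t = e_{r_t}$ such that each partial sum $\mu_1 + \cdots + \mu_t$ is dominant (a partition) and the total sum is $\blam$.

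The key observation is that recording, for each $t$, the row $r_t$ in which the $t$-th box is added produces a filling of the Young diagram of $\blam$ by the numbers $1, \ldots, a$, where box number $t$ sits in row $r_t$ and in the column determined by how many boxes have previously been placed in that row. The dominance condition on every partial sum $\mu_1 + \cdots + \mu_t$ being a partition is precisely the condition that, at each stage, the shape $\mu_1 + \cdots + \mu_t$ is a valid Young diagram; equivalently, when we add the box numbered $t$ to row $r_t$, row $r_t$ must not become longer than row $r_t - 1$. Carrying this out for all $t$ shows the numbers increase along rows (automatic, since within a fixed row successive boxes get larger labels in left-to-right order) and down columns (this is exactly the ``no row longer than the one above it at every intermediate step'' constraint applied at the moment each box enters). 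I would make this precise as the standard bijection between \emph{standard Young tableaux} of shape $\blam$ and such lattice-word sequences (sometimes called Yamanouchi words read in reverse, or ballot sequences).

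Concretely, the steps are: (1) identify $\Omega(1)$ with the $n$ one-box weights $\{e_1, \ldots, e_n\}$ and hence identify an element of $E(\underline{1^a}, \blam)$ with a word $r_1 r_2 \cdots r_a$ in $\{1, \ldots, n\}$; (2) define the map sending such a word to the filling $T$ with $T(r_t, \#\{s < t : r_s = r_t\} + 1) = t$; (3) verify that the dominance-at-every-stage condition is equivalent to $T$ being column-strict and that row-strictness is automatic, so $T$ is a standard tableau of shape $\blam$; (4) exhibit the inverse map, reading a standard tableau $T$ of shape $\blam$ by taking $r_t$ to be the row of the entry $t$ in $T$, and check the partial sums are dominant because $T$ is standard; (5) observe the two maps are mutually inverse, and that the bijection is natural in the sense that it does not depend on any choices.

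The main obstacle, such as it is, is purely bookkeeping: one must be careful that the partial-sum dominance condition captures column-strictness at \emph{every} intermediate shape, not merely the final one — a tableau can be row- and column-strict overall while passing through a non-partition intermediate shape if boxes are added in a bad order, so the per-step condition is genuinely needed and genuinely sufficient. I expect no conceptual difficulty beyond stating this equivalence cleanly; the result is essentially the classical correspondence between standard Young tableaux and saturated chains in Young's lattice from $\varnothing$ to $\blam$, specialised to the minuscule path language of the preceding subsection. I would therefore keep the proof short, citing the classical fact and spelling out only the translation of vocabulary.
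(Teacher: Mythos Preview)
Your proposal is correct and follows essentially the same approach as the paper: identify $\Omega(1)$ with the $n$ single-box weights $e_1,\ldots,e_n$, so that a minuscule Littelmann path is precisely a sequence of single-box additions whose every partial sum is a partition, i.e.\ a saturated chain from $\emptypart$ to $\blam$ in Young's lattice, which is the classical description of a standard tableau. The paper's proof is a one-line sketch of exactly this observation, whereas you spell out the explicit bijection and its inverse; your extra caution about intermediate shapes is harmless but unnecessary, since for a genuinely standard tableau the subshape $\{\text{entries}\le t\}$ is automatically a partition for every $t$.
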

\begin{proof}
  The weights of $V_{1^a}$ are of the form
  \begin{equation*}
      \Omega(1) = \left\{
      \colwei{(1,0,0,0,0)}\;,\;
      \colwei{(0,1,0,0,0)}\;,\;
      \colwei{(0,0,1,0,0)}\;,\;
      \colwei{(0,0,0,1,0)}\;,\;
      \colwei{(0,0,0,0,1)}
      \right\}
  \end{equation*}
  and so a step in a miniscule Littelmann path corresponds to adding a box to a partition without making it non-dominant (i.e. no longer a partition).
\end{proof}

\begin{corollary}
When $n = 1$, the dimension of $S(\underline{1^a}, \lambda)$ is
\begin{equation}\label{eq:temperley_lieb_dim}
  d_\lambda = \binom{a}{\lambda_2} - \binom{a}{\lambda_2 - 1}.
\end{equation}
When $n = 2$, The dimension of $S(\underline{1^a}, \lambda)$ is
\begin{equation}
  d_\lambda = \frac{a! (\lambda_1 - \lambda_2 + 1)(\lambda_1 - \lambda_3 + 2)(\lambda_2 - \lambda_3+1)}{(\lambda_1 + 2)!(\lambda_2 + 1)!\lambda_3!}.
\end{equation}
\end{corollary}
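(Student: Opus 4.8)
The plan is to identify $d_\blam$ with a count of standard tableaux and then apply the hook length formula. By the inductive ladder-basis construction of \cref{eq:inductive_diagram}, a basis of $S(\underline{1^a},\blam)$ is indexed by $E(\underline{1^a},\blam)$, and \cref{lem:standardtableaux} puts this set in bijection with the standard Young tableaux of shape $\blam$. Hence $d_\blam = \dim S(\underline{1^a},\blam) = f^\blam$, the number of standard tableaux of shape $\blam$, and the task is reduced to evaluating $f^\blam$ for partitions $\blam$ with at most two, respectively three, parts.

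First I would record the Frobenius determinantal form of the hook length formula: writing $\blam = (\lambda_1 \ge \cdots \ge \lambda_k \ge 0)$ with $|\blam| = a$ and setting $\ell_i = \lambda_i + k - i$,
\begin{equation*}
  f^\blam = \frac{a!\displaystyle\prod_{1 \le i < j \le k}(\ell_i - \ell_j)}{\displaystyle\prod_{i=1}^{k}\ell_i!}.
\end{equation*}
This is standard and drops out of the ordinary hook length formula once the product of hook lengths of a $k$-row diagram is rewritten in terms of the $\ell_i$; alternatively it may simply be cited.

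It then remains only to substitute. Taking $k = 2$, so that $a = \lambda_1 + \lambda_2$, gives $f^\blam = a!\,(\lambda_1 - \lambda_2 + 1)/\bigl((\lambda_1+1)!\,\lambda_2!\bigr)$, and using $a - \lambda_2 = \lambda_1$ this rearranges at once to $\binom{a}{\lambda_2} - \binom{a}{\lambda_2 - 1}$, which is \cref{eq:temperley_lieb_dim}. Taking $k = 3$ and substituting $\ell_1 = \lambda_1 + 2$, $\ell_2 = \lambda_2 + 1$, $\ell_3 = \lambda_3$ yields precisely
\begin{equation*}
  f^\blam = \frac{a!\,(\lambda_1 - \lambda_2 + 1)(\lambda_1 - \lambda_3 + 2)(\lambda_2 - \lambda_3 + 1)}{(\lambda_1 + 2)!\,(\lambda_2 + 1)!\,\lambda_3!}.
\end{equation*}
I do not expect a genuine obstacle: the content is the combinatorial bijection already delivered by \cref{lem:standardtableaux}, and what remains is elementary algebra. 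The one point requiring care is the bookkeeping of conventions --- checking that in the relevant rank the target weight $\blam$ has at most the stated number of nonzero parts (so that the unused $\lambda_i$ are zero and the formula applies with $k = 2$ or $k = 3$), and verifying that the degenerate cases, such as a single-row $\blam$ or $\lambda_2 = 0$, agree with the displayed expressions.
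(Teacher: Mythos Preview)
Your proposal is correct and is essentially the same as the paper's own proof, which consists of the single sentence ``This is just the hook length-formula.'' You have merely supplied the explicit Frobenius form and carried out the substitutions the paper leaves to the reader.
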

\begin{proof}
This is essentially a restatement of the familiar hook length-formula.
\end{proof}
Note that \cref{eq:temperley_lieb_dim} is the dimensions of the cell modules for the Temperley-Lieb algebra on $a$ strands, as expected.

\begin{example}
  In the same vein as \cref{eg:tl_basis_construct}, let us construct the basis for $S(\underline{+^a}, \lambda)$ for $n=2$.
  The three elements of $\Omega(+)$ correspond to
\begin{equation*}
 E_\colwei{(1,0,0)} = 
  \vcenter{\hbox{
    \begin{tikzpicture}[scale=0.6,decoration = {markings,
              mark = at position 0.5 with {\arrow{latex}}
                }]
      \draw[very thick] (-.5,0) -- (0,0);
      \draw[very thick,postaction=decorate] (0,0) -- (.5,0);
    \end{tikzpicture}
  }}
  \quad\quad\quad\quad E_\colwei{(0,1,0)} = 
  \vcenter{\hbox{
    \begin{tikzpicture}[scale=0.6,decoration = {markings,
              mark = at position 0.5 with {\arrow{latex}}
                }]
      \draw[very thick] (-.55,0.25) -- (-0.1,.25);
      \draw[very thick] (-.55,0.75) -- (-0.1,.75);
      \draw[very thick, postaction=decorate] (-.1,.25) -- (0,.25);
      \draw[very thick, postaction=decorate] (-.1,.75) -- (0,.75);
      \draw[very thick] (0,0.25) -- (0.3,0.5);
      \draw[very thick] (0,0.75) -- (0.3,0.5);
      \draw[very thick,postaction=decorate] (0.55,0.5) -- (0.3,0.5);
      \draw[very thick] (1,0.5) -- (0.55,0.5);
    \end{tikzpicture}
  }}
  \quad\quad\quad\quad E_\colwei{(0,0,1)} = 
  \vcenter{\hbox{
    \begin{tikzpicture}[scale=0.6,decoration = {markings,
              mark = at position 0.5 with {\arrow{latex}}
                }]
      \draw[very thick] (-.2,0) -- (0,0);
      \draw[very thick] (0,0) -- (0.3,0);
      \draw[very thick,postaction=decorate] (0.3,0) edge[out=0,in=0] (0.3,0.5);
      \draw[very thick,postaction=decorate] (0,0.5) -- (-.2,0.5);
      \draw[very thick] (0,0.5) -- (0.3,0.5);
    \end{tikzpicture}
  }}
  .
 \end{equation*}  
 Note that we would not be able to insert $E_{\colwei{(0,1,0)}}$ if $L_{\mathbf{t}}$ didn't have a rightward facing strand at its target, and we wouldn't be able to use  $E_{\colwei{(0,1,0)}}$  unless it had a leftward facing one. To circumvent this issue we can apply $E_{\colwei{(0,1,0)}}$  or $E_{\colwei{(0,1,0)}}$ by using a neutral ladder to bring a rightward, respectively leftward facing strand to the bottom, which is always possible as long as such a strand exists. This is exactly the statement that $\lambda + \mu$ is dominant.
 
 Now, since we are looking for a basis of $S(\underline{+}^a, \lambda)$, we consider standard tableaux of shape $\lambda$.
 Given such a standard tableaux $s \in T(\lambda)$, we will denote its basis element by $L_s$.
  If the box with the largest number is removed from $s$, we obtain $t \in T(\lambda_i^-)$ for some $i$. Here $\lambda_i^-$ denotes the partition obtained from $\lambda$ by deleting the rightmost box on the $i$-th row.
  Now, depending on the value of $i$, we set $\mathbb{s}$ to be one of the three options below.
    \begin{equation}
    \substack{\vcenter{\hbox{
      \begin{tikzpicture}[scale=0.8,decoration = {markings,
              mark = at position 0.5 with {\arrow{latex}}
                }]
        \draw[thick] (0,0) -- (0, 2) -- (1,1.5) -- (1,0.5)-- cycle;
        \draw[very thick,postaction=decorate] (0, -.2) -- (1,-.2);
        \node at (0.5, 1) {${L}_t$};
        \draw[thick, fill=black!20!white] (1,-.4) rectangle(1.5, 1.5);
        \node at (1.25, 0.55) {$0$};
        \draw[line width=3pt] (1.5,0.55) -- (2.0,0.55);
      \end{tikzpicture}
    }}\vspace{1em}\\t \in T(\lambda_0^-)}\quad\quad\quad\quad\quad\quad
    \substack{\vcenter{\hbox{
      \begin{tikzpicture}[scale=0.8,decoration = {markings,
              mark = at position 0.8 with {\arrow{latex}}
                }]
        \draw[thick] (0,0) -- (0, 2) -- (1,1.5) -- (1,0.5)-- cycle;
        \draw[very thick,postaction=decorate] (0, -.2) -- (1.5,-.2);
        \draw[very thick] (1.5,-.2) -- (2.0, 0.2);
        \draw[very thick,postaction=decorate] (1.5, 0.6) -- (2.0,0.2);
        \draw[very thick,postaction=decorate](2.5,0.2) -- (2.0, 0.2);
        \node at (0.5, 1) {${L}_t$};
        \draw[line width=3pt] (1.5,1.1) -- (2.5,1.1);
        \draw[thick, fill=black!20!white] (2.5,0) rectangle(3.0, 1.5);
        \node at (2.75, 0.75) {$0$};
        \draw[thick, fill=black!20!white] (1.0,0.5) rectangle(1.5, 1.5);
        \node at (1.25, 1.) {$0$};
        \draw[line width=3pt] (3.0,0.75) -- (3.5,0.75);
      \end{tikzpicture}
    }}\vspace{1em}\\t \in T(\lambda_1^-)}\quad\quad\quad\quad\quad\quad
    \substack{\vcenter{\hbox{
      \begin{tikzpicture}[scale=0.8,decoration = {markings,
              mark = at position 0.5 with {\arrow{latex}}
                }]
        \draw[thick] (0,0) -- (0, 2) -- (1,1.5) -- (1,0.5)-- cycle;ways
        \draw[very thick,postaction=decorate] (0, -.2) -- (1,-.2);
        \draw[very thick] (1,-.2) arc  (-90:90:0.4);
        \node at (0.5, 1) {${L}_t$};
        \draw[line width=3pt] (1.0,1.1) -- (1.5,1.1);
      \end{tikzpicture}
    }}\vspace{1em}\\t \in T(\lambda_2^-)}
    \end{equation}
    Note that ${L}_t$ is a map from $(+^n,0)$ to some object of weight $w(\lambda_i^-)$.
    The grey box labelled 0 is a neutral map that simply rearranges the output suitably.
\end{example}

There is a natural extension of \cref{lem:standardtableaux}.
\begin{lemma}
  The set $E(\underline{x}, \lambda)$ is in natural bijection with row-semistandard tableaux of shape $\lambda$ and weight $\underline{x}$.
\end{lemma}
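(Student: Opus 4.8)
The plan is to extend the bijection of Lemma~\ref{lem:standardtableaux} from the case $\underline x = \underline{1^a}$ to a general source object $\underline x$. Recall that an element of $E(\underline x, \blam)$ is a sequence $\underline{\mu_1\cdots\mu_k}$ with $\mu_i \in \Omega(x^{(i)})$, whose partial sums are always dominant, and whose total sum is $\blam$. I would define the map to tableaux as follows: the step $\mu_i$ records the boxes added to the Young diagram as we pass from partial sum $\mu_1 + \cdots + \mu_{i-1}$ to $\mu_1 + \cdots + \mu_i$, and we fill each of those boxes with the entry $i$. Since $\mu_i \in \Omega(x^{(i)})$, it is a $0$/$1$ vector with exactly $x^{(i)}$ ones (as noted in the proof of Lemma~\ref{lem:standardtableaux}, the weights of $V_j$ are the $0$/$1$ vectors of weight $j$), so exactly $x^{(i)}$ boxes get the label $i$; hence the resulting filling has weight $\underline x = \underline{x^{(1)}\cdots x^{(k)}}$. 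Dominance of the partial sum at each stage is exactly the condition that the shape remains a partition after each batch of additions.

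The content of the lemma is that this produces precisely the \emph{row-semistandard} tableaux: the entries weakly increase along rows and strictly increase down columns. The column-strictness is automatic because a single step $\mu_i$ adds at most one box in each row (the vector $\mu_i$ has entries in $\{0,1\}$), so no two boxes in the same column ever receive the same label $i$; and labels are assigned in increasing order of $i$, so a box below another has a strictly larger label. Row-weak-increase requires a short argument: within a single step, the $x^{(i)}$ new boxes labelled $i$ occupy a horizontal strip, and in particular at most one per column, which is consistent with weak increase along rows once one checks the new boxes in each row are appended to the right of all previously-placed boxes — this is immediate since partial sums are partitions and a new box in row $r$ is added at column $\lambda^{(i)}_r$. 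I would then exhibit the inverse map: given a row-semistandard tableau $T$ of shape $\blam$ and weight $\underline x$, let $\mu_i$ be the weight of the horizontal strip consisting of the boxes of $T$ labelled $i$; row-semistandardness guarantees the boxes labelled $i$ form a horizontal strip (at most one per column), so $\mu_i$ is a $0$/$1$ vector of weight $x^{(i)}$, i.e.\ $\mu_i \in \Omega(x^{(i)})$, and the partial sums are the shapes of the sub-tableaux on entries $\le i$, which are partitions. These two maps are visibly mutually inverse.

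The main obstacle — really the only non-formal point — is verifying that ``$\mu_i$ is a weight of $V_{x^{(i)}}$'' is \emph{equivalent} to ``the boxes labelled $i$ form a horizontal strip'', i.e.\ pinning down why the miniscule condition $\mu_i \in \Omega(x^{(i)})$ forces at-most-one-box-per-column rather than merely fixing the total number of new boxes. This is where the minuscule nature of the fundamental representations of type $A$ is used: $\Omega(\varpi_j)$ consists of all $0$/$1$ vectors with exactly $j$ ones and nothing else, so adding $\mu_i$ to a partition adds at most one cell to any given row, and by conjugating this statement (or directly, by comparing successive partitions) one sees it adds at most one cell to any given column as well. I would make this explicit as a small lemma or inline remark, then the rest follows by the routine bookkeeping sketched above. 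One should also note, as in Lemma~\ref{lem:standardtableaux}, that when all $x^{(i)} = 1$ each step adds a single box and ``row-semistandard of weight $(1,1,\ldots,1)$'' degenerates to ``standard'', recovering the previous lemma as a special case.
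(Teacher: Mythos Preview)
Your overall strategy---build a partition step by step, labelling the boxes added at step $i$ with the entry $i$, and exhibit the inverse by reading off the skew shapes---is exactly right and is the natural extension of Lemma~\ref{lem:standardtableaux}. The paper states this lemma without proof, so your written argument is a genuine contribution.

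However, you have the row/column conditions reversed, and this stems from a concrete logical slip. You correctly observe that $\mu_i\in\Omega(x^{(i)})$ is a $0/1$ vector, so the step adds \emph{at most one box to each row}. From this you conclude ``no two boxes in the same column ever receive the same label $i$'', but that does not follow: from the partition $(2,2)$ the step $\mu=(1,1,0,\ldots)$ produces $(3,3)$, placing two boxes in column $3$ with the same label. What \emph{does} follow from ``at most one new box per row'' is that no two boxes in the same \emph{row} receive the same label; hence entries increase \emph{strictly along rows} and only \emph{weakly down columns}. That is precisely the paper's definition of row-semistandard (note the convention stated immediately after the lemma: strict along rows, weak along columns). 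Correspondingly, the boxes labelled $i$ form a \emph{vertical} strip, not a horizontal one, and your description of the inverse map should say that row-semistandardness forces the $i$-labelled boxes to lie in a vertical strip, i.e.\ at most one per row, so the difference of successive shapes is a $0/1$ vector in $\Omega(x^{(i)})$.

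Once you swap ``row'' with ``column'' and ``horizontal'' with ``vertical'' in your strictness discussion, the argument is complete and correct.
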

Here a row-semistandard tableaux is one in which the elements increase strictly along rows and weakly along columns.
A (column) semistandard tableaux is strictly increasing along columns and weakly along rows.
We will denote the set of all row-semistandard tableaux of shape $\lambda$ and weight $\underline x$ as $\operatorname{r-SSYT}(\lambda, \underline x)$, and the set of all semistandard tableaux of that shape and weight by $\operatorname{SSYT}(\lambda, \underline x)$.

A useful construction is the \emph{reduced Young poset} $\Y^n_{\underline x}$ \cite{ceccherini-silberstein_scarabotti_tolli_2010}\footnote{Our convention for row-semistandard tableaux results in the graph of transposed partitions to those in the literature.}.
This consists of all the partitions
\begin{equation}
    \bigcup_{i = 0}^{\operatorname{length} \lambda} \left\{
    \bmu \text{ a partition of at most $n$ parts} \;:\; \operatorname{r-SSYT}(\bmu, \underline{x_1\cdots x_i}) \neq \emptyset\right\}
\end{equation}
arranged in natural layers, with edges between a partition in layer $i$ and one in layer $i+1$ if $x_i$ boxes can be added to obtain it.

It is  clear an element of $E(\underline{x}, \lambda)$ or equivalently $\operatorname{r-SSYT}(\lambda, \underline{x})$ is equivalent to a path from the empty partition $\pmb{\varnothing}$ to $\lambda$ in $\Y_{\underline{x}}^n$. Further, in the case $\underline{x} = \underline{1^n}$, we recover exactly the Young graph $\Y_n$.

\begin{example}
We present the graph $\Y_{\underline{3231}}^5$.
  \begin{center}
    \begin{tikzpicture}[scale=0.5]
    
\node (d) at (0,0) {$\emptypart$};

\node (111) at (0,2) {$\smolyng{(1,1,1)}$};

\node (11111) at (-5,4) {$\smolyng{(1,1,1,1,1)}$};
\node (2111) at (0,4) {$\smolyng{(2,1,1,1)}$};
\node (221) at (5,4) {$\smolyng{(2,2,1)}$};

\node (22211) at (-7.5,8) {$\smolyng{(2,2,2,1,1)}$};
\node (2222) at (-4.5,8) {$\smolyng{(2,2,2,2)}$};
\node (32111) at (-1.5,8) {$\smolyng{(3,2,1,1,1)}$};
\node (3221) at (1.5,8) {$\smolyng{(3,2,2,1)}$};
\node (3311) at (4.5,8) {$\smolyng{(3,3,1,1)}$};
\node (332) at (7.5,8) {$\smolyng{(3,3,2)}$};

\node (22221) at (-9,12) {$\smolyng{(2,2,2,2,1)}$};
\node (32211) at (-7,12) {$\smolyng{(3,2,2,1,1)}$};
\node (42111) at (-3,12) {$\smolyng{(4,2,1,1,1)}$};
\node (3222) at (-5,12) {$\smolyng{(3,2,2,2)}$};
\node (33111) at (-1,12) {$\smolyng{(3,3,1,1,1)}$};
\node (4221) at (1,12) {$\smolyng{(4,2,2,1)}$};
\node (4311) at (3,12) {$\smolyng{(4,3,1,1)}$};
\node (3321) at (5,12) {$\smolyng{(3,3,2,1)}$};
\node (432) at (7,12) {$\smolyng{(4,3,2)}$};
\node (333) at (9,12) {$\smolyng{(3,3,3)}$};

\draw[thick] (d) -- (111);
\draw[thick] (111) -- (11111);
\draw[thick] (111) -- (2111);
\draw[thick] (111) -- (221);
\draw[thick] (11111) -- (22211);
\draw[thick] (2111) -- (3221);
\draw[thick] (2111) -- (22211);
\draw[thick] (2111) -- (32111);
\draw[thick] (2111) -- (2222);
\draw[thick] (221) -- (332);
\draw[thick] (221) -- (3221);
\draw[thick] (221) -- (22211);
\draw[thick] (221) -- (32111);
\draw[thick] (221) -- (3311);
\draw[thick] (332) -- (3321);
\draw[thick] (332) -- (432);
\draw[thick] (332) -- (333);
\draw[thick] (3221) -- (32211);
\draw[thick] (3221) -- (3321);
\draw[thick] (3221) -- (4221);
\draw[thick] (3221) -- (3222);
\draw[thick] (22211) -- (22221);
\draw[thick] (22211) -- (32211);
\draw[thick] (32111) -- (42111);
\draw[thick] (32111) -- (32211);
\draw[thick] (32111) -- (33111);
\draw[thick] (2222) -- (22221);
\draw[thick] (2222) -- (3222);
\draw[thick] (3311) -- (3321);
\draw[thick] (3311) -- (4311);
\draw[thick] (3311) -- (33111);
    \end{tikzpicture}
\end{center}
\end{example}


\section{Gram Determinants}\label{sec:determinants}
Our main result of this section concerns the Gram determinants of the 
form on $S(\underline x, \lambda)$ as defined in \cref{eq:usual_form}.

Our observation is a change of basis from the construction in \cref{eq:inductive_diagram}.
In this, we simply replace one of the neutral ladders with the appropriate clasp.
\begin{equation}\label{eq:inductive_diagram_2}
\vcenter{\hbox{
  \begin{tikzpicture}
    \draw[thick] (0,0) -- (0, 2) -- (0.9,1.5) -- (0.9,0)-- cycle;
    \draw[very thick] (-0.2, -.2) -- (1.5,-.2); \node at (0.75,-.5) {\tiny$a$};
    \draw[very thick] (-0.2, 0.1) -- (0,0.1);
    \draw[very thick] (-0.2, 0.4) -- (0,0.4);
    \draw[very thick] (-0.2, 0.7) -- (0,0.7);
    \draw[very thick] (-0.2, 1.0) -- (0,1.0);
    \draw[very thick] (-0.2, 1.3) -- (0,1.3);
    \draw[very thick] (-0.2, 1.6) -- (0,1.6);
    \draw[very thick] (-0.2, 1.9) -- (0,1.9);
    
    \node at (0.45, 0.75) {${L}_{\mathbf t}$};
    
    \draw[very thick] (1.5, 1.35) -- (2.1,1.35);
    \draw[very thick] (1.5, 1.1) -- (2.1,1.1);
    \draw[very thick] (1.5, 0.85) -- (2.1,0.85);

    \draw[thick, fill=black!20!white] (2.1,-.4) rectangle(2.6, 1.5);
    \node at (2.35, 0.6) {$0$};
    
    \draw[thick, fill=purple!30!white] (0.9,0) rectangle(1.5, 1.5);
    \node at (1.2, 0.75) {$\JW$};
    
    \draw[thick] (1.5,-0.4) rectangle(2.1, 0.6);
    \node at (1.8, 0.1) {$E_\mu$};
    
    \draw[very thick] (2.6, -.2) -- (2.8, -.2);
    \draw[very thick] (2.6, 0.1) -- (2.8, 0.1);
    \draw[very thick] (2.6, 0.4) -- (2.8, 0.4);
    \draw[very thick] (2.6, 0.7) -- (2.8, 0.7);
    \draw[very thick] (2.6, 1.0) -- (2.8, 1.0);
    \draw[very thick] (2.6, 1.3) -- (2.8, 1.3);
  \end{tikzpicture}
}}
\end{equation}
This represents  an upper unitriangular change of basis and so the Gram matrix for this basis has the same determinant as that for the original basis.

However, if we evaluate an inner product,
\begin{equation}
\left \langle
\vcenter{\hbox{
  \begin{tikzpicture}
    \draw[thick] (0,0) -- (0, 2) -- (0.9,1.5) -- (0.9,0)-- cycle;
    \draw[very thick] (-0.2, -.2) -- (1.5,-.2); \node at (0.75,-.5) {\tiny$a$};
    \draw[very thick] (-0.2, 0.1) -- (0,0.1);
    \draw[very thick] (-0.2, 0.4) -- (0,0.4);
    \draw[very thick] (-0.2, 0.7) -- (0,0.7);
    \draw[very thick] (-0.2, 1.0) -- (0,1.0);
    \draw[very thick] (-0.2, 1.3) -- (0,1.3);
    \draw[very thick] (-0.2, 1.6) -- (0,1.6);
    \draw[very thick] (-0.2, 1.9) -- (0,1.9);
    
    \node at (0.45, 0.75) {${L}_{\mathbf{t}}$};
    
    \draw[very thick] (1.5, 1.35) -- (2.1,1.35);
    \draw[very thick] (1.5, 1.1) -- (2.1,1.1);
    \draw[very thick] (1.5, 0.85) -- (2.1,0.85);

    \draw[thick, fill=black!20!white] (2.1,-.4) rectangle(2.6, 1.5);
    \node at (2.35, 0.6) {$0$};
    
    \draw[thick, fill=purple!30!white] (0.9,0) rectangle(1.5, 1.5);
    \node at (1.2, 0.75) {$\JW$};
    
    \draw[thick] (1.5,-0.4) rectangle(2.1, 0.6);
    \node at (1.8, 0.1) {$E_\mu$};
    
    \draw[very thick] (2.6, -.2) -- (2.8, -.2);
    \draw[very thick] (2.6, 0.1) -- (2.8, 0.1);
    \draw[very thick] (2.6, 0.4) -- (2.8, 0.4);
    \draw[very thick] (2.6, 0.7) -- (2.8, 0.7);
    \draw[very thick] (2.6, 1.0) -- (2.8, 1.0);
    \draw[very thick] (2.6, 1.3) -- (2.8, 1.3);
  \end{tikzpicture}
}},
\vcenter{\hbox{
  \begin{tikzpicture}
    \draw[thick] (0,0) -- (0, 2) -- (0.9,1.5) -- (0.9,0)-- cycle;
    \draw[very thick] (-0.2, -.2) -- (1.5,-.2); \node at (0.75,-.5) {\tiny$a$};
    \draw[very thick] (-0.2, 0.1) -- (0,0.1);
    \draw[very thick] (-0.2, 0.4) -- (0,0.4);
    \draw[very thick] (-0.2, 0.7) -- (0,0.7);
    \draw[very thick] (-0.2, 1.0) -- (0,1.0);
    \draw[very thick] (-0.2, 1.3) -- (0,1.3);
    \draw[very thick] (-0.2, 1.6) -- (0,1.6);
    \draw[very thick] (-0.2, 1.9) -- (0,1.9);
    
    \node at (0.45, 0.75) {${L}_{\mathbf{s}}$};
    
    \draw[very thick] (1.5, 1.35) -- (2.1,1.35);
    \draw[very thick] (1.5, 1.1) -- (2.1,1.1);
    \draw[very thick] (1.5, 0.85) -- (2.1,0.85);

    \draw[thick, fill=black!20!white] (2.1,-.4) rectangle(2.6, 1.5);
    \node at (2.35, 0.6) {$0$};
    
    \draw[thick, fill=purple!30!white] (0.9,0) rectangle(1.5, 1.5);
    \node at (1.2, 0.75) {$\JW$};
    
    \draw[thick] (1.5,-0.4) rectangle(2.1, 0.6);
    \node at (1.8, 0.1) {$E_\nu$};
    
    \draw[very thick] (2.6, -.2) -- (2.8, -.2);
    \draw[very thick] (2.6, 0.1) -- (2.8, 0.1);
    \draw[very thick] (2.6, 0.4) -- (2.8, 0.4);
    \draw[very thick] (2.6, 0.7) -- (2.8, 0.7);
    \draw[very thick] (2.6, 1.0) -- (2.8, 1.0);
    \draw[very thick] (2.6, 1.3) -- (2.8, 1.3);
  \end{tikzpicture}
}}
\right \rangle
\end{equation}
we must compute
\begin{equation}
\vcenter{\hbox{
  \begin{tikzpicture}
    \draw[thick] (0,0) -- (0, 2) -- (0.9,1.5) -- (0.9,0)-- cycle;
    \draw[very thick] (0, -.2) -- (1.5,-.2); \node at (0,-.5) {\tiny$a$};

    \node at (0.45, 0.75) {${L}_{\mathbf{t}}$};
    
    \draw[very thick] (1.5, 1.35) -- (2.1,1.35);
    \draw[very thick] (1.5, 1.1) -- (2.1,1.1);
    \draw[very thick] (1.5, 0.85) -- (2.1,0.85);

    \draw[thick, fill=black!20!white] (2.1,-.4) rectangle(2.6, 1.5);
    \node at (2.35, 0.6) {$0$};
    
    \draw[thick, fill=purple!30!white] (0.9,0) rectangle(1.5, 1.5);
    \node at (1.2, 0.75) {$\JW$};
    
    \draw[thick] (1.5,-0.4) rectangle(2.1, 0.6);
    \node at (1.8, 0.1) {$E_\mu$};
    
    \draw[very thick] (2.6, -.2) -- (2.8, -.2);
    \draw[very thick] (2.6, 0.1) -- (2.8, 0.1);
    \draw[very thick] (2.6, 0.4) -- (2.8, 0.4);
    \draw[very thick] (2.6, 0.7) -- (2.8, 0.7);
    \draw[very thick] (2.6, 1.0) -- (2.8, 1.0);
    \draw[very thick] (2.6, 1.3) -- (2.8, 1.3);

\begin{scope}[yscale=1,xscale=-1]
    \draw[thick] (0,0) -- (0, 2) -- (0.9,1.5) -- (0.9,0)-- cycle;
    \draw[very thick] (0, -.2) -- (1.5,-.2);

    \node at (0.45, 0.75) {${L}_{\mathbf{s}}$};
    
    \draw[very thick] (1.5, 1.35) -- (2.1,1.35);
    \draw[very thick] (1.5, 1.1) -- (2.1,1.1);
    \draw[very thick] (1.5, 0.85) -- (2.1,0.85);

    \draw[thick, fill=black!20!white] (2.1,-.4) rectangle(2.6, 1.5);
    \node at (2.35, 0.6) {$0$};
    
    \draw[thick, fill=purple!30!white] (0.9,0) rectangle(1.5, 1.5);
    \node at (1.2, 0.75) {$\JW$};
    
    \draw[thick] (1.5,-0.4) rectangle(2.1, 0.6);
    \node at (1.8, 0.1) {$\iota E_\nu$};
    
    \draw[very thick] (2.6, -.2) -- (2.8, -.2);
    \draw[very thick] (2.6, 0.1) -- (2.8, 0.1);
    \draw[very thick] (2.6, 0.4) -- (2.8, 0.4);
    \draw[very thick] (2.6, 0.7) -- (2.8, 0.7);
    \draw[very thick] (2.6, 1.0) -- (2.8, 1.0);
    \draw[very thick] (2.6, 1.3) -- (2.8, 1.3);
\end{scope}
  \end{tikzpicture}
}}
\end{equation}
However, invoking \cref{cor:murderous_jw}, we see that this vanishes {\itshape identically} (that is not only up to some quotient) if ${L}_{\mathbf{s}}$ and ${L}_{\mathbf{t}}$ have different targets.
But if they have the same targets and this morphism is between two objects of the same weight, we must have $\mu = \nu$.
If they have the same target, of weight $\lambda' = \lambda - \mu$ then we are left with 
\begin{equation}
\left\langle {L}_{\mathbf s},{L}_{\mathbf t}\right\rangle \times 
\vcenter{\hbox{
  \begin{tikzpicture}
    \draw[very thick] (-0.3, -.2) -- (0.3,-.2); \node at (0,-.5) {\tiny$a$};

    \draw[very thick] (0.3, 1.35) -- (0.9,1.35);
    \draw[very thick] (0.3, 1.1) -- (0.9,1.1);
    \draw[very thick] (0.3, 0.85) -- (0.9,0.85);
    
    \draw[thick, fill=black!20!white] (0.9,-.4) rectangle(1.4, 1.5);
    \node at (1.15, 0.6) {$0$};
    
    \draw[thick, fill=purple!30!white] (-0.3,0) rectangle(0.3, 1.5);
    \node at (0, 0.75) {$\JW$};
    
    \draw[thick] (0.3,-0.4) rectangle(0.9, 0.6);
    \node at (0.6, 0.1) {$E_\mu$};
    
    \draw[very thick] (1.4, -.2) -- (1.6, -.2);
    \draw[very thick] (1.4, 0.1) -- (1.6, 0.1);
    \draw[very thick] (1.4, 0.4) -- (1.6, 0.4);
    \draw[very thick] (1.4, 0.7) -- (1.6, 0.7);
    \draw[very thick] (1.4, 1.0) -- (1.6, 1.0);
    \draw[very thick] (1.4, 1.3) -- (1.6, 1.3);

\begin{scope}[yscale=1,xscale=-1]
    \draw[very thick] (0.3, 1.35) -- (0.9,1.35);
    \draw[very thick] (0.3, 1.1) -- (0.9,1.1);
    \draw[very thick] (0.3, 0.85) -- (0.9,0.85);

    \draw[thick, fill=black!20!white] (0.9,-.4) rectangle(1.4, 1.5);
    \node at (1.15, 0.6) {$0$};

    \draw[thick] (0.3,-0.4) rectangle(0.9, 0.6);
    \node at (0.6, 0.1) {$\iota E_\mu$};
    
    \draw[very thick] (1.4, -.2) -- (1.6, -.2);
    \draw[very thick] (1.4, 0.1) -- (1.6, 0.1);
    \draw[very thick] (1.4, 0.4) -- (1.6, 0.4);
    \draw[very thick] (1.4, 0.7) -- (1.6, 0.7);
    \draw[very thick] (1.4, 1.0) -- (1.6, 1.0);
    \draw[very thick] (1.4, 1.3) -- (1.6, 1.3);
\end{scope}
  \end{tikzpicture}
}}=
\left\langle {L}_{\mathbf s},{L}_{\mathbf t}\right\rangle
\kappa_{\lambda'}^{\lambda} \id \mod J_{< \lambda}
\end{equation}
We have thus shown that in this basis, the Gram matrix is block diagonal, with one block for each weight $\lambda'$ one level lower down in the Young graph.
Each block is the Gram matrix for the corresponding cell module $S(\underline{\hat x}, \lambda')$ multiplied by the constant $\kappa_{\lambda'}^\lambda$. We have thus shown
\begin{equation}\label{eq:recursive-det}
\det G(\underline x,\lambda) = \prod_{
\substack{
    \lambda' \rightarrow \lambda\\
    \in \Y_{\underline{x}}^n}}
    (\kappa_{\lambda'}^\lambda)^{d(\underline x,\lambda')}
    \det G(\underline{\hat x},\lambda')
    .
\end{equation}
  \begin{example}
    For the Temperley-Lieb category, this change of basis and subsequent calculation is a standard method of computing the Gram determinant.  We have adopted the convention that the circle resolves to $+[2]$ for these examples, in order to make the equations cleaner. Recall however, that the webs for $U_q(\Sl_2)$ require the circle to resolve to $-[2]$. The substitution is straightforward.
    
    Consider the module $S(6,2)$ where usual basis is
    \begin{equation*}
    \substack{\vcenter{\hbox{
      \begin{tikzpicture}[scale=0.5]
        \draw[very thick] (0,0) edge[in=180,out=0] (1,1);
        \draw[very thick] (0,0.5) edge[in=180,out=0] (1,1.5);
        \draw[very thick] (0,1) edge[in=0,out=0] (0,1.5);
        \draw[very thick] (0,2) edge[in=0,out=0] (0,2.5);
      \end{tikzpicture}}}\vspace{.5em}\\
      {\Yboxdim{9pt}\footnotesize\young(1356,24)}
      }\;
      \substack{\vcenter{\hbox{
      \begin{tikzpicture}[scale=0.5]
        \draw[very thick] (0,0) edge[in=180,out=0] (1,1);
        \draw[very thick] (0,0.5) edge[in=180,out=0] (1,1.5);
        \draw[very thick] (0,1.5) edge[in=0,out=0] (0,2);
        \draw[very thick] (0,1) edge[in=0,out=0] (0,2.5);
      \end{tikzpicture}}}\vspace{.5em}\\
      {\Yboxdim{9pt}\footnotesize\young(1256,34)}
      }\;
      \substack{\vcenter{\hbox{
      \begin{tikzpicture}[scale=0.5]
        \draw[very thick] (0,0) edge[in=180,out=0] (1,1);
        \draw[very thick] (0,1.5) edge[in=180,out=0] (1,1.5);
        \draw[very thick] (0,.5) edge[in=0,out=0] (0,1);
        \draw[very thick] (0,2) edge[in=0,out=0] (0,2.5);
      \end{tikzpicture}}}\vspace{.5em}\\
      {\Yboxdim{9pt}\footnotesize\young(1346,25)}
      }\;
      \substack{\vcenter{\hbox{
      \begin{tikzpicture}[scale=0.5]
        \draw[very thick] (0,0) edge[in=180,out=0] (1,1);
        \draw[very thick] (0,2.5) edge[in=180,out=0] (1,1.5);
        \draw[very thick] (0,.5) edge[in=0,out=0] (0,1);
        \draw[very thick] (0,1.5) edge[in=0,out=0] (0,2);
      \end{tikzpicture}}}\vspace{.5em}\\
      {\Yboxdim{9pt}\footnotesize\young(1246,35)}
      }\;
      \substack{\vcenter{\hbox{
      \begin{tikzpicture}[scale=0.5]
        \draw[very thick] (0,0) edge[in=180,out=0] (1,1);
        \draw[very thick] (0,2.5) edge[in=180,out=0] (1,1.5);
        \draw[very thick] (0,.5) edge[in=0,out=0] (0,2);
        \draw[very thick] (0,1) edge[in=0,out=0] (0,1.5);
      \end{tikzpicture}}}\vspace{.5em}\\
      {\Yboxdim{9pt}\footnotesize\young(1236,45)}
      }\;
      \vcenter{\hbox{
      \begin{tikzpicture}[scale=0.5]
        \draw(0,-1) -- (0,3.5);
      \end{tikzpicture}}}\;
      \substack{\vcenter{\hbox{
      \begin{tikzpicture}[scale=0.5]
        \draw[very thick] (0,1) edge[in=180,out=0] (1,1);
        \draw[very thick] (0,1.5) edge[in=180,out=0] (1,1.5);
        \draw[very thick] (0,0) edge[in=0,out=0] (0,0.5);
        \draw[very thick] (0,2) edge[in=0,out=0] (0,2.5);
      \end{tikzpicture}}}\vspace{.5em}\\
      {\Yboxdim{9pt}\footnotesize\young(1345,26)}
      }\;
      \substack{\vcenter{\hbox{
      \begin{tikzpicture}[scale=0.5]
        \draw[very thick] (0,1) edge[in=180,out=0] (1,1);
        \draw[very thick] (0,2.5) edge[in=180,out=0] (1,1.5);
        \draw[very thick] (0,0) edge[in=0,out=0] (0,0.5);
        \draw[very thick] (0,1.5) edge[in=0,out=0] (0,2);
      \end{tikzpicture}}}\vspace{.5em}\\
      {\Yboxdim{9pt}\footnotesize\young(1245,36)}
      }\;
      \substack{\vcenter{\hbox{
      \begin{tikzpicture}[scale=0.5]
        \draw[very thick] (0,2) edge[in=180,out=0] (1,1);
        \draw[very thick] (0,2.5) edge[in=180,out=0] (1,1.5);
        \draw[very thick] (0,0) edge[in=0,out=0] (0,0.5);
        \draw[very thick] (0,1) edge[in=0,out=0] (0,1.5);
      \end{tikzpicture}}}\vspace{.5em}\\
      {\Yboxdim{9pt}\footnotesize\young(1235,46)}
      }\;
      \substack{\vcenter{\hbox{
      \begin{tikzpicture}[scale=0.5]
        \draw[very thick] (0,2) edge[in=180,out=0] (1,1);
        \draw[very thick] (0,2.5) edge[in=180,out=0] (1,1.5);
        \draw[very thick] (0,0) edge[in=0,out=0] (0,1.5);
        \draw[very thick] (0,0.5) edge[in=0,out=0] (0,1);
      \end{tikzpicture}}}\vspace{.5em}\\
      {\Yboxdim{9pt}\footnotesize\young(1234,56)}
      }.
    \end{equation*}
    Here we have drawn both the diagram basis and the corresponding tableaux.
    Note that we have partitioned the basis by the row in which the box labelled 6 occurs.

    The Gram matrix in this basis is
    \begin{equation}
        G(6,2)=
        \begin{pmatrix}
            \delta^2&\delta&\delta&1&\delta&0&0&0&0\\
            \delta&\delta^2&1&\delta&1&0&0&0&0\\
            \delta&1&\delta^2&\delta&1&\delta&1&0&0\\
            1&\delta&\delta&\delta^2&\delta&1&\delta&1&\delta\\
            \delta&1&1&\delta&\delta^2&0&1&\delta&1\\
            0&0&\delta&1&0&\delta^2&\delta&0&0\\
            0&0&1&\delta&1&\delta&\delta^2&\delta&1\\
            0&0&0&1&\delta&0&\delta&\delta^2&\delta\\
            0&1&0&\delta&1&0&1&\delta&\delta^2
        \end{pmatrix}.
    \end{equation}
    Our construction gives a new basis in terms of the idempotent $\JW_3$ and $\JW_2$.  In this case the $\JW_1$ is ``absorbed'' completely by the condition on elements of cell modules.
    \begin{equation*}
    \vcenter{\hbox{
      \begin{tikzpicture}[scale=0.5]
        \draw[very thick] (0,0) edge[in=180,out=0] (1,1);
        \draw[very thick] (0,0.5) edge[in=180,out=0] (1,1.5);
        \draw[very thick] (0,1) edge[in=0,out=0] (0,1.5);
        \draw[very thick] (0,2) edge[in=0,out=0] (0,2.5);
        \draw[thick,fill=purple!30!white] (1,1.25) rectangle (1.5, 1.75);
        \draw[very thick] (1,1) edge (1.75, 1);
        \draw[very thick] (1.5,1.5) -- (1.75,1.5);
      \end{tikzpicture}}}\quad
      \vcenter{\hbox{
      \begin{tikzpicture}[scale=0.5]
        \draw[very thick] (0,0) edge[in=180,out=0] (1,1);
        \draw[very thick] (0,0.5) edge[in=180,out=0] (1,1.5);
        \draw[very thick] (0,1.5) edge[in=0,out=0] (0,2);
        \draw[very thick] (0,1) edge[in=0,out=0] (0,2.5);
        \draw[thick,fill=purple!30!white] (1,1.25) rectangle (1.5, 1.75);
        \draw[very thick] (1,1) edge (1.75, 1);
        \draw[very thick] (1.5,1.5) -- (1.75,1.5);
      \end{tikzpicture}}}\quad
      \vcenter{\hbox{
      \begin{tikzpicture}[scale=0.5]
        \draw[very thick] (0,0) edge[in=180,out=0] (1,1);
        \draw[very thick] (0,1.5) edge[in=180,out=0] (1,1.5);
        \draw[very thick] (0,.5) edge[in=0,out=0] (0,1);
        \draw[very thick] (0,2) edge[in=0,out=0] (0,2.5);
        \draw[thick,fill=purple!30!white] (1,1.25) rectangle (1.5, 1.75);
        \draw[very thick] (1,1) edge (1.75, 1);
        \draw[very thick] (1.5,1.5) -- (1.75,1.5);
      \end{tikzpicture}}}\quad
      \vcenter{\hbox{
      \begin{tikzpicture}[scale=0.5]
        \draw[very thick] (0,0) edge[in=180,out=0] (1,1);
        \draw[very thick] (0,2.5) edge[in=180,out=0] (1,1.5);
        \draw[very thick] (0,.5) edge[in=0,out=0] (0,1);
        \draw[very thick] (0,1.5) edge[in=0,out=0] (0,2);
        \draw[thick,fill=purple!30!white] (1,1.25) rectangle (1.5, 1.75);
        \draw[very thick] (1,1) edge (1.75, 1);
        \draw[very thick] (1.5,1.5) -- (1.75,1.5);
      \end{tikzpicture}}}\quad
      \vcenter{\hbox{
      \begin{tikzpicture}[scale=0.5]
        \draw[very thick] (0,0) edge[in=180,out=0] (1,1);
        \draw[very thick] (0,2.5) edge[in=180,out=0] (1,1.5);
        \draw[very thick] (0,.5) edge[in=0,out=0] (0,2);
        \draw[very thick] (0,1) edge[in=0,out=0] (0,1.5);
        \draw[thick,fill=purple!30!white] (1,1.25) rectangle (1.5, 1.75);
        \draw[very thick] (1,1) edge (1.75, 1);
        \draw[very thick] (1.5,1.5) -- (1.75,1.5);
      \end{tikzpicture}}}\quad
      \vcenter{\hbox{
      \begin{tikzpicture}[scale=0.5]
        \draw(0,-1) -- (0,3.5);
      \end{tikzpicture}}}\quad
      \vcenter{\hbox{
      \begin{tikzpicture}[scale=0.5]
        \draw[very thick] (0,1) edge[in=180,out=0] (1,1);
        \draw[very thick] (0,1.5) edge[in=180,out=0] (1,1.5);
        \draw[very thick] (0,0) -- (1.5,0);
        \draw[very thick] (1.5,0) edge[in=0,out=0] (1.5,0.5);
        \draw[very thick] (0,0.5) -- (1.25,0.5);
        \draw[very thick] (0,2) edge[in=0,out=0] (0,2.5);
        \draw[thick,fill=purple!30!white] (1,0.25) rectangle (1.5, 1.75);
        \draw[very thick] (1.5,1) edge (1.75, 1);
        \draw[very thick] (1.5,1.5) -- (1.75,1.5);
      \end{tikzpicture}}}\quad
      \vcenter{\hbox{
      \begin{tikzpicture}[scale=0.5]
        \draw[very thick] (0,1) edge[in=180,out=0] (1,1);
        \draw[very thick] (0,2.5) edge[in=180,out=0] (1,1.5);
        \draw[very thick] (0,0) -- (1.5,0);
        \draw[very thick] (0,0.5) -- (1.25,0.5);
        \draw[very thick] (1.5,0) edge[in=0,out=0] (1.5,0.5);
        \draw[very thick] (0,1.5) edge[in=0,out=0] (0,2);
        \draw[thick,fill=purple!30!white] (1,0.25) rectangle (1.5, 1.75);
        \draw[very thick] (1.5,1) edge (1.75, 1);
        \draw[very thick] (1.5,1.5) -- (1.75,1.5);
      \end{tikzpicture}}}\quad
      \vcenter{\hbox{
      \begin{tikzpicture}[scale=0.5]
        \draw[very thick] (0,2) edge[in=180,out=0] (1,1);
        \draw[very thick] (0,2.5) edge[in=180,out=0] (1,1.5);
        \draw[very thick] (0,0) -- (1.5,0);
        \draw[very thick] (0,0.5) -- (1.25,0.5);
        \draw[very thick] (1.5,0) edge[in=0,out=0] (1.5,0.5);
        \draw[very thick] (0,1) edge[in=0,out=0] (0,1.5);
        \draw[thick,fill=purple!30!white] (1,0.25) rectangle (1.5, 1.75);
        \draw[very thick] (1.5,1) edge (1.75, 1);
        \draw[very thick] (1.5,1.5) -- (1.75,1.5);
      \end{tikzpicture}}}\quad
      \vcenter{\hbox{
      \begin{tikzpicture}[scale=0.5]
        \draw[very thick] (0,2) edge[in=180,out=0] (1,1);
        \draw[very thick] (0,2.5) edge[in=180,out=0] (1,1.5);
        \draw[very thick] (0,0) -- (1.5,0);
        \draw[very thick] (0,1.5) edge[in=180,out=0] (1,0.5);
        \draw[very thick] (1.5,0) edge[in=0,out=0] (1.5,0.5);
        \draw[very thick] (0,0.5) edge[in=0,out=0] (0,1);
        \draw[thick,fill=purple!30!white] (1,0.25) rectangle (1.5, 1.75);
        \draw[very thick] (1.5,1) edge (1.75, 1);
        \draw[very thick] (1.5,1.5) -- (1.75,1.5);
      \end{tikzpicture}}}.
    \end{equation*}
    Now, in this new basis, 
    \begin{align}
        G'(6,2) &=
        \begin{pmatrix}
            \delta^2&\delta  &\delta  &1       &\delta  &0       &0       &0       &0       \\
            \delta  &\delta^2&1       &\delta  &1       &0       &0       &0       &0       \\
            \delta  &1       &\delta^2&\delta  &1       &0       &0       &0       &0       \\
            1       &\delta  &\delta  &\delta^2&\delta  &0       &0       &0       &0       \\
            \delta  &1       &1       &\delta  &\delta^2&0       &0       &0       &0       \\
            0       &0       &0       &0       &0       &\frac{\delta^4-2\delta^2}{\delta^2 - 1}&\frac{\delta^3 - 2\delta}{\delta^2-1}  &0       &0       \\
            0       &0       &0       &0       &0       &\frac{\delta^3 - 2\delta}{\delta^2-1}  &\frac{\delta^4-2\delta^2}{\delta^2 - 1}&\frac{\delta^3 - 2\delta}{\delta^2-1}  &0       \\
            0       &0       &0       &0       &0       &0       &\frac{\delta^3 - 2\delta}{\delta^2-1}  &\frac{\delta^4-2\delta^2}{\delta^2 - 1}&\frac{\delta^3 - 2\delta}{\delta^2-1}  \\
            0       &0       &0       &0       &0       &0       &0       &\frac{\delta^3 - 2\delta}{\delta^2-1} &\frac{\delta^4-2\delta^2}{\delta^2 - 1}
        \end{pmatrix}\vspace{1em}\\
        &=
        \left(
            \begin{array}{c|c}
            G(5,1) & 0 \\
            \hline
            0 & \frac{\delta^3-2\delta}{\delta^2-1}G(5,3)
        \end{array}
        \right).
    \end{align}
    The factor $(\delta^3 - 2\delta)/(\delta^2 - 1)$ comes from the trace of the Jones-Wenzl idempotent, $\JW_3$.
  \end{example}

We could in fact bubble this construction down and use clasps instead of the neutral ladders in \cref{eq:inductive_diagram} always. This would give us an orthogonal basis of the cell modules and would preserve the Gram determinant.

\begin{definition}
    Consider the construction of a basis as in \cref{subsec:basis} where all neutral ladders are replaced by clasps. This defines a new basis denoted $\mathbb{L}_{\mathbf t}$ called the \emph{clasped light ladders}.
\end{definition}

\begin{example}
    Let us continue our example above and write out the clasped light ladder basis. We present the elements after simplification by absorbing all Jones-Wenzl projectors into each other.
    \begin{equation*}
    \vcenter{\hbox{
      \begin{tikzpicture}[scale=0.5]
        \draw[very thick] (0,0) edge[in=180,out=0] (1,1);
        \draw[very thick] (0,0.5) edge[in=180,out=0] (1,1.5);
        \draw[very thick] (0,1) edge[in=0,out=0] (0,1.5);
        \draw[very thick] (0,2) edge[in=0,out=0] (0,2.5);

      \end{tikzpicture}}}\quad
      \vcenter{\hbox{
      \begin{tikzpicture}[scale=0.5]
        \draw[very thick] (0,0) edge[in=180,out=0] (1,1);
        \draw[very thick] (0,0.5) edge[in=180,out=0] (1,1.5);
        \draw[very thick] (0,1.5) edge[in=0,out=0] (0,2);
        \draw[very thick] (0,1) edge[in=0,out=0] (0,2.5);
        \draw[thick,fill=purple!30!white] (-.5,1.75) rectangle (0, 2.75);
        \draw[very thick] (-.75, 0) edge (0,0);
        \draw[very thick] (-.75, 0.5) edge (0,0.5);
        \draw[very thick] (-.75, 1) edge (0,1);
        \draw[very thick] (-.75, 1.5) edge (0,1.5);
        \draw[very thick] (-.75, 2) edge (-.5,2);
        \draw[very thick] (-.75, 2.5) edge (-.5,2.5);
      \end{tikzpicture}}}\quad
      \vcenter{\hbox{
      \begin{tikzpicture}[scale=0.5]
        \draw[very thick] (0,0) edge[in=180,out=0] (1,1);
        \draw[very thick] (0,1.5) edge[in=180,out=0] (1,1.5);
        \draw[very thick] (0,.5) edge[in=0,out=0] (0,1);
        \draw[very thick] (-0.75,2) edge[in=0,out=0] (-.75,2.5);
        \draw[thick,fill=purple!30!white] (-.5,0.75) rectangle (0, 1.75);

        \draw[very thick] (-.75, 0) edge (0,0);
        \draw[very thick] (-.75, 0.5) edge (0,0.5);
        \draw[very thick] (-.75, 1) edge (-.5,1);
        \draw[very thick] (-.75, 1.5) edge (-.5,1.5);
      \end{tikzpicture}}}\quad
      \vcenter{\hbox{
      \begin{tikzpicture}[scale=0.5]
        \draw[very thick] (0,0) edge[in=180,out=0] (1,1);
        \draw[very thick] (0,2) edge[in=180,out=0] (1,1.5);
        \draw[very thick] (0,.5) edge[in=0,out=0] (0,1);
        \draw[very thick] (-1,1.5) edge[in=0,out=0] (-1,2);
        \draw[very thick] (-1.75,1.5) edge (-1,1.5);
        \draw[thick,fill=purple!30!white] (-1.5,1.75) rectangle (-1, 2.75);
        \draw[thick,fill=purple!30!white] (-.5,0.75) rectangle (0, 2.25);
        \draw[very thick] (-1.75,0) edge (0, 0);
        \draw[very thick] (-1.75,0.5) edge (0, 0.5);
        \draw[very thick] (-1.75, 2) edge (-1.5,2);
        \draw[very thick] (-1.75, 2.5) edge (-1.5,2.5);
        \draw[very thick] (-1.75, 1) edge (-.5,1);
        \draw[very thick] (-1, 2.5) edge[out=180, out=0] (-.5,2);
      \end{tikzpicture}}}\quad
      \vcenter{\hbox{
      \begin{tikzpicture}[scale=0.5]
        \draw[very thick] (0,0) edge[in=180,out=0] (1,1);
        \draw[very thick] (0,2.5) edge[in=180,out=0] (1,1.5);
        \draw[very thick] (0,.5) edge[in=0,out=0] (0,2);
        \draw[very thick] (0,1) edge[in=0,out=0] (0,1.5);
        \draw[very thick] (-.75, 0) edge (0,0);
        \draw[very thick] (-.75, 0.5) edge (0,0.5);
        \draw[very thick] (-.75, 1) edge (0,1);
        \draw[very thick] (-.75, 1.5) edge (0,1.5);
        \draw[very thick] (-.75, 2) edge (-.5,2);
        \draw[very thick] (-.75, 2.5) edge (-.5,2.5);
        \draw[thick,fill=purple!30!white] (-.5,1.25) rectangle (0, 2.75);
      \end{tikzpicture}}}\quad
      \vcenter{\hbox{
      \begin{tikzpicture}[scale=0.5]
        \draw(0,-1) -- (0,3.5);
      \end{tikzpicture}}}\quad
      \vcenter{\hbox{
      \begin{tikzpicture}[scale=0.5]
        \draw[very thick] (0,1) edge[in=180,out=0] (1,1);
        \draw[very thick] (0,1.5) edge[in=180,out=0] (1,1.5);
        \draw[very thick] (0,0) -- (.75,0);
        \draw[very thick] (.75,0) edge[in=0,out=0] (.75,0.5);
        \draw[very thick] (0,0.5) -- (.75,0.5);
        \draw[very thick] (0,2) edge[in=0,out=0] (0,2.5);
        \draw[thick,fill=purple!30!white] (.25,0.25) rectangle (.75, 1.75);
      \end{tikzpicture}}}\quad
      \vcenter{\hbox{
      \begin{tikzpicture}[scale=0.5]
        \draw[very thick] (0,1) edge[in=180,out=0] (1,1);
        \draw[very thick] (0,2.5) edge[in=180,out=0] (1,1.5);
        \draw[very thick] (0,0) -- (1.5,0);
        \draw[very thick] (0,0.5) -- (1.25,0.5);
        \draw[very thick] (1.5,0) edge[in=0,out=0] (1.5,0.5);
        \draw[very thick] (0,1.5) edge[in=0,out=0] (0,2);
        \draw[thick,fill=purple!30!white] (1,0.25) rectangle (1.5, 1.75);
        \draw[very thick] (1.5,1) edge (1.75, 1);
        \draw[very thick] (1.5,1.5) -- (1.75,1.5);
        \draw[thick,fill=purple!30!white] (-.5,1.75) rectangle (0,2.75);
        \draw[very thick] (-.75, 0) edge (0,0);
        \draw[very thick] (-.75, 0.5) edge (0,0.5);
        \draw[very thick] (-.75, 1) edge (0,1);
        \draw[very thick] (-.75, 1.5) edge (0,1.5);
        \draw[very thick] (-.75, 2) edge (-.5,2);
        \draw[very thick] (-.75, 2.5) edge (-.5,2.5);
      \end{tikzpicture}}}\quad
      \vcenter{\hbox{
      \begin{tikzpicture}[scale=0.5]
        \draw[very thick] (0,2) edge[in=180,out=0] (1,1);
        \draw[very thick] (0,2.5) edge[in=180,out=0] (1,1.5);
        \draw[very thick] (0,0) -- (1.5,0);
        \draw[very thick] (0,0.5) -- (1.25,0.5);
        \draw[very thick] (1.5,0) edge[in=0,out=0] (1.5,0.5);
        \draw[very thick] (0,1) edge[in=0,out=0] (0,1.5);
        \draw[thick,fill=purple!30!white] (1,0.25) rectangle (1.5, 1.75);
        \draw[very thick] (1.5,1) edge (1.75, 1);
        \draw[very thick] (1.5,1.5) -- (1.75,1.5);
        \draw[thick,fill=purple!30!white] (-.5,1.25) rectangle (0,2.75);
        \draw[very thick] (-.75, 0) edge (0,0);
        \draw[very thick] (-.75, 0.5) edge (0,0.5);
        \draw[very thick] (-.75, 1) edge (0,1);
        \draw[very thick] (-.75, 1.5) edge (-.5,1.5);
        \draw[very thick] (-.75, 2) edge (-.5,2);
        \draw[very thick] (-.75, 2.5) edge (-.5,2.5);
      \end{tikzpicture}}}\quad
      \vcenter{\hbox{
      \begin{tikzpicture}[scale=0.5]
        \draw[very thick] (0,2) edge[in=180,out=0] (1,1);
        \draw[very thick] (0,2.5) edge[in=180,out=0] (1,1.5);
        \draw[very thick] (0,1.5) edge[in=0,out=0] (0,0);
        \draw[very thick] (0,0.5) edge[in=0,out=0] (0,1);
        \draw[very thick] (-.75, 0) edge (0,0);
        \draw[very thick] (-.75, 0.5) edge (0,0.5);
        \draw[very thick] (-.75, 1) edge (-.5,1);
        \draw[very thick] (-.75, 1.5) edge (-.5,1.5);
        \draw[very thick] (-.75, 2) edge (-.5,2);
        \draw[very thick] (-.75, 2.5) edge (-.5,2.5);
        \draw[thick,fill=purple!30!white] (-.5,0.75) rectangle (0,2.75);
      \end{tikzpicture}}}.
    \end{equation*}
    In this basis, the Gram matrix has form
    \begin{equation}
        G''(6,2) = \operatorname{diag}\left(\delta^2, \delta^2 - 1, \delta^2 - 1, \frac{(\delta^2-1)^2}{\delta^2}, \delta^3-2\delta, 
        \frac{\delta^3-2\delta}{\delta^2-1}, {\delta^2-2}, \frac{(\delta^3 - 2\delta)^2}{(\delta^2-1)^2}, \frac{\delta^4-3\delta^2 + 1}{\delta^2-1}
        \right)
    \end{equation}
\end{example}

\begin{remark}
  The importance of preserving the Gram determinant comes from having a view to the modular theory of webs.
  The relations that webs satisfy are given in terms of polynomials\footnote{Indeed, despite being written as rational functions they are in fact polynomials in $\delta$. This is because the relations all involve factors of the form $\gaussianquant{a}{b}$ which are all polynomials in $\delta$ (see \cref{sec:quantum}). This means the relations are well defined in any given field.} in $\delta$.
  Thus webs form a $\Z[\delta]$-spanning set for the category, and the basis constructed by Elias gives a $\Z[\delta]$ form.
  As such, given a suitable $p$-modular system (or rather, an $(\ell, p)$-modular system as in~\cite{martin_spencer_2021}), we can consider the modular representation theory of this category.
  
  Here, the cell modules are almost never simple, and the Gram determinant identifies this, vanishing when the cell module is not.
  Moreover there are filtrations of the cell modules that can be derived from the elementary divisors of the Gram matrix.
\end{remark}

Of course \cref{eq:recursive-det} is recursive in nature.  It behoves us to write down a ``closed form''.
  To do this, we must simply enumerate all the paths from $\pmb{\varnothing}$ to $\lambda$ in $\Y_{\underline{x}}^n$.
  Let this set be denoted $T(\underline{x}, \lambda)$ which has cardinality $d(\underline x, \lambda) = \dim S(\underline x, \lambda)$.  
  Then
  \begin{equation}
    \det G(\underline x, \lambda) = \prod_{t \in T(\underline x,\lambda)}\;\prod_{\substack{e\,:\, \mu_1 \to \mu_2\\ \in t}} (\kappa_{\mu_1}^{\mu_2})^{d(\underline x,\mu_1)}.
  \end{equation}
  We can write this in a different way:
  instead of counting edges by paths, we can count paths by edges.  Let $d(\underline x, \lambda\setminus\mu)$ be the number of paths in $\Y_{\underline x}^n$ from $\mu$ to $\lambda$.
  Then
  \begin{equation}\label{eq:final_form}
    \det G(\underline x,\lambda) = \prod_{\substack{\mu_1 \to \mu_2\\e \in \mathbb{Y}_n}}
    (\kappa_{\mu_1}^{\mu_2})^{d(\underline x,\mu_1) d(\underline x,\lambda \setminus \mu_2)}
  \end{equation}
  We note that $d(\underline x,\mu_1) d(\underline x,\lambda \setminus \mu_2)$ is exactly the number of paths from $\emptypart$ to $\lambda$ passing through both $\mu_1$ and (immediately subsequently) $\mu_2$. That is, it is the number of paths that include the edge $\mu_1 \to \mu_2$.
  
  \begin{example}
    If $n = 2$ so we are in the Temperley-Lieb case, then we must have $\underline{x} = \underline{\bullet^n}$ and the truncated Young poset $\Y_{\underline{\bullet^n}}$ is simply the branching graph
    \begin{center}
        \begin{tikzpicture}[xscale=-2]
          \node (empt) at (0,0) {$\emptypart$};
          
          \node (1) at (-.5,1) {$\smolyng{(1)}$};
          
          \node (2) at (-1,2) {$\smolyng{(2)}$};
          \node (11) at (0,2) {$\smolyng{(1,1)}$};
          
          \node (3) at (-1.5,3) {$\smolyng{(3)}$};
          \node (21) at (-.5,3) {$\smolyng{(2,1)}$};
                    
          \node (4) at (-2,4) {$\smolyng{(4)}$};
          \node (31) at (-1,4) {$\smolyng{(3,1)}$};
          \node (22) at (0,4) {$\smolyng{(2,2)}$};
          
          \node (5) at (-2.5,5) {$\smolyng{(5)}$};
          \node (41) at (-1.5,5) {$\smolyng{(4,1)}$};
          \node (32) at (-.5,5) {$\smolyng{(3,2)}$};
          
          \node (6) at (-3,6) {$\smolyng{(6)}$};
          \node (51) at (-2,6) {$\smolyng{(5,1)}$};
          \node (42) at (-1,6) {$\smolyng{(4,2)}$};
          \node (33) at (0,6) {$\smolyng{(3,3)}$};
          
          \node (7) at (-3.5,7) {$\smolyng{(7)}$};
          \node (61) at (-2.5,7) {$\smolyng{(6,1)}$};
          \node (52) at (-1.5,7) {$\smolyng{(5,2)}$};
          \node (43) at (-.5,7) {$\smolyng{(4,3)}$};
          
          \draw[thick] (empt) -- (1);
          \draw[thick] (1) -- (2);
          \draw[thick] (2) -- (3);
          \draw[thick] (3) -- (4);
          \draw[thick] (4) -- (5);
          \draw[thick] (5) -- (6);          
          \draw[thick] (6) -- (7);
          
          \draw[thick] (1) -- (11);
          \draw[thick] (2) -- (21);
          \draw[thick] (3) -- (31);
          \draw[thick] (4) -- (41);
          \draw[thick] (5) -- (51);
          \draw[thick] (6) -- (61);
          
          \draw[thick] (21) -- (22);
          \draw[thick] (31) -- (32);
          \draw[thick] (41) -- (42);
          \draw[thick] (51) -- (52);

          \draw[thick] (32) -- (33);
          \draw[thick] (42) -- (43);
          
          \draw[thick] (11) -- (21);
          \draw[thick] (21) -- (31);
          \draw[thick] (31) -- (41);
          \draw[thick] (41) -- (51);
          \draw[thick] (51) -- (61);

          \draw[thick] (22) -- (32);
          \draw[thick] (32) -- (42);
          \draw[thick] (42) -- (52);

          \draw[thick] (33) -- (43);
        \end{tikzpicture}
    \end{center}
    
    The intersection forms can be evaluated by tracing the Jones-Wenzl idempotents and are known to be
    \begin{equation*}
      \kappa_{(n,m)}^{(n+1,m+1)} = 1 \quad \quad \text{and} \quad\quad
      \kappa_{(n,m)}^{(n+1,m-1)} = \frac{[m+1]}{[m]}.
    \end{equation*}
\end{example}
  
  \begin{example}
    More generally, if $\underline x = \underline{1^n}$, so that paths are standard tableaux,
    $d(\underline x,\mu_1) d(\underline x,\lambda \setminus \mu_2)$ counts the number of standard tablueax of shape $\lambda$ with a $|\mu_2|$ in the box $\mu_2 \setminus \mu_1$.
    
    For a concrete example, 
    if $\mu_1 = (7,6,4,3,1)$ and $\mu_2 = (7,6,5,3,1)$ for $\lambda = (8,7,6,6,3,2,1)$, then we are counting all standard tableaux of the form below, where the numbers $1$ through $21 = |\mu_1|$ are placed in the pink boxes, 22 is as indicated in the green box and the remaining 23 through 33 are in the blue boxes.
    
    \vspace{1em}
    \begin{center}
        \begin{tikzpicture}[scale=0.6]
            \foreach \l/\r in {7/1,6/2,4/3,3/4,1/5} {
              \foreach \i in {1,...,\l} {
                \draw[thick,fill=purple!20!white] (\i,-\r) rectangle (\i+1,-\r-1);
              }
            }
            \foreach \s/\l/\r in {8/8/1,7/7/2,6/6/3,4/6/4,2/3/5,1/2/6,1/1/7} {
              \foreach \i in {\s,...,\l} {
                \draw[thick,fill=blue!20!white] (\i,-\r) rectangle (\i+1,-\r-1);
              }
            }
            \foreach \s/\l/\r in {5/5/3} {
              \foreach \i in {\s,...,\l} {
                \draw[thick,fill=green!20!black!20!white] (\i,-\r) rectangle (\i+1,-\r-1);
              }
            }
            \draw[line width=0.2em] (1,-1) -- (8,-1) -- (8,-2) -- (7,-2) --(7,-3) -- (6,-3) -- (5,-3) -- (5,-4)--(4,-4) -- (4,-5)--(2,-5)--(2,-6) -- (1,-6) -- cycle;
            \draw[line width=0.2em] (1,-1) -- (8,-1) -- (8,-2) -- (7,-2) --(7,-3) -- (6,-3) -- (6,-4) -- (5,-4)--(4,-4) -- (4,-5)--(2,-5)--(2,-6) -- (1,-6) -- cycle;
            \draw[line width=0.2em] (1,-1) -- (9,-1) -- (9,-2) -- (8,-2)  --(8,-3) -- (7,-3) -- (7,-5) -- (6,-5)--(6,-5) -- (4,-5)--(4,-6)--(3,-6)--(3,-7)--(2,-7)--(2,-8) -- (1,-8) -- cycle;
            \node at (5.5,-3.5) {$22$};
        \end{tikzpicture}
    \end{center}
  \end{example}
  
  \begin{remark}
  This process should work for other categories.
  In particular, we only needed
  \begin{enumerate}
    \item a category with a suitable poset of weights;
    \item  neutral ladders (which are often not even necessary because the object set and the weight set coincide);
    \item \clasp s, which are often inductively definable, 
    \item a reasonable way of constructing bases for cell modules which looks like the ladder construction (often a byproduct of the construction of \clasp s being constructed by an iterative ladder).
  \end{enumerate}
  In particular, we should be able to calculate the determinants for categories such as the planar rook monoid category (where we expect the answer to be ``not zero'', since all cell modules are irreducible over arbitrary characteristic) and coloured $\TL$ algebras.
\end{remark}

  As a final remark, we note that in any path through the Young graph, the number of parts of the partition is non-decreasing.  Hence we can remove the restriction $e \in \mathbb{Y}^n_{\underline x}$ in \cref{eq:final_form} and replace it with $e \in \mathbb{Y}_{\underline{x}}$, eliminating the dependence on $n$.
  
\begin{example}\label{eg:big}
  We complete the example that we have been building up to through the paper so far.
  
  Let us compute the Gram determinant for $S(\underline{32312}, \underline{4232})$ for $n=5$.
  That is, in maximal verbosity, we are considering the space
  \begin{equation}
      \Hom_{U_q(\mathfrak{sl}_5)}\left(
      V_3\otimes
      V_2\otimes
      V_3\otimes
      V_1\otimes
      V_2,
      V_4\otimes V_2\otimes V_3\otimes V_2\right)
  \end{equation}
  modulo morphisms factoring through representations of weight lower than the source,
  where $V_i = \bigwedge^i \C^5$.
  
  The weight of the source is
  \begin{equation}
      \lambda = \wt(\underline{4232}) = \colwei{(4,4,2,1)}
  \end{equation}
  and so we should consider the truncated Young poset $\Y^5_{\underline{32312}}$ of weights at most $\wt(\underline{4232})$.
  
  We will draw this poset, where each node has the number of paths from $\emptypart$ to that node annotated in blue and the number of paths to $\lambda$ in red. Each edge is labelled with the appropriate value of $\kappa$, which we can read from \cref{eg:all_kappa}.
  \begin{center}
    \begin{tikzpicture}[scale=0.7]
    \node (empt) at (0,-4) {${\emptypart}_{\color{blue!80!black}\mathbf{1}}^{\color{red!80!black}\mathbf{6}}$};
    
      \node (111) at (0,-1) {$\colweib{(1,1,1,0)}_{\color{blue!80!black}\mathbf{1}}^{\color{red!80!black}\mathbf{6}}$};
      
      \node (2111) at (-2,1) {$\colweib{(2,1,1,1)}_{\color{blue!80!black}\mathbf{1}}^{\color{red!80!black}\mathbf{1}}$};
      \node (221) at (2,1) {$\colweib{(2,2,1,0)}_{\color{blue!80!black}\mathbf{1}}^{\color{red!80!black}\mathbf{5}}$};
      
      \node (3221) at (-4,5) {$\colweib{(3,2,2,1)}_{\color{blue!80!black}\mathbf{2}}^{\color{red!80!black}\mathbf{1}}$};
      \node (3311) at (4,5) {$\colweib{(3,3,1,1)}_{\color{blue!80!black}\mathbf{1}}^{\color{red!80!black}\mathbf{2}}$};
      \node (332) at (0,5) {$\colweib{(3,3,2,0)}_{\color{blue!80!black}\mathbf{1}}^{\color{red!80!black}\mathbf{2}}$};
      
      \node (432) at (-4,8) {$\colweib{(4,3,2,0)}_{\color{blue!80!black}\mathbf{1}}^{\color{red!80!black}\mathbf{1}}$};
      \node (3321) at (0,8) {$\colweib{(3,3,2,1)}_{\color{blue!80!black}\mathbf{4}}^{\color{red!80!black}\mathbf{1}}$};
      \node (4311) at (4,8) {$\colweib{(4,3,1,1)}_{\color{blue!80!black}\mathbf{1}}^{\color{red!80!black}\mathbf{1}}$};

      \node (4421) at (0,11) {$\colweib{(4,4,2,1)}_{\color{blue!80!black}\mathbf{6}}^{\color{red!80!black}\mathbf{1}}$};
      
      \draw[very thick] (empt) -- node[midway, xshift=-6pt] {\small$1$} (111) ;
      \draw[very thick] (111) -- node[midway, xshift=-10pt,yshift=-2pt] {\small$[3]$} (2111);
      \draw[very thick] (111) -- node[midway, xshift=8pt,yshift=-2pt] {\small$1$} (221);
      
      \draw[very thick] (2111) -- node[midway, xshift=-6pt,yshift=-1pt] {\small$1$} (3221);
      \draw[very thick] (221) -- node[midway, xshift=5pt,yshift=9pt] {\small$\frac{[2][4]}{[3]}$} (3221);
      \draw[very thick] (221) -- node[midway, xshift=10pt,yshift=-2pt] {\small$[2]$} (3311);
      \draw[very thick] (221) -- node[midway, xshift=7pt,yshift=2pt] {\small$1$} (332);
      
      \draw[very thick] (3221) -- node[midway, xshift=6pt,yshift=15pt] {\small$[2]$} (3321);
      \draw[very thick] (3311) -- node[midway, xshift=8pt,yshift=8pt] {\small$\frac{[4]}{[2]}$} (3321);
      \draw[very thick] (332) -- node[midway, xshift=12pt] {\small$\frac{[3][6]}{[2][4]}$} (3321);
      \draw[very thick] (3311) -- node[midway, xshift=4pt,yshift=-1pt] {\small$1$} (4311);
      \draw[very thick] (332) -- node[midway, xshift=-18pt,yshift=6pt] {\small$1$} (432);
      
      \draw[very thick] (3321) -- node[midway, xshift=4pt,yshift=0pt] {\small$1$} (4421);
      \draw[very thick] (4311) -- node[midway, xshift=8pt,yshift=8pt] {\small$\frac{[2][5]}{[4]}$}  (4421);
      \draw[very thick] (432) -- node[midway, xshift=-8pt,yshift=8pt] {\small$\frac{[3][7]}{[6]}$} (4421);
      
    \end{tikzpicture}
\end{center}
From this, the algorithm for computing the determinant is simple. Simply multiply together all the coefficients on the edges with multiplicity given by the product of the blue number at their bottom and red number at their top. 
In our example, only two edges are simultaneously labelled with a number other than 1 and appear multiple paths from the bottom to the top: $\colwei{(2,2,1,0)}\to \colwei{(3,3,1,1)}$ and $\colwei{(3,2,2,1)}\to \colwei{(3,3,2,1)}$. Both have associated intersection form $[2]$. We can then write
\begin{align}
    \det G(\underline{32312}, \underline{4232}) &=
    [3]\cdot
    \frac{[2][4]}{[3]}\cdot
    [2]^2\cdot
    [2]^2\cdot
    \frac{[3][6]}{[2][4]}\cdot
    \frac{[4]}{[2]}\cdot
    \frac{[3][7]}{[6]}\cdot
    \frac{[2][5]}{[4]}\\
    &=
    [2]^4[3]^2[5][7]\nonumber
\end{align}
We note that this is in fact a polynomial in $[2]$ despite being expressed as a product of rational functions.
\end{example}
\begin{lemma}
  The Gram determinant $\det G(\underline{x},\underline{y})$ is a polynomial in $[2]$.
\end{lemma}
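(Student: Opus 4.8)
The plan is to bypass the product formula \cref{eq:final_form} altogether and argue instead from integrality of Elias' original diagram basis. Set $\delta = [2]$, and recall that every quantum number $[m]$, and every quantum binomial $\gaussianquant{a}{b}$, is a polynomial in $\delta$. First I would observe that the basis of $S(\underline x,\blam)$ constructed in \cref{eq:inductive_diagram} consists of genuine web diagrams: each basis element is assembled by composition and tensor product out of plain strands, neutral ladders (which by \cref{lem:neutrals} and \cite[Corollary 2.24]{elias_2016} are honest diagrams) and the elementary light ladders $E_\mu$ of \cite[\S 2.5]{elias_2015} (also honest diagrams). In particular every basis element lies in the $\Z[\delta]$-lattice spanned by web diagrams.

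Next I would use the fact --- noted in the footnote to \cref{rem:define_our_basis} --- that all the defining relations of the web category are polynomial in $\delta$, being expressed through the $\gaussianquant{a}{b}$. Evaluating the cellular pairing of two basis elements $m_1,m_2$ amounts to forming the composite $(\iota m_2)\circ m_1$, rewriting it in the diagram basis of $\End(\underline y)$ by means of these relations, and reading off the coefficient of $\id_{\underline y}$; since none of these manipulations leaves $\Z[\delta]$, that coefficient is a polynomial in $\delta$. Hence the Gram matrix $G(\underline x,\blam)$ in the diagram basis has entries in $\Z[\delta]$, so $\det G(\underline x,\blam)\in\Z[\delta]$. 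Finally, the change of basis from this diagram basis to the clasp-adorned basis of \cref{eq:inductive_diagram_2}, and thence to the fully orthogonal basis of \cref{rem:define_our_basis}, is upper-unitriangular at each stage and so leaves the determinant unchanged; thus the product of rational functions on the right of \cref{eq:final_form} equals this polynomial, exactly as witnessed in \cref{eg:big}.

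The one step I would take care over --- and the reason this lemma is stated here rather than deduced directly from \cref{eq:final_form} --- is to confirm that passing ``modulo morphisms factoring through lower weights'' introduces no denominators. Using Elias' manifestly integral diagram basis of $\End(\underline y)$, rather than an idempotent-truncated basis, is what makes this automatic: the extraction of the coefficient of $\id_{\underline y}$ is then a $\Z[\delta]$-linear projection, so no inverse quantum numbers can enter. Granting this, the argument is complete, and checking the claimed unitriangularity of the intermediate changes of basis (already asserted in the text) is the only remaining bookkeeping.
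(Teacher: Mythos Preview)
Your proposal is correct and follows essentially the same approach as the paper: compute the Gram matrix in Elias' original diagram basis $L_t$, observe that diagram composition and the web relations stay within the $\Z[\delta]$-span of diagrams, and conclude that each entry --- and hence the determinant --- lies in $\Z[\delta]$. The paper's own proof is a three-sentence version of exactly this argument; your additional care about why the quotient modulo lower-weight morphisms introduces no denominators, and the remark that the unitriangular change of basis transports this integrality to the product formula, are welcome elaborations but not departures from the paper's line.
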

\begin{proof}
  Consider calculating the Gram matrix in the $L_{\mathbb t}$ basis.
  The elements of this basis are diagrams and the composition of diagrams lies in the $\Z[\delta]$-span of other diagrams.
  Hence each inner product lies in $\Z[\delta]$ so the determinant of this matrix must too.
\end{proof}

\begin{remark}
  In \cref{eg:big} the result was expressible as a product of quantum integers.
  However, this is not always possible.
\end{remark}


\section{Howe Duality}\label{sec:howe}
Consider the space $\C^n \otimes \C^m$.
This carries an action of $U_q(\Sl_n)$ by action on the first component, and of $U_q(\Gl_m)$ by action on the second.
These can be extended to commuting actions on $\bigwedge_q^t(\C^n \otimes \C^m)$ for any $k$.
Here the quantum symmetric square is defined as
\begin{equation}
    \bigwedge{}_q^{\hspace{-0.3em}t}(\C^n \otimes \C^m) = \left(T(\C^n \otimes \C^m) / \langle S^2_q(\C^n \otimes \C^m)\rangle \right)_t.
\end{equation}
That is to say, it is the $t$-th homogenous level of the tensor algebra of $\C^n \otimes \C^m$ quotiented out by the ideal generated by the quantum symmetric square.
Exact generators and relations for the external algebra $\bigwedge{}_q^{\hspace{-0.3em}\bullet}(\C^n \otimes \C^m)$ can be found in \cite[\S 4.2]{cautis_kamnitzer_morrison_2014}

\begin{theorem}\label{thm:howe_comm}\cite[Thm. 4.2.2]{cautis_kamnitzer_morrison_2014}
  The actions of $U_q(\Sl_n)$ and $U_q(\Gl_m)$ on $\bigwedge^k(\C^n \otimes \C^m)$ generate each other's commutants.
\end{theorem}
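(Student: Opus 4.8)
The plan is to obtain the statement from the double centralizer theorem, once the bimodule $W:=\bigwedge^k(\C^n\otimes\C^m)$ has been identified with a multiplicity-free direct sum. First I would recall why both factors act, and act by commuting operators: realising $\C^n\otimes\C^m$ with its natural braiding as a $U_q(\Gl_n)\otimes U_q(\Gl_m)$-module --- for instance as the restriction of the vector representation $\C^{nm}$ of $U_q(\Gl_{nm})$ along the (quantised) Kronecker embedding $U_q(\Gl_n)\otimes U_q(\Gl_m)\hookrightarrow U_q(\Gl_{nm})$, whose two tensor factors commute by construction --- the $q$-exterior power $\bigwedge^k_q(\C^n\otimes\C^m)$ becomes the fundamental module $V^{nm}_{\varpi_k}$, and restriction supplies the two commuting actions. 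Passing from $U_q(\Gl_n)$ to $U_q(\Sl_n)$ changes nothing here, since on $\bigwedge^k$ the extra Cartan generator of $U_q(\Gl_n)$ acts by a single scalar, so its image already lies inside the image of $U_q(\Sl_n)$. (Equivalently, one constructs the quantum exterior algebra of $\C^n\otimes\C^m$ directly as an $R$-matrix module algebra for $U_q(\Gl_n)\otimes U_q(\Gl_m)$; either construction is standard, and I would cite it --- e.g.\ \cite{cautis_kamnitzer_morrison_2014} --- rather than reprove it.)

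Next I would establish, as $U_q(\Sl_n)\otimes U_q(\Gl_m)$-modules, the decomposition
\begin{equation*}
  W\;\cong\;\bigoplus_{\substack{\lambda\,\vdash\, k\\ \lambda\subseteq (m^n)}} L^n_\lambda\boxtimes L^m_{\lambda^{\mathsf t}},
\end{equation*}
where $L^n_\lambda$ is the simple $U_q(\Sl_n)$-module of highest weight $\lambda$, $L^m_{\lambda^{\mathsf t}}$ the simple $U_q(\Gl_m)$-module of highest weight the conjugate partition $\lambda^{\mathsf t}$, and $\lambda$ runs over partitions of $k$ contained in an $n\times m$ rectangle. Because $q$ is not a root of unity, the relevant categories of finite-dimensional type-$1$ modules are semisimple and an object there is determined by its character; moreover the $U_q(\Gl_n)\times U_q(\Gl_m)$-character of $W$ equals that of the classical $\bigwedge^k(\C^n\otimes\C^m)$, since the quantum and classical $k$-th exterior powers have the same weight-space dimensions --- both have a basis of weight vectors indexed by the $k$-element subsets of $\{e_i\otimes f_j\}$, the weight attached to such a subset being the sum of the corresponding pairs $(\varepsilon_i,\delta_j)$. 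Hence the $U_q(\Gl_n)\otimes U_q(\Gl_m)$-decomposition of $W$ is forced to be the classical skew Howe one, namely the degree-$k$ part of the dual Cauchy identity $\prod_{i,j}(1+x_iy_j)=\sum_\lambda s_\lambda(x)\,s_{\lambda^{\mathsf t}}(y)$. Restriction $U_q(\Gl_n)\restr U_q(\Sl_n)$ keeps each $V^n_\lambda$ simple; and for partitions $\lambda\neq\mu$ of the same size $k$ the modules $L^n_\lambda$ and $L^n_\mu$ are non-isomorphic (equal $\Sl_n$-highest weights would force $\lambda-\mu\in\Z\cdot(1^n)$, impossible as $|\lambda|=|\mu|$), and likewise the $L^m_{\lambda^{\mathsf t}}$ are pairwise non-isomorphic. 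So the displayed sum is multiplicity-free on both sides.

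With this in hand the theorem is a routine double centralizer argument. Let $\mathcal A,\mathcal B\subseteq\End_{\Q(q)}(W)$ be the images of $U_q(\Sl_n)$ and of $U_q(\Gl_m)$. Since $W$ is a semisimple $U_q(\Sl_n)$-module whose isotypic components are exactly the summands above and the $L^n_\lambda$ are pairwise non-isomorphic absolutely simple modules, the density theorem gives $\mathcal A=\bigoplus_\lambda\End(L^n_\lambda)\otimes\id_{L^m_{\lambda^{\mathsf t}}}$, whose commutant in $\End(W)$ is $\bigoplus_\lambda\id_{L^n_\lambda}\otimes\End(L^m_{\lambda^{\mathsf t}})$; applying the identical reasoning to $U_q(\Gl_m)$ identifies this commutant with $\mathcal B$. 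Thus $\mathcal A'=\mathcal B$, and since $\mathcal A$ is semisimple, $\mathcal B'=\mathcal A''=\mathcal A$, which is the assertion.

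The genuinely load-bearing step is the identification of $W$ with the classical skew Howe bimodule --- concretely, the fact that passing to the quantum exterior power does not alter the $U_q(\Gl_n)\times U_q(\Gl_m)$-character, so the classical decomposition (dual Cauchy) transfers verbatim. The other delicate point is the set-up in the first paragraph, i.e.\ the quantised Kronecker embedding $U_q(\Gl_n)\otimes U_q(\Gl_m)\hookrightarrow U_q(\Gl_{nm})$ (or, equivalently, the module-algebra structure on the quantum exterior algebra of $\C^n\otimes\C^m$); this is by now standard and I would include it by citation. Everything after that --- semisimplicity and the centralizer bookkeeping --- is automatic. (A more diagrammatic proof, realising the $U_q(\Gl_m)$-action by divided powers of web generators and proving surjectivity onto $\End_{U_q(\Sl_n)}(W)$ inside the web category itself, is also possible, but I would favour the representation-theoretic route above.)
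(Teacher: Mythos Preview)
The paper does not prove this theorem at all: it is stated without proof as a known result (quantum skew Howe duality), and the discussion that follows cites \cite{cautis_kamnitzer_morrison_2014} for the surjection and its kernel. So you are supplying a proof where the paper supplies only a citation.

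Your argument is the standard one and is correct. The decomposition via the dual Cauchy identity, the observation that the quantum and classical exterior powers have the same weight-space dimensions (so that the classical decomposition transfers), the check that the summands are pairwise non-isomorphic on both sides when restricted to $U_q(\Sl_n)$ and $U_q(\Gl_m)$, and the ensuing double centralizer computation are all sound for $q$ not a root of unity. One small comment: the ``quantised Kronecker embedding'' $U_q(\Gl_n)\otimes U_q(\Gl_m)\hookrightarrow U_q(\Gl_{nm})$ is not really the way the commuting actions are set up in the literature---there is no honest Hopf-algebra map of that shape in general. The route actually taken in \cite{cautis_kamnitzer_morrison_2014} (and which you mention parenthetically) is to build the quantum exterior algebra $\bigwedge_q(\C^n\otimes\C^m)$ directly as a module algebra for both $U_q(\Gl_n)$ and $U_q(\Gl_m)$ and check the actions commute; I would lead with that construction and drop the Kronecker-embedding phrasing. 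With that adjustment your write-up is a clean, self-contained proof of a result the paper only quotes.
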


Now, 
\begin{align}
\bigwedge{}^{\hspace{-0.4em}t}_q \left(\C^n \otimes \C^m\right) &\simeq 
\bigwedge{}^{\hspace{-0.4em}t}_q (\C^n \oplus\cdots\oplus \C^n) \\&\simeq 
\bigoplus_{\substack{\underline{t}\,\vDash\, t\\\ell(\underline{t}) = m} }\left(\bigwedge{}^{\hspace{-0.4em}t_1}_q\C^n\otimes\bigwedge{}^{\hspace{-0.4em}t_2}_q\C^n\otimes\cdots\otimes\bigwedge{}^{\hspace{-0.4em}t_m}_q\C^n \right)=
\bigoplus_{\substack{\underline{t}\vDash t\\\ell(\underline{t}) = m} } V_{\underline t}\nonumber
\end{align}
as $U_q(\Sl_n)$ representations.
Here the direct sum runs over all compositions $\underline{t}$ of $t$ in $m$ parts\footnote{Strictly they should be in not more than $m$ parts, but we take them to be exactly $m$ parts by adding zeros.} with no part longer than $n$.
Thus,
\renewcommand{\arraystretch}{1.5}
\begin{equation}\label{eq:bigmatrix}
    \End_{U_q(\Sl_n)}\left(\bigwedge{}^{\hspace{-0.4em}t}_q \left(\C^n \otimes \C^m\right)\right)
    \simeq
   \left(
   \begin{array}{c|c|c|c}
   \End(V_{\underline{a}}) & \Hom(V_{\underline a},V_{\underline{b}}) & \cdots & \Hom(V_{\underline a},V_{\underline c})\\
   \hline
   \Hom(V_{\underline{b}},V_{\underline{a}}) & \End(V_{\underline b}) & \cdots & \vdots \\
   \hline
   \vdots & \vdots & \ddots &\vdots \\
   \hline
   \Hom(V_{\underline c}, V_{\underline a}) & \Hom(V_{\underline c},V_{\underline{b}}) & \cdots & \End(V_{\underline c})
   \end{array}
   \right),
\end{equation}
\renewcommand{\arraystretch}{1} where $\{\underline a, \underline b, \ldots, \underline c\}$ are all the appropriate partitions of $t$.

\Cref{thm:howe_comm} states that there is a surjection $\phi$ of $U_q(\Gl_m)$ onto $ \End_{U_q(\Sl_n)}\left(\bigwedge^k \left(\C^n \otimes \C^m\right)\right)$ and its kernel is computed in~\cite[\S4.4]{cautis_kamnitzer_morrison_2014}.


If we compose $\phi$ with projection onto the appropriate component in \cref{eq:bigmatrix} we obtain the families of surjections
\begin{equation*}
    \phi_{\underline a, \underline b}: U_q(\Gl_m) \to \Hom_{U_q(\Sl_n)}(V_{\underline a}, V_{\underline b})
\end{equation*}
and, composing with the quotient by $J_{<\lambda}$, the further surjection
\begin{equation*}
    \phi^{\not < \wt \underline b}_{\underline a, \underline b}: U_q(\Gl_m) \to \Hom^{\not < \wt \underline b}_{U_q(\Sl_n)}(V_{\underline a}, V_{\underline b}) = S(\underline a, \underline b)
\end{equation*}
where $\Hom^{\not < \wt \underline b}_{U_q(\Sl_n)}$ is the homomorphism space modulo $J_{<\lambda}$.

\subsection{Cellular forms}\label{subsec:cellular2}
Consider now a morphism $V_{\underline b}\to V_{\underline b}$ where $\underline b$ is of $U_q(\Sl_n)$ weight $\lambda$.
We know that there is a natural surjective map of algebras $f_\lambda: \End_{U_q(\Sl_n)}(V_{\underline b}) \twoheadrightarrow \C$.
This arises as the quotient of the algebra $\End_{U_q(\Sl_n)}(V_{\underline b})$ by its maximal ideal of co-dimension 1\footnote{We could, should we desire, extend this to a functor $f_\lambda$ on each $\Hom(V_{\underline a}, V_{\underline b})$ of the same weight $\lambda$ by choosing a family of neutral ladders and defining $f_\lambda$ to quotient out by all morphisms factoring through objects of weight less than $\lambda$ while still sending the neutral ladders to 1. However, we will not need this level of generality.}.
Recall that this gives rise to the cellular form on $S(\underline b, \lambda)$ by $\langle x, y\rangle = f_\lambda(\iota y \circ x)$.

On the other hand, consider the image of the torus of $U_q(\Gl_m)$, called $U_q(\mathfrak{h})$, in \cref{eq:bigmatrix}.
The element $h \in \mathfrak{h}$ gets sent to a diagonal matrix where the component on $\End(V_{\underline{b}})$ is simply $\underline{b}(h)\mathbb{1}_{V_{\underline{b}}}$, where $\underline b$ is considered a weight of $\Gl_m$.
Under the quotient by the maximal ideal, this is again sent to $\underline{b}(h)$ in $\C$.

Let $\pi:U_q(\Gl_m) \to U_q(\mathfrak{h})$ be the projection in the decomposition $$U_q(\Gl_m) = \big(\mathfrak{n}_-U_q(\Gl_m) + U_q(\Gl_m)\mathfrak{n}_+\big) \oplus U_q(\mathfrak h).$$
Suppose $\Psi \in \End(V_{\underline b})$ is the image of $g \in U_q(\Gl_m)$ under $\phi$ and the appropriate projection.
Then the image of $\pi(g)$ in $\End(V_{\underline b})$ is $\underline b(\pi(g))\mathbb{1}_{V_{\underline b}} $.
All other terms in $\Psi$, which is to say $\Psi - \pi(\Psi)$, factor through the maximal ideal of $\End(V_{\underline b})$, and so
$\underline b(\pi(g))\mathbb{1}_{V_{\underline b}} = f_\lambda(\Psi) \mathbb{1}_{V_{\underline b}}$ and thus $$\underline b(\pi(g)) = f_\lambda(\Psi).$$

There is an anti-automorphism, $\sigma$, of $U_q(\Gl_m)$ that fixes $U_q(\mathfrak{h})$ pointwise and sends $e_{i}$ to $f_{i}$ and vice-versa.
The $\underline b$-weighted bi-linear pairing is then the map
\begin{align*}
    (-,-)_{\underline b} : U_q(\Gl_m)\times U_q(\Gl_m) &\to \C\\
    (g_1, g_2) &\mapsto \underline b\left(\pi(\sigma(g_1) \cdot g_2)\right).
\end{align*}
This pairing is also invariant in that $(xg_1,g_2) = (g_1, \sigma(x) g_2)$ for any $x \in U_q(\Gl_m)$.
The above discussion shows that
\begin{equation}
    (g_1, g_2)_{\underline b} = \left\langle \Psi_{\underline b, \underline b}(g_1), \Psi_{\underline b, \underline b}(g_2)\right\rangle.
\end{equation}
In particular, if we are able to find preimages of the clasp basis $\mathbb{L}_t$ under $\phi$, we can evaluate the form $(-,-)_{\underline b}$ on these pre-images to determine the form $\langle -, - \rangle$.


\subsection{Extremal Projectors}
It turns out that it becomes somewhat simpler to use another algebra called the \textit{Taylor extension of $U_q(\Gl_m)$}.
\begin{definition}\cite[\S 6]{tolstoy_2001}
Let $\Phi^+$ be the positive roots as before, and for $\gamma \in \Phi^+$ let $\{c_i^{(\gamma)}\;:\;1 \le i< m\}$ be the set of coefficients obtained by decomposing $\gamma$  as a sum of simple roots.
The \emph{Taylor extension of $U_q(\Gl_m)$} which we write as $T_q(\Gl_m)$ is the vector space over the field of fractions of $U_q(\mathfrak h)$ consisting of formal sums of multiples of monomials
\begin{equation}
    e_{-\beta}^{n_\beta}\cdots e_{-\alpha}^{n_\alpha}e_{\alpha}^{m_\alpha}\cdots e_{\beta}^{m_\beta}.
\end{equation}
Here $e_\alpha, \ldots, e_\beta$ are the root vectors of the quantum Cartan-Weyl basis in the usual order, and, for each formal sum in $T_q(\Gl_m)$, there is a $N>0$ such that the non-negative integers $n_\alpha,\ldots, n_\beta$ and $m_\alpha, \ldots, m_\beta$ appearing in the sum satisfy the $m-1$ inequalities
\begin{equation}
    \left |\sum_{\gamma \in \Phi_+} (n_\gamma - m_\gamma) c_i^{(\gamma)} \right| \le N\quad\quad\quad i = 1,2,\ldots, m-1.
\end{equation}

\end{definition}
It is then the case that the Taylor extension of $U_q(\Gl_m)$ is an associative algebra, and in particular, acts on finite-dimensional $U_q(\Gl_m)$-modules in the natural way. One should note that even if the element of $T_q(\Gl_m)$ acting on finite-dimensional weight module $M$ has infinitely many terms, by weight arguments, all but a finite number of its monomials will act as zero on the module. Further it may  happen that an arbitrary element of $T_q(\Gl_m)$ involves a scalar coefficient that does not act naturally on the module. Indeed, the scalars are members of the field of fractions of $U_q(\mathfrak{h})$ which may have denominators that vanish when "acting" on certain weight spaces. However, it will turn out that the elements of $T_q(\Gl_m)$ we work with do act in well defined ways on the modules of interest.

We note that there is a natural embedding of $U_q(\Gl_m)$:
\begin{equation}
\begin{tikzcd}
U_q(\Gl_m) \arrow[rd, "\phi"] \arrow[r, hook] & T_q(\Gl_m) \arrow[d, "\phi"] \\
& \End_{U_q(\Sl_n)}(V_{\underline b}).
\end{tikzcd}
\end{equation}
Elements of the Taylor extension that act on $V_{\underline{b}}$ inherit a pairing from $\End_{U_q(\Sl_n)}(V_{\underline b})$ in the same way that $U_q(\Gl_m)$ does and in fact this is the natural extension of the pairing $(-,-)_{\underline b}$.

The following are important elements of $T_q(\Gl_m)$ that we will use to construct our clasped ladders.
\begin{definition}\cite[Thm. 1]{tolstoy_1990}\cite[Thm 5.14]{kalmykov_safronov_2022}
  The \emph{extremal projector} $P_q(\Gl_{m}) \in T_q(\Gl_m)$ is the unique idempotent satisfying
  $$ e_{\alpha_i} P_q(\Gl_m) = P_q(\Gl_m)e_{-\alpha_i} = 0.$$
  It satisfies $\sigma( P_q(\Gl_m)) = P_q(\Gl_m)$, where $\sigma$ is the natural extension of the anti-automorphism of $U_q(\Gl_m)$. It acts on all $U_q(\Gl_m)$-modules with generic weights on which the generators $e_{\alpha_i}$ act nilpotently, in particular $\End_{U_q(\Sl_n)}(V_{\underline b})$.
\end{definition}

\begin{lemma}
  The extremal projector $P_q(\Gl_m)$ has image under $\phi$ (and the appropriate projections) in $\End_{U_q(\Sl_n)}(V_{\underline b})$ equal to $\JW_{\underline b}$.
\end{lemma}
\begin{proof}
Under the action of $\phi$, we obtain an element of $ \End_{U_q(\Sl_n)}\left(\bigwedge{}^{\hspace{-0.4em}t}_q \left(\C^n \otimes \C^m\right)\right)$ that is idempotent, invariant under $\iota$, and that is killed on the left by all $e_{\alpha_i}$.

Further, from \cite[Eq. 7.3]{tolstoy_2001} we see that the extremal projectors are homogeneous. As such, $\phi(P_q(\Gl_m))$ is only non-zero on the diagonal when viewed as a matrix via \cref{eq:bigmatrix}. Namely, if $$f_{\underline b}:\End_{U_q(\Sl_n)}\left(\bigwedge{}^{\hspace{-0.4em}t}_q \left(\C^n \otimes \C^m\right)\right)\to\End_{U_q(\Sl_n)}\left(V_{\underline b}\right)$$ is the projection, then $\phi(P_q(\Gl_m)) = \sum_{\underline b}f_{\underline b}(\phi(P_q(\Gl_m)))$. Indeed, if $V_{\underline a}$ and $V_{\underline b}$ are of different weight, then the image of $\phi(P_q(\Gl_m))$ on $\End_{U_q(\Sl_n)}(V_{\underline a}, V_{\underline b})$ must be zero. Now, if $\underline a$ and $\underline b$ have the same weight (i.e. they are permutations of each other) and correspond to partition $\underline{\mu}$, we say they lie in the same block. Each such block is of the form $V_{\underline \mu}^{\oplus n_{\underline \mu}}$ for some multiplicity $n_{\underline \mu}$, and we can pick a basis such that the action of $\phi(P_q(\Gl_m))$ is block diagonal.

Now the relevant properties of the extremal projector carry over to these images. Each $f_{\underline b}(\phi(P_q(\Gl_m)))$ is invariant under $\iota$, is idempotent, and killed by the action of $e_{\alpha_i}$.  Moreover, in the construction of the extremal element in~\cite{tolstoy_1990} we see that $1 \in U_q(\Gl_m)$ appears with coefficient 1 and so by \cref{thm:jw_exists_unique} this is a clasp.

Thus the image under $\phi$ must be the sum of \emph{all} the clasps.
In particular, its projection onto some $\End(V_{\underline{b}})$ must be the clasp $\JW_{\underline{b}}$.
\end{proof}

Armed with extremal projectors, we next turn to distinguished elements of $T_q(\Gl_m)$ known as raising- and lowering operators.
They are so-named as they are an orthonormal set of operators that move weight vectors up and down between the weights of Weyl modules.

First however, we pause to set some notation for tableaux.

\subsection{Notation for Tableaux}
In what follows, we will be manipulating row- and column-semi-standard tableaux, and we set out the required notation here.

Let $T$ be a (row) semi-standard tableaux and 
recall that semi-standard tableaux describe paths in the restricted Young graph. Let $T^{(i)}$ be the partition described by all boxes in $T$ with entries at most $i$.
\begin{example}   
If
\begin{equation} T = \vcenter{\hbox{\young(1113,224,33,55)}}\end{equation}
Then $T^{(1)} = (3)$, $T^{(2)} = (3,2)$, $T^{(3)} = (4,2,2)$, $T^{(4)} = (4,3,2)$ and $T^{(5)} = (4,3,2,2)$. We can also read out the partition pieces, such as $T^{(2)}_1 = 3$ and $T^{(4)}_3=2$.
\end{example}


\subsection{Lowering Operators}
These operators were introduced by Nagel and Moshinsky~\cite{nagel_moshinsky_1965} and exposited by Carter at the Arcata conference~\cite{carter_1986}.
In these papers they were defined for $U(\Gl_m)$, but in 1990, Tolstoy determined the appropriate elements for $U_q(\Gl_m)$~\cite{tolstoy_1990}, which we present below in the guise of a basis of Weyl modules.
This basis is of the Gelfand-Tsetlin form: obtained through the canonical inclusion of algebras
$$U_q(\Gl_1)\subset U_q(\Gl_1) \subset \cdots \subset U_q(\Gl_m)$$
and the corresponding decomposition of the restriction of Weyl modules for $U_q(\Gl_m)$ into the one-dimensional summands as $U_q(\Gl_1)$ modules.
Seemingly separately, extending the work of Carter, Brundan presented un-normalised lowering operators for $U_q(\Gl_n)$~\cite{brundan_1998} in 1998.

In this definition, following~\cite{tolstoy_1990} closely, the elements $e_{j,i}$ of $U_q(\Gl_m)$ for $j > i$ are inductively defined by
\begin{equation}\label{eq:eji}
  e_{j, i} = [e_{j,i+1}, e_{i+i, i}]_q
\end{equation}
where the $e_{i+1,i}$ are the usual generators of $U^-_q(\Gl_m)$ and $[a, b]_q = ab - (-1)^{\deg a \deg b} q^{(a,b)} ba$. Here the exponent of $q$ is the scalar product between roots $a$ and $b$.
\begin{definition}\label{lem:g-t}
  Fix $m$ and let $\lambda$ be a partition of $m$. In the Weyl module of $U_q(\Gl_m)$ of highest weight $\lambda$, let $|\lambda_m\rangle$ be a highest weight vector of unit paring with itself.
  Then, for each semi-standard tableau $T$ of weight $\lambda$, define elements of the Weyl module $|T\rangle$ as
  $$|T\rangle = F_-\left(T^{(1)}; T^{(2)}\right)F_-\left(T^{(2)}; T^{(3)}\right)\cdots F_-\left(T^{(m-1)}; T^{(m)}\right)|\lambda_m\rangle$$
  where 
  \begin{equation}\label{eq:fminus}
    F_-(T^{(j-1)}; T^{(j)}) = \mathcal{N}\left(T^{(j-1)}; T^{(j)}\right)^{-1} P_q(\Gl_{ j-1}) \prod_{i = 1}^{j-1}(e_{j,i})^{T^{(j)}_{i} - T^{(j-1)}_i},
  \end{equation}
  and the constants $\mathcal{N}$ are given by
  \begin{multline} \label{eq:tolstoy_heavy}
    \mathcal{N}\left(T^{(j-1)}; T^{(j)}\right) = 
    \left\{
    \prod_{k = 1}^{j-1} [T^{(j)}_{k}-T^{(j-1)}_{k}]!
    \prod_{1 \le i<k\le j-1}\frac{[T^{(j-1)}_{i}-i - T^{(j-1)}_{k}+k]!}{[T^{(j)}_{i}-i - T^{(j-1)}_{k}+k]!} \times \right. \\
    \left.\prod_{1 \le i < k \le j}\frac{[T^{(j)}_{i}-i - T^{(j)}_{k} + k-1]!}{[T^{(j-1)}_{i}-i - T^{(j)}_{k} + k-1]!}
    \right\}^{\frac 12}.
  \end{multline}
\end{definition}
\begin{lemma}\cite[Thm. 2]{tolstoy_1990}\cite[Eq. 4.11]{quesne_1993} The set $\{|T\rangle\}$ as given in \cref{lem:g-t} is a basis of the Weyl module which is orthonormal with respect to the pairing.

\end{lemma}
Note that these are \textit{not} divided powers $e_{ij}{}^{(T^{(j)}_{i} - T^{(j-1)})}$ but regular generators $e_{ij}{}^{T^{(j)}_{i} - T^{(j-1)}}$.
Additionally note that if $T$ is the unique semi-standard tableau with both weight and shape $\lambda$ then $|T\rangle = |\lambda_n\rangle$.

We've presented \cref{eq:tolstoy_heavy} in form given in~\cite{tolstoy_1990} (but with indices $j$ and $k$ swapped), to aide in reconciling the lemmata, but now use our notation to simplify the form of $\mathcal{N}$.
If $T$ has $n$ rows, then its maximal value, $s$, is at least  $n$.
Let
\begin{equation*}
N_{i,j}^T = \text{number of entries $j$ in row $i$ of $T$} = T^{(j)}_i - T^{(j-1)}_i.
\end{equation*}
We will also write   
\begin{align*}
     M_{i,k}^j &= (N_{k,j+1} + \cdots + N_{k,s}) - (N_{i,j+1} + \cdots + N_{i,s}).
\end{align*}
Note that $M_{i,k}^j = T^{(s)}_k - T^{(j)}_k - T^{(s)}_i + T^{(j)}_i$ and ovserve that it can also be expressed as a difference of two axial distances. Recall from \cref{def:axial} that $c^\lambda_{i,k} = \lambda_i - \lambda_k + i - k$ so  $M_{i,k}^j=c^{T^{(s)}}_{ki} -c^{T^{(j)}}_{ki} $.
\begin{example}
Let
\begin{equation} T = \vcenter{\hbox{\young(11236,2244,345,66)}}\end{equation}
so that $s = 6$.
Then, where omitted entries in the table denote a value of 0 and grey boxes denote entries that can never be non-zero for any $T$, the values of $N^T_{i,j}$ are
\begin{center}
\begin{tabular}{|l|llllll|} 
\hline
\diagbox{i}{j} & 1                                    & 2                                    & 3                                    & 4 & 5 & 6  \\ 
\hline
1              & 2                                    & 1                                    & 1                                    &   &   &  1  \\
2              & {\cellcolor[rgb]{0.871,0.867,0.855}} & 2                                    &                                      & 2 &   &    \\
3              & {\cellcolor[rgb]{0.871,0.867,0.855}} & {\cellcolor[rgb]{0.871,0.867,0.855}} & 1                                    & 1 & 1 &    \\
4              & {\cellcolor[rgb]{0.871,0.867,0.855}} & {\cellcolor[rgb]{0.871,0.867,0.855}} & {\cellcolor[rgb]{0.871,0.867,0.855}} &   &   & 2  \\
\hline
\end{tabular}.
\end{center}
The values of $(N_{i,j+1} + \cdots + N_{i,s})$ are
\begin{center}
\begin{tabular}{|l|llllll|} 
\hline
\diagbox{i}{j} & 0 & 1 & 2 & 3 & 4 & 5  \\ 
\hline
1              & 5 & 3 & 2 & 1 & 1 &  1 \\
2              & 4 & 4 & 2 & 2 &   &    \\
3              & 3 & 3 & 3 & 2 & 1 &    \\
4              & 2 & 2 & 2 & 2 & 2 & 2  \\
\hline
\end{tabular},
\end{center}
and the values of $M^{j}_{i,k}$ are (including $j=0$)
\begin{center}
\begin{tabular}{ccc}
\shortstack[c]{
$M^{0}_{i,k}$\\
\begin{tabular}{|l|llll|} 
\hline
\diagbox{i}{k} & 1 & 2 & 3 & 4 \\ 
\hline
1              & & -1& -2& -3 \\
2              & 1&  & -1& -2\\
3              & 2 & 1 &  & -1\\
4              & 3 & 2 & 1 &  \\
\hline
\end{tabular}\vspace{1em}}&
\shortstack[c]{
$M^{1}_{i,k}$\\
\begin{tabular}{|l|llll|} 
\hline
\diagbox{i}{k} & 1 & 2 & 3 & 4 \\ 
\hline
1              &  & 1& & -1 \\
2              & -1 &  & -1& -2\\
3              &  & 1 &  & -1\\
4              & 1 & 2 & 1 &  \\
\hline
\end{tabular}\vspace{1em}}&
\shortstack[c]{
$M^{2}_{i,k}$\\
\begin{tabular}{|l|llll|} 
\hline
\diagbox{i}{k} & 1 & 2 & 3 & 4 \\ 
\hline
1              &  & & 1&  \\
2              &  &  & 1& \\
3              & -1 & -1 &  & -1\\
4              &  &  & 1 &  \\
\hline
\end{tabular}\vspace{1em}}
\\
\shortstack[c]{
$M^{3}_{i,k}$\\
\begin{tabular}{|l|llll|} 
\hline
\diagbox{i}{k} & 1 & 2 & 3 & 4 \\ 
\hline
1              &  & 1& 1& 1 \\
2              & -1 &  &  & \\
3              & -1 &  &  & \\
4              & -1 &  &  &  \\
\hline
\end{tabular}}&
\shortstack[c]{
$M^{4}_{i,k}$\\
\begin{tabular}{|l|llll|} 
\hline
\diagbox{i}{k} & 1 & 2 & 3 & 4 \\ 
\hline
1              &  & -1 & & 1 \\
2              & 1 &  & 1 & 2\\
3              &  & -1 & &1\\
4              & -1 & -2 & -1 &  \\
\hline
\end{tabular}}&
\shortstack[c]{
$M^{5}_{i,k}$\\
\begin{tabular}{|l|llll|} 
\hline
\diagbox{i}{k} & 1 & 2 & 3 & 4 \\ 
\hline
1              &  & -1 & -1 & 1 \\
2              & 1 &  &  & 2\\
3              & 1 & &  & 2\\
4              & -1 & -2 & -2 &  \\
\hline
\end{tabular}}
\end{tabular}
\end{center}
Notice that $M^j_{i,k} = - M^j_{k,i}$.
\end{example}

Now $T^{(j)}_i = \lambda_i - (N^T_{i,j+1} + \cdots +N^T_{i,s})$ and $T^{(j)}_{k} - T^{(j-1)}_k = N_{k,j}$ so that $T^{(j)}_i - i - T^{(j)}_k + k = c_{ik} + M_{i,k}^j$. We can then write
\begin{align}\label{eq:tolstoy_rr}
&    \mathcal{N}\left(T^{(j-1)}; T^{(j)}\right)\\
    =&\nonumber\left\{
    \prod_{k = 1}^{j-1} [N_{k,j}]!
    \prod_{1 \le i<k\le j-1}\frac
    {[c_{i,k}+M_{i,k}^{j-1}]!}
    {[c_{i,k}+M_{i,k}^{j-1}+N_{i,j}]!}
    \prod_{1 \le i < k \le j}\frac
    {[c_{i,k} + M_{i,k}^{j-1} -1-N_{k,j}+N_{i,j}]!}
    {[c_{i,k} + M_{i,k}^{j-1}-1-N_{k,j}]!}
    \right\}^{\frac 12}\\
    =&\nonumber\left\{
    \prod_{k = 1}^{j-1} [N_{k,j}]!
    \prod_{1 \le i<k\le j-1}\frac
    {[c_{i,k}+ M_{i,k}^{j-1}+N_{i,j}-N_{i,j}]!}
    {[c_{i,k}+ M_{i,k}^{j-1}+N_{i,j}]!}
    \prod_{1 \le i < k \le j}\frac
    {[c_{i,k} + M_{i,k}^{j} -1]!}
    {[c_{i,k} + M_{i,k}^{j} -1-N_{i,j}]!}
    \right\}^{\frac 12}\\
    =&\nonumber \prod_{k = 1}^{j-1} [N_{k,j}]! \left\{
    {
      \prod_{1 \le i < k \le j}\qbinom{c_{i,k} + M_{i,k}^{j} - 1 }{N_{i,j}}
    }\Big/{
      \prod_{1 \le i<k < j}\qbinom{c_{i,k} + M_{i,k}^{j-1}+N_{i,j}}{N_{i,j}}
    }
    \right\}^{\frac 12}
  \end{align}
Note that as $c_{i,k} + M^j_{i,k} = c^{T^{(j)}}_{i,j}$ which is at least $1$ when $i<k$, all of the values in the manipulation above are strictly positive.

As such we can rewrite \cref{eq:fminus} as
\begin{equation}
F_-(T^{(j-1)}; T^{(j)}) = \mathcal{N}'(T^{(j-1)};T^{(j)})P_q(\Gl_{ j-1}) \prod_{i = 1}^{j-1}e_{j,i}^{(N_{ij})}
\end{equation}
where
\begin{equation}\label{eq:tolstoy_rr2}
\mathcal{N}'(T^{(j-1)};T^{(j)})=
\left\{
    \frac{
      \prod_{1 \le i < k \le j}\qbinom{c_{i,k} + M_{i,k}^{j} - 1 }{N_{i,j}}
    }{
      \prod_{1 \le i<k < j}\qbinom{c_{i,k} + M_{i,k}^{j-1}+N_{i,j}}{N_{i,j}}
    }
    \right\}^{-\frac 12}
\end{equation}
and
\begin{equation}
  e_{j,i}^{(N_{i,j})} = \frac{(e_{j,i})^{N_{i,j}}}{[N_{i,j}]!}
\end{equation}
We have at last replaced the normal powers with divided powers by absorbing the factors of $[N_{kj}]!$.

It is a routine calculation to show that the $P_q(\Gl_{ j-1})\prod_{i = 1}^{j-1}e_{j\,i}^{(N_{ij})}$ are exactly the pre-images of $(\JW \otimes \id) \circ (\id \otimes E_\mu)$ where $\mu$ is the weight described by the bitstring of 1s and 0s with a 1 in the $i$th position if there is a number $j$ in the $i$-th column of $T$. The key is repeated application of relations \cite[Eq 2.9, Eq 2.13]{cautis_kamnitzer_morrison_2014} and \cite[Lemma 2.9]{elias_2015}. The first states that
\begin{equation}
  \vcenter{\hbox{\begin{tikzpicture}[scale=0.5]
    \draw[very thick] (0, 1.5) -- (4, 1.5);
    \draw[very thick] (0, 0) -- (4, 0);
    \draw[very thick] (1, 1.5) -- (2, 0);
    \draw[very thick] (2, 1.5) -- (3, 0);
    \node at (1.1, 0.5) {\smaller$r$};
    \node at (2.9, 1.0) {\smaller$s$};
  \end{tikzpicture}}} 
  = \qbinom{r + s}{r}\;
  \vcenter{\hbox{\begin{tikzpicture}[scale=0.5]
    \draw[very thick] (0, 1.5) -- (3, 1.5);
    \draw[very thick] (0, 0) -- (3, 0);
    \draw[very thick] (1, 1.5) -- (2, 0);
    \node at (2.3, 1.0) {\smaller$r+s$};
  \end{tikzpicture}}}.
\end{equation}
A rung with label $r$ on the $i$-th string here is the image of the divided power $(e_{i+1,i})^{(r)} = \frac{(e_{i+1,i})^{r}}{[r]!}$ from which this follows immediately.
The second relation states that
\begin{equation}
  \vcenter{\hbox{\begin{tikzpicture}[scale=0.5]
    \draw[very thick] (0.5, 3) -- (4, 3);
    \draw[very thick] (0.5, 1.5) -- (4, 1.5);
    \draw[very thick] (0.5, 0) -- (4, 0);
    \draw[very thick] (2.5, 1.5) -- (3.5, 0);
    \draw[very thick] (1, 3) -- (2, 1.5);
    \draw[very thick] (2, 3) -- (3, 1.5);
    \node at (1.2, 2) {\tiny$1$};
    \node at (2.6, 0.5) {\tiny$1$};
    \node at (2.8, 2.5) {\tiny$1$};
  \end{tikzpicture}}} 
  =
  \vcenter{\hbox{\begin{tikzpicture}[scale=0.5]
    \draw[very thick] (0.5, 3) -- (4, 3);
    \draw[very thick] (0.5, 1.5) -- (4, 1.5);
    \draw[very thick] (0.5, 0) -- (4, 0);
    \draw[very thick] (2.5, 1.5) -- (3.5, 0);
    \draw[very thick] (1, 3) -- (2, 1.5);
    \node at (1.2, 2) {\tiny$2$};
    \node at (2.5, 0.5) {\tiny$1$};
  \end{tikzpicture}}} + 
  \vcenter{\hbox{\begin{tikzpicture}[scale=0.5]
    \draw[very thick] (0.5, 3) -- (4, 3);
    \draw[very thick] (0.5, 1.5) -- (4, 1.5);
    \draw[very thick] (0.5, 0) -- (4, 0);
    \draw[very thick] (2.5, 1.5) -- (3.5, 0);
    \draw[very thick] (2, 3) -- (3, 1.5);
    \node at (2.5, 0.5) {\tiny$1$};
    \node at (2.8, 2.5) {\tiny$2$};
  \end{tikzpicture}}}
  .
\end{equation}
The final relation claims
\begin{equation}\label{eq:run_swapping}
  \vcenter{\hbox{\begin{tikzpicture}[scale=0.5]
    \draw[very thick] (0.5, 3) -- (4, 3);
    \draw[very thick] (0.5, 1.5) -- (4, 1.5);
    \draw[very thick] (0.5, 0) -- (4, 0);
    \draw[very thick] (1, 3) -- (2, 1.5);
    \draw[very thick] (1.5, 1.5) -- (2.5, 0);
    \draw[very thick] (2.5, 1.5) -- (3.5, 0);
    \node at (2.2, 2.25) {\smaller$s$};
    \node at (1.7, 0.5) {\smaller$r$};
    \node at (3.3, 1) {\smaller$t$};
  \end{tikzpicture}}} 
  = \sum_{0 \le m \le \min\{s, t\}} \qbinom{r + t - s}{t-m}\;
  \vcenter{\hbox{\begin{tikzpicture}[scale=0.5]
    \draw[very thick] (0.5, 3) -- (4, 3);
    \draw[very thick] (0.5, 1.5) -- (4, 1.5);
    \draw[very thick] (0.5, 0) -- (4, 0);
    \draw[very thick] (2.5, 1.5) -- (3.5, 0);
    \draw[very thick] (1, 3) -- (2, 1.5);
    \draw[very thick] (2, 3) -- (3, 1.5);
    \node at (1.2, 2) {\smaller$m$};
    \node at (2.2, 0.5) {\smaller$r+t$};
    \node at (3.4, 2.5) {\smaller$s-m$};
  \end{tikzpicture}}}  .
\end{equation}
When pre-composed by the extremal projector, \cref{eq:run_swapping} becomes
\begin{equation}\label{eq:combinfork}
  \vcenter{\hbox{\begin{tikzpicture}[scale=0.5]
    \draw[very thick, fill=purple!30!white] (0,1) rectangle (0.5,3.5);
    \draw[very thick] (0.5, 3) -- (4, 3);
    \draw[very thick] (-.5, 3) -- (0, 3);
    \draw[very thick] (0.5, 1.5) -- (4, 1.5);
    \draw[very thick] (-.5, 1.5) -- (0, 1.5);
    \draw[very thick] (-.5, 0) -- (4, 0);
    \draw[very thick] (1, 3) -- (2, 1.5);
    \draw[very thick] (1.5, 1.5) -- (2.5, 0);
    \draw[very thick] (2.5, 1.5) -- (3.5, 0);
    \node at (2, 2.5) {\smaller$s$};
    \node at (1.7, 0.5) {\smaller$r$};
    \node at (3.3, 1) {\smaller$t$};
  \end{tikzpicture}}} 
  = \qbinom{r+t-s}{t}
  \vcenter{\hbox{\begin{tikzpicture}[scale=0.5]
    \draw[very thick, fill=purple!30!white] (0,1) rectangle (0.5,3.5);
    \draw[very thick] (0.5, 3) -- (3, 3);
    \draw[very thick] (-.5, 3) -- (0, 3);
    \draw[very thick] (0.5, 1.5) -- (3, 1.5);
    \draw[very thick] (-.5, 1.5) -- (0, 1.5);
    \draw[very thick] (-.5, 0) -- (3, 0);
    \draw[very thick] (1, 3) -- (2, 1.5);
    \draw[very thick] (1.5, 1.5) -- (2.5, 0);
    \node at (2, 2.5) {\smaller$s$};
    \node at (1.3, 0.5) {\smaller$r + t$};
  \end{tikzpicture}}}   .
\end{equation}
We can also see that if $s > t$ then
\begin{equation}\label{eq:no_disorder}
  \vcenter{\hbox{\begin{tikzpicture}[scale=0.5]
    \draw[very thick, fill=purple!30!white] (0,1) rectangle (0.5,3.5);
    \draw[very thick] (0.5, 3) -- (3, 3);
    \draw[very thick] (-.5, 3) -- (0, 3);
    \draw[very thick] (0.5, 1.5) -- (3, 1.5);
    \draw[very thick] (-.5, 1.5) -- (0, 1.5);
    \draw[very thick] (-.5, 0) -- (3, 0);
    \draw[very thick] (1, 3) -- (2, 1.5);
    \draw[very thick] (1.5, 1.5) -- (2.5, 0);
    \node at (2, 2.5) {\smaller$s$};
    \node at (1.7, 0.5) {\smaller$t$};
  \end{tikzpicture}}} 
  = 0,
\end{equation}
as it is the only term of \cref{eq:run_swapping} with $r=0$ left, after precomposition by the extemal projector.

From these relations, one can deduce, for example, that
\begin{equation}\label{eq:first_rung}
  P_q(\Gl_{j-1})e_{j\,i}^{(N_{ij})} =
     \vcenter{\hbox{\begin{tikzpicture}[scale=0.5]
    \draw[very thick, fill=purple!30!white] (0,-.5) rectangle (0.5,6.5);
    \draw[very thick] (0.5, 6) -- (3, 6);\draw[very thick] (-0.5, 6) -- (0, 6);
    \node at (1.5, 5.4) {\tiny$\vdots$};\node at (-0.2, 5.4) {\tiny$\vdots$};
    \draw[very thick] (0.5, 4.5) -- (3, 4.5);\draw[very thick] (-0.5, 4.5) -- (0, 4.5);
    \draw[very thick] (0.5, 3.5) -- (3, 3.5);\draw[very thick] (-0.5, 3.5) -- (0, 3.5);
    \draw[very thick] (0.5, 2.5) -- (3, 2.5);\draw[very thick] (-0.5, 2.5) -- (0, 2.5);
    \node at (1.5, 1.9) {\tiny$\vdots$};\node at (-0.2, 1.9) {\tiny$\vdots$};
    \draw[very thick] (0.5, 1) -- (3, 1);\draw[very thick] (-0.5, 1) -- (0, 1);
    \draw[very thick] (0.5, 0) -- (3, 0);\draw[very thick] (-0.5, 0) -- (0, 0);
    \draw[very thick] (-0.5, -1) -- (3, -1);

    \draw[very thick] (1, 3.5) -- (1.5, 2.5);
    \draw[very thick, path fading=south] (1, 2.5) -- (1.25, 2.0);
    \draw[very thick, path fading=north] (1, 1.5) -- (1.25, 1);
    \draw[very thick] (1, 1) -- (1.5, 0);
    \draw[very thick] (1, 0) -- (1.5, -1);

    \node at (2, 3) {\tiny$N_{ij}$};
    \node at (2, 0.5) {\tiny$N_{ij}$};
    \node at (2, -.5) {\tiny$N_{ij}$};
  \end{tikzpicture}}}.
\end{equation}
Here there are rungs between successive strands from the $i$-th to the $j$-th inclusive (the clasp is on strands $1$ through to $j-1$). We demonstrate this by example in \cref{eg:pq}.

The following example also demonstrates that repeated application of the above relations yield that the form of $P_q(\Gl_{ j-1})\prod_{i = 1}^{j-1}e_{j\,i}^{(N_{ij})}$ must be that of \cref{eq:e_mu} with a clasp. All that remains is the accounting to ensure the rungs have the correct multiplicities and that the combinatorial factors cancel. 

\begin{example}\label{eg:pq}
    Suppose that
    \begin{equation}
        T = \vcenter{\hbox{\young(1133466<10><12>,2344578<12>,4455699,55667<11>,66778<11>,77889<12>,8999<12>,<10><10><11><11><12>)}}
    \end{equation}
    and $j=9$. We will derive $P_q(\Gl_8) \prod_{i = 1}^{8}(e_{9\,i})^{(N_{i,9})}$, part of $F_-(T^{(8)};T^{(9)})$. The only $i$ for which $N_{9,i}$ are non-zero are those that index rows of $T$ with $9$ in them:
    \begin{align}
        N^T_{3,9} = T^{(9)}_3 - T^{(8)}_3 &=2\\\nonumber
        N^T_{6,9} = T^{(9)}_6 - T^{(8)}_6 &=1\\\nonumber
        N^T_{7,9} = T^{(9)}_7 - T^{(8)}_7 &=3.
    \end{align}
    The remainder are zero. Thus we are interested in
    \begin{equation}
        P_q(\Gl_8)(e_{9\,3})^{(2)}e_{9\,6}(e_{9\,7})^{(3)}.
    \end{equation}

    Now, $e_{9\,3} = [e_{9\,4}, e_{4\,3}]_q = e_{9\,4}e_{4\,3} + q^{-1}e_{4\,3}e_{9\,4}$ which translates into diagrams as
    \begin{align}
  P_q(\Gl_{8})e_{9\,3} &=
     \vcenter{\hbox{\begin{tikzpicture}[scale=0.5]
    \foreach \i in {0,...,8}{
      \draw[very thick] (-0.5,0.5*\i) -- (3.5,0.5*\i);
    }
    \draw[very thick, fill=purple!30!white] (0,.25) rectangle (0.5,4.25);
    \draw[very thick, fill=white] (1,-.25) rectangle (2.5,2.75);
    \node at (1.75,1.25) {$e_{9\,4}$};
    \draw[very thick] (2.75,3) -- (3,2.5);
  \end{tikzpicture}}}
  + q^{-1}\vcenter{\hbox{\begin{tikzpicture}[scale=0.5]
    \foreach \i in {0,...,8}{
      \draw[very thick] (-0.5,0.5*\i) -- (3.5,0.5*\i);
    }
    \draw[very thick, fill=purple!30!white] (0,.25) rectangle (0.5,4.25);
    \draw[very thick] (0.75,3) -- (1,2.5);
    \draw[very thick, fill=white] (1.5,-.25) rectangle (3,2.75);
    \node at (2.25,1.25) {$e_{9\,4}$};
  \end{tikzpicture}}} \\\nonumber&=
  \vcenter{\hbox{\begin{tikzpicture}[scale=0.5]
    \foreach \i in {0,...,8}{
      \draw[very thick] (-0.5,0.5*\i) -- (3.5,0.5*\i);
    }
    \draw[very thick, fill=purple!30!white] (0,.25) rectangle (0.5,4.25);
    \draw[very thick, fill=white] (1,-.25) rectangle (2.5,2.75);
    \node at (1.75,1.25) {$e_{9\,4}$};
    \draw[very thick] (2.75,3) -- (3,2.5);
  \end{tikzpicture}}}
  .
    \end{align}
    from which we can induct to see that
    \begin{equation}
  P_q(\Gl_{8})e_{9\,3} =
     \vcenter{\hbox{\begin{tikzpicture}[scale=0.5]
    \foreach \i in {0,...,8}{
      \draw[very thick] (-0.5,0.5*\i) -- (1.25,0.5*\i);
    }
    \draw[very thick, fill=purple!30!white] (0,.25) rectangle (0.5,4.25);
    \foreach \i in {0,...,5}{
      \draw[very thick] (0.75,0.5*\i+0.5) -- (1,0.5*\i);
    }
  \end{tikzpicture}}}
  \end{equation}
  in agreement with \cref{eq:first_rung}. Now, note that, for example,
  \begin{equation}
      \vcenter{\hbox{\begin{tikzpicture}[scale=0.5]
    \foreach \i in {0,...,8}{
      \draw[very thick] (-0.5,0.5*\i) -- (1.5,0.5*\i);
    }
    \draw[very thick, fill=purple!30!white] (0,.25) rectangle (0.5,4.25);
    \foreach \i in {0,...,5}{
      \draw[very thick] (0.75,0.5*\i+0.5) -- (1,0.5*\i);
    }
    \draw[very thick] (1.125,0.5*2+0.5) -- (1.375,0.5*2);
  \end{tikzpicture}}}=
  \vcenter{\hbox{\begin{tikzpicture}[scale=0.5]
    \foreach \i in {0,...,8}{
      \draw[very thick] (-0.5,0.5*\i) -- (1.5,0.5*\i);
    }
    \draw[very thick, fill=purple!30!white] (0,.25) rectangle (0.5,4.25);
    \foreach \i in {0,...,5}{
      \draw[very thick] (0.75,0.5*\i+0.5) -- (1,0.5*\i);
    }
    \node at (1.25,1.25) {\tiny$2$};
  \end{tikzpicture}}}
  +
  \vcenter{\hbox{\begin{tikzpicture}[scale=0.5]
    \foreach \i in {0,...,8}{
      \draw[very thick] (-0.5,0.5*\i) -- (1.5,0.5*\i);
    }
    \draw[very thick, fill=purple!30!white] (0,.25) rectangle (0.5,4.25);
    \foreach \i in {0,1,3,4,5}{
      \draw[very thick] (0.75,0.5*\i+0.5) -- (1,0.5*\i);
    }
    \draw[very thick] (1.125,0.5*2+0.5) -- (1.375,0.5*2);
    \node at (1.675,1.25) {\tiny$2$};
  \end{tikzpicture}}}=
  \vcenter{\hbox{\begin{tikzpicture}[scale=0.5]
    \foreach \i in {0,...,8}{
      \draw[very thick] (-0.5,0.5*\i) -- (1.5,0.5*\i);
    }
    \draw[very thick, fill=purple!30!white] (0,.25) rectangle (0.5,4.25);
    \foreach \i in {0,2,3,4,5}{
      \draw[very thick] (0.75,0.5*\i+0.5) -- (1,0.5*\i);
    }
    \draw[very thick] (1.125,0.5*1+0.5) -- (1.375,0.5*1);
    \node at (1.175,1.25) {\tiny$2$};
  \end{tikzpicture}}} - 
  \vcenter{\hbox{\begin{tikzpicture}[scale=0.5]
    \foreach \i in {0,...,8}{
      \draw[very thick] (-0.5,0.5*\i) -- (1.5,0.5*\i);
    }
    \draw[very thick, fill=purple!30!white] (0,.25) rectangle (0.5,4.25);
    \foreach \i in {0,...,5}{
      \draw[very thick] (1.125,0.5*\i+0.5) -- (1.375,0.5*\i);
    }
    \draw[very thick] (0.75,0.5*2+0.5) -- (1.,0.5*2);
  \end{tikzpicture}}}=0.
  \end{equation}
  Similarly, using \cref{eq:combinfork}, 
  \begin{equation}
  \vcenter{\hbox{\begin{tikzpicture}[scale=0.5]
    \foreach \i in {0,...,8}{
      \draw[very thick] (-0.5,0.5*\i) -- (1.5,0.5*\i);
    }
    \draw[very thick, fill=purple!30!white] (0,.25) rectangle (0.5,4.25);
    \foreach \i in {0,...,5}{
      \draw[very thick] (0.75,0.5*\i+0.5) -- (1,0.5*\i);
    }
    \draw[very thick] (1.125,0.5*4+0.5) -- (1.375,0.5*4);
    \node at (1.175,0.25) {\tiny$2$};
    \node at (1.175,0.75) {\tiny$2$};
    \node at (1.175,1.25) {\tiny$2$};
  \end{tikzpicture}}}=0\quad\quad\text{and}\quad\quad
  \vcenter{\hbox{\begin{tikzpicture}[scale=0.5]
    \foreach \i in {0,...,8}{
      \draw[very thick] (-0.5,0.5*\i) -- (1.5,0.5*\i);
    }
    \draw[very thick, fill=purple!30!white] (0,.25) rectangle (0.5,4.25);
    \foreach \i in {0,...,5}{
      \draw[very thick] (0.75,0.5*\i+0.5) -- (1,0.5*\i);
    }
    \draw[very thick] (1.125,0.5*0+0.5) -- (1.375,0.0*4);
  \end{tikzpicture}}}=\vcenter{\hbox{\begin{tikzpicture}[scale=0.5]
    \foreach \i in {0,...,8}{
      \draw[very thick] (-0.5,0.5*\i) -- (1.5,0.5*\i);
    }
    \draw[very thick, fill=purple!30!white] (0,.25) rectangle (0.5,4.25);
    \foreach \i in {0,...,5}{
      \draw[very thick] (0.75,0.5*\i+0.5) -- (1,0.5*\i);
    }
    \node at (1.175,0.25) {\tiny$2$};
  \end{tikzpicture}}}\quad\quad\text{and}\quad\quad
  \vcenter{\hbox{\begin{tikzpicture}[scale=0.5]
    \foreach \i in {0,...,8}{
      \draw[very thick] (-0.5,0.5*\i) -- (1.5,0.5*\i);
    }
    \draw[very thick, fill=purple!30!white] (0,.25) rectangle (0.5,4.25);
    \foreach \i in {0,...,5}{
      \draw[very thick] (0.75,0.5*\i+0.5) -- (1,0.5*\i);
    }
    \draw[very thick] (1.125,0.5*2+0.5) -- (1.375,0.5*2);
    \node at (1.175,0.25) {\tiny$2$};
    \node at (1.175,0.75) {\tiny$2$};
  \end{tikzpicture}}}=
  \vcenter{\hbox{\begin{tikzpicture}[scale=0.5]
    \foreach \i in {0,...,8}{
      \draw[very thick] (-0.5,0.5*\i) -- (1.5,0.5*\i);
    }
    \draw[very thick, fill=purple!30!white] (0,.25) rectangle (0.5,4.25);
    \foreach \i in {0,...,5}{
      \draw[very thick] (0.75,0.5*\i+0.5) -- (1,0.5*\i);
    }
    \node at (1.175,0.25) {\tiny$2$};
    \node at (1.175,0.75) {\tiny$2$};
    \node at (1.175,1.25) {\tiny$2$};
  \end{tikzpicture}}}.
  \end{equation}
  As such, the only term of $e_{9\,3} = [e_{9\,8},e_{8\,3}]_q$ that survives precomposition by $P_q(\Gl_{8})e_{9\,3}$ is $e_{9\,8}e_{8\,7}e_{7\,6}e_{6\,5}e_{5\,4}e_{4\,3}$.
  Finally,
  \begin{equation}
      \vcenter{\hbox{\begin{tikzpicture}[scale=0.5]
    \foreach \i in {0,...,8}{
      \draw[very thick] (-0.5,0.5*\i) -- (1.5,0.5*\i);
    }
    \draw[very thick, fill=purple!30!white] (0,.25) rectangle (0.5,4.25);
    \foreach \i in {0,...,5}{
      \draw[very thick] (0.75,0.5*\i+0.5) -- (1,0.5*\i);
    }
    \foreach \i in {0,...,4}{
      \node at (1.175,0.5*\i + 0.25) {\tiny$2$};
    }
    \draw[very thick] (1.125,0.5*5+0.5) -- (1.375,0.5*5);
  \end{tikzpicture}}}=[2]
  \vcenter{\hbox{\begin{tikzpicture}[scale=0.5]
    \foreach \i in {0,...,8}{
      \draw[very thick] (-0.5,0.5*\i) -- (1.5,0.5*\i);
    }
    \draw[very thick, fill=purple!30!white] (0,.25) rectangle (0.5,4.25);
    \foreach \i in {0,...,5}{
      \draw[very thick] (0.75,0.5*\i+0.5) -- (1,0.5*\i);
    }
    \foreach \i in {0,...,5}{
      \node at (1.175,0.5*\i + 0.25) {\tiny$2$};
    }
  \end{tikzpicture}}}.
  \end{equation}
  Thus
  \begin{equation}
  P_q(\Gl_{8})e_{9\,3}^2 = [2]
     \vcenter{\hbox{\begin{tikzpicture}[scale=0.5]
    \foreach \i in {0,...,8}{
      \draw[very thick] (-0.5,0.5*\i) -- (1.5,0.5*\i);
    }
    \draw[very thick, fill=purple!30!white] (0,.25) rectangle (0.5,4.25);
    \foreach \i in {0,...,5}{
      \draw[very thick] (0.75,0.5*\i+0.5) -- (1,0.5*\i);
    }
    \foreach \i in {0,...,5}{
      \node at (1.175,0.5*\i + 0.25) {\tiny$2$};
    }
  \end{tikzpicture}}},\quad\quad\text{or equivalently}\quad\quad
  P_q(\Gl_{8})e_{9\,3}^{(2)} = 
     \vcenter{\hbox{\begin{tikzpicture}[scale=0.5]
    \foreach \i in {0,...,8}{
      \draw[very thick] (-0.5,0.5*\i) -- (1.5,0.5*\i);
    }
    \draw[very thick, fill=purple!30!white] (0,.25) rectangle (0.5,4.25);
    \foreach \i in {0,...,5}{
      \draw[very thick] (0.75,0.5*\i+0.5) -- (1,0.5*\i);
    }
    \foreach \i in {0,...,5}{
      \node at (1.175,0.5*\i + 0.25) {\tiny$2$};
    }
  \end{tikzpicture}}},
  \end{equation}  
  recovering \cref{eq:first_rung} again. Using similar arguments, we see that the only term of $e_{9,6}$ that is left after precomposing by $P_q(\Gl_{8})e_{9\,3}^{(2)}$ is $e_{9,8}e_{8,7}e_{7,6}$. By the same reasoning then,
  \begin{equation}
  P_q(\Gl_{8})e_{9\,3}^{(2)}e_{9\,6} = 
     \vcenter{\hbox{\begin{tikzpicture}[scale=0.5]
    \foreach \i in {0,...,8}{
      \draw[very thick] (-0.5,0.5*\i) -- (1.5,0.5*\i);
    }
    \draw[very thick, fill=purple!30!white] (0,.25) rectangle (0.5,4.25);
    \foreach \i in {0,...,5}{
      \draw[very thick] (0.75,0.5*\i+0.5) -- (1,0.5*\i);
    }
    \foreach \i in {0,...,2}{
      \node at (1.175,0.5*\i + 0.25) {\tiny$3$};
    }
    \foreach \i in {3,...,5}{
      \node at (1.175,0.5*\i + 0.25) {\tiny$2$};
    }
  \end{tikzpicture}}}.
  \end{equation}
  Finally, again
  \begin{equation}
    \vcenter{\hbox{\begin{tikzpicture}[scale=0.5]
    \foreach \i in {0,...,8}{
      \draw[very thick] (-0.5,0.5*\i) -- (2,0.5*\i);
    }
    \draw[very thick, fill=purple!30!white] (0,.25) rectangle (0.5,4.25);
    \foreach \i in {0,...,5}{
      \draw[very thick] (0.75,0.5*\i+0.5) -- (1,0.5*\i);
    }
    \foreach \i in {0,...,2}{
      \node at (1.175,0.5*\i + 0.25) {\tiny$3$};
    }
    \foreach \i in {3,...,5}{
      \node at (1.175,0.5*\i + 0.25) {\tiny$2$};
    }
    \draw[very thick] (1.5,0.5*1+0.5) -- (1.75,0.5*1);
  \end{tikzpicture}}}=
  \vcenter{\hbox{\begin{tikzpicture}[scale=0.5]
    \foreach \i in {0,...,8}{
      \draw[very thick] (-0.5,0.5*\i) -- (2,0.5*\i);
    }
    \draw[very thick, fill=purple!30!white] (0,.25) rectangle (0.5,4.25);
    \foreach \i in {0,...,5}{
      \draw[very thick] (0.75,0.5*\i+0.5) -- (1,0.5*\i);
    }
    \foreach \i in {0,2}{
      \node at (1.175,0.5*\i + 0.25) {\tiny$3$};
    }
    \foreach \i in {1}{
      \node at (1.175,0.5*\i + 0.25) {\tiny$4$};
    }
    \foreach \i in {3,...,5}{
      \node at (1.175,0.5*\i + 0.25) {\tiny$2$};
    }
  \end{tikzpicture}}} = 0,
  \end{equation}
  by applying \cref{eq:combinfork} and \cref{eq:no_disorder} and so
  \begin{equation}
      P_q(\Gl_{8})e_{9\,3}^{(2)}e_{9\,6}e_{9\,7}^3 = 
      [2][3]\vcenter{\hbox{\begin{tikzpicture}[scale=0.5]
    \foreach \i in {0,...,8}{
      \draw[very thick] (-0.5,0.5*\i) -- (1.5,0.5*\i);
    }
    \draw[very thick, fill=purple!30!white] (0,.25) rectangle (0.5,4.25);
    \foreach \i in {0,...,5}{
      \draw[very thick] (0.75,0.5*\i+0.5) -- (1,0.5*\i);
    }
    \foreach \i in {0,...,1}{
      \node at (1.175,0.5*\i + 0.25) {\tiny$6$};
    }
    \foreach \i in {2}{
      \node at (1.175,0.5*\i + 0.25) {\tiny$3$};
    }
    \foreach \i in {3,...,5}{
      \node at (1.175,0.5*\i + 0.25) {\tiny$2$};
    }
  \end{tikzpicture}}}
  \end{equation}
  Here, the factors of $[2]$ and $[3]$ come from the factor in  \cref{eq:combinfork} as
  \begin{equation}
    \vcenter{\hbox{\begin{tikzpicture}[scale=0.5]
    \foreach \i in {0,...,8}{
      \draw[very thick] (-0.5,0.5*\i) -- (2,0.5*\i);
    }
    \draw[very thick, fill=purple!30!white] (0,.25) rectangle (0.5,4.25);
    \foreach \i in {0,...,5}{
      \draw[very thick] (0.75,0.5*\i+0.5) -- (1,0.5*\i);
    }
    \foreach \i in {0}{
      \node at (1.175,0.5*\i + 0.25) {\tiny$4$};
    }
    \foreach \i in {1,2}{
      \node at (1.175,0.5*\i + 0.25) {\tiny$3$};
    }
    \foreach \i in {3,...,5}{
      \node at (1.175,0.5*\i + 0.25) {\tiny$2$};
    }
    \draw[very thick] (1.5,0.5*1+0.5) -- (1.75,0.5*1);
  \end{tikzpicture}}}=
  \vcenter{\hbox{\begin{tikzpicture}[scale=0.5]
    \foreach \i in {0,...,8}{
      \draw[very thick] (-0.5,0.5*\i) -- (2,0.5*\i);
    }
    \draw[very thick, fill=purple!30!white] (0,.25) rectangle (0.5,4.25);
    \foreach \i in {0,...,5}{
      \draw[very thick] (0.75,0.5*\i+0.5) -- (1,0.5*\i);
    }
    \foreach \i in {0,...,1}{
      \node at (1.175,0.5*\i + 0.25) {\tiny$4$};
    }
    \foreach \i in {2}{
      \node at (1.175,0.5*\i + 0.25) {\tiny$3$};
    }
    \foreach \i in {3,...,5}{
      \node at (1.175,0.5*\i + 0.25) {\tiny$2$};
    }
  \end{tikzpicture}}}\quad\;\text{and}\quad\;
  \vcenter{\hbox{\begin{tikzpicture}[scale=0.5]
    \foreach \i in {0,...,8}{
      \draw[very thick] (-0.5,0.5*\i) -- (2,0.5*\i);
    }
    \draw[very thick, fill=purple!30!white] (0,.25) rectangle (0.5,4.25);
    \foreach \i in {0,...,5}{
      \draw[very thick] (0.75,0.5*\i+0.5) -- (1,0.5*\i);
    }
    \foreach \i in {0}{
      \node at (1.175,0.5*\i + 0.25) {\tiny$5$};
    }
    \foreach \i in {1}{
      \node at (1.175,0.5*\i + 0.25) {\tiny$4$};
    }
    \foreach \i in {2}{
      \node at (1.175,0.5*\i + 0.25) {\tiny$3$};
    }
    \foreach \i in {3,...,5}{
      \node at (1.175,0.5*\i + 0.25) {\tiny$2$};
    }
    \draw[very thick] (1.5,0.5*1+0.5) -- (1.75,0.5*1);
  \end{tikzpicture}}}=[2]
  \vcenter{\hbox{\begin{tikzpicture}[scale=0.5]
    \foreach \i in {0,...,8}{
      \draw[very thick] (-0.5,0.5*\i) -- (2,0.5*\i);
    }
    \draw[very thick, fill=purple!30!white] (0,.25) rectangle (0.5,4.25);
    \foreach \i in {0,...,5}{
      \draw[very thick] (0.75,0.5*\i+0.5) -- (1,0.5*\i);
    }
    \foreach \i in {0,...,1}{
      \node at (1.175,0.5*\i + 0.25) {\tiny$5$};
    }
    \foreach \i in {2}{
      \node at (1.175,0.5*\i + 0.25) {\tiny$3$};
    }
    \foreach \i in {3,...,5}{
      \node at (1.175,0.5*\i + 0.25) {\tiny$2$};
    }
  \end{tikzpicture}}}\quad\;\text{and}\quad\;
  \vcenter{\hbox{\begin{tikzpicture}[scale=0.5]
    \foreach \i in {0,...,8}{
      \draw[very thick] (-0.5,0.5*\i) -- (2,0.5*\i);
    }
    \draw[very thick, fill=purple!30!white] (0,.25) rectangle (0.5,4.25);
    \foreach \i in {0,...,5}{
      \draw[very thick] (0.75,0.5*\i+0.5) -- (1,0.5*\i);
    }
    \foreach \i in {0}{
      \node at (1.175,0.5*\i + 0.25) {\tiny$6$};
    }
    \foreach \i in {1}{
      \node at (1.175,0.5*\i + 0.25) {\tiny$5$};
    }
    \foreach \i in {2}{
      \node at (1.175,0.5*\i + 0.25) {\tiny$3$};
    }
    \foreach \i in {3,...,5}{
      \node at (1.175,0.5*\i + 0.25) {\tiny$2$};
    }
    \draw[very thick] (1.5,0.5*1+0.5) -- (1.75,0.5*1);
  \end{tikzpicture}}}=[3]
  \vcenter{\hbox{\begin{tikzpicture}[scale=0.5]
    \foreach \i in {0,...,8}{
      \draw[very thick] (-0.5,0.5*\i) -- (1.5,0.5*\i);
    }
    \draw[very thick, fill=purple!30!white] (0,.25) rectangle (0.5,4.25);
    \foreach \i in {0,...,5}{
      \draw[very thick] (0.75,0.5*\i+0.5) -- (1,0.5*\i);
    }
    \foreach \i in {0,...,1}{
      \node at (1.175,0.5*\i + 0.25) {\tiny$6$};
    }
    \foreach \i in {2}{
      \node at (1.175,0.5*\i + 0.25) {\tiny$3$};
    }
    \foreach \i in {3,...,5}{
      \node at (1.175,0.5*\i + 0.25) {\tiny$2$};
    }
  \end{tikzpicture}}}.
  \end{equation}
  Removing the $[3]!$ we see that
  \begin{equation}
      P_q(\Gl_{8})e_{9\,3}^{(2)}e_{9\,6}e_{9\,7}^{(3)} = 
      \vcenter{\hbox{\begin{tikzpicture}[scale=0.5]
    \foreach \i in {0,...,8}{
      \draw[very thick] (-0.5,0.5*\i) -- (1.5,0.5*\i);
    }
    \draw[very thick, fill=purple!30!white] (0,.25) rectangle (0.5,4.25);
    \foreach \i in {0,...,5}{
      \draw[very thick] (0.75,0.5*\i+0.5) -- (1,0.5*\i);
    }
    \foreach \i in {0,...,1}{
      \node at (1.175,0.5*\i + 0.25) {\tiny$6$};
    }
    \foreach \i in {2}{
      \node at (1.175,0.5*\i + 0.25) {\tiny$3$};
    }
    \foreach \i in {3,...,5}{
      \node at (1.175,0.5*\i + 0.25) {\tiny$2$};
    }
  \end{tikzpicture}}},
  \end{equation}
  as expected.
\end{example}

\subsection{Equivalence of Elias' and Normalising Constants}

Let $T'= T^{(s-1)}$ be the semistandard tableau which is the parent of $T$. That is, it contains the same boxes except those labelled $s$. These are the ``new'' boxes in $T$. Let $T'$ have shape $\lambda'\subset \lambda$ and the numbers $c'_{ij}$ and $N'_{ij}$ have their definition with respect to the primed variables.

Now, recall Elias' conjecture concerned the shape of successive \textit{column} semi-standard tableaux from \cref{conj:elias2}.  To bring this in line with the above calculation we will need to take transposes which we denote $\lambda^\perp$.

\begin{lemma}\label{lem:long}
  With the above notation,
  $\mathcal N'(T'; T)^2 = \kappa_{\lambda'^\perp}^{\lambda^\perp}$.
\end{lemma}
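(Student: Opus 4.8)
The plan is to prove that $\mathcal N(T';T)^2 = \kappa_{\lambda'^\perp}^{\lambda^\perp}$ by directly matching the simplified product formula for $\mathcal N(T';T)^2$ derived just above the statement against the product formula for the intersection form given in \cref{conj:elias2} — bearing in mind that the latter has \emph{not yet been proven} at this point in the paper, so the statement must be read as an identity of two explicit rational functions of quantum integers, and what we must actually verify is that these two explicit expressions agree. (The implication ``this identity together with \cref{lem:g-t} proves \cref{conj:elias}'' is then what gets spelled out later in the section.)

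First I would specialise the formula for $\mathcal N(T^{(j-1)};T^{(j)})^2$ to the top level $j = s$, where $T^{(s)} = \lambda$, $T^{(s-1)} = \lambda' = T'$, so that $N_{is} = N_{is}^T$ records precisely the new boxes in row $i$ of $\lambda^\perp\setminus\lambda'^\perp$ — under transposition, ``a new box in row $i$ of the column-tableau'' becomes ``an entry $s$ in row $i$ of the row-tableau $T$'', i.e. $N_{is} \in \{0,1\}$ after transposing since $\lambda\setminus\lambda'$ has no column longer than $1$ (equivalently $\lambda^\perp\setminus\lambda'^\perp$ has no row longer than $1$, which is exactly the hypothesis of \cref{conj:elias2}). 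With $N_{is}\in\{0,1\}$, each quantum binomial $\binom{m}{N_{is}}$ is either $1$ (when $N_{is}=0$) or $[m]$ (when $N_{is}=1$), so the square root disappears in the sense that $\mathcal N(T';T)^2$ becomes a genuine ratio of products of single quantum integers:
\begin{equation*}
  \mathcal N(T';T)^2 = \frac{\displaystyle\prod_{\substack{i<k\le s\\ N_{is}=1}} [c_{ik}+M_{ik}^{s}-1]}{\displaystyle\prod_{\substack{i<k<s\\ N_{is}=1}} [c_{ik}+\dot M_{ik}^{s}]}.
\end{equation*}
Next I would identify the quantities $c_{ik}+M_{ik}^{s}-1$ and $c_{ik}+\dot M_{ik}^{s}$ in terms of the axial distances of $\lambda$ (not $T$): since $M_{ik}^{s} = T^{(s)}_k - T^{(s)}_i - T^{(s-1)}_k + T^{(s-1)}_i = (\lambda_k-\lambda'_k) - (\lambda_i-\lambda'_i) = N_{ks}-N_{is}$, one gets $c_{ik}+M_{ik}^{s} = c_{ik}^\lambda + N_{ks} - N_{is}$, and similarly $\dot M_{ik}^{s} = M_{ik}^{s} + N_{ks} = 2N_{ks}-N_{is}$ (using $M_{ik}^{s-1}=M_{ik}^s$ at the top, careful with the off-by-one in the two equivalent formulas for $\dot M$). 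Plugging in the case $N_{is}=1$: the numerator factor is $[c_{ik}^\lambda + N_{ks} - 2]$, and when additionally $N_{ks}=1$ the denominator factor is $[c_{ik}^\lambda + N_{ks} - 1 + \text{(correction)}]$ — at which point the numerator and denominator factors for pairs with $N_{ks}=1$ will telescope/cancel, leaving exactly the factors indexed by pairs $i<k$ with a new box in row $i$ but \emph{no} new box in row $k$, i.e. after transposing, exactly the index set $\{i<j : \text{no new box in row }i,\ \text{new box in row }j\}$ of \cref{conj:elias2} (the transpose swaps the roles of ``row $i$ has a new box'' appropriately; I'd track this bookkeeping carefully). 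The surviving factors should assemble precisely into $\prod [c^{\lambda^\perp}_{ij}]/[c^{\lambda^\perp}_{ij}-1]$, using $c^{\lambda}_{ik} = c^{\lambda^\perp}_{ki}$-type identities relating axial distances of a partition and its transpose.

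The main obstacle will be the combinatorial bookkeeping in the cancellation step: carefully splitting the index sets $\{i<k\}$ according to the four cases $(N_{is},N_{ks})\in\{0,1\}^2$, checking that the pairs with $N_{ks}=1$ contribute a numerator factor that is cancelled by a denominator factor coming from a shifted pair, and that the pairs with $(N_{is},N_{ks})=(1,0)$ survive with exponent exactly matching the $[c]/[c-1]$ shape — and then correctly transposing everything so that ``row index in $T$'' becomes ``row index in the column tableau''. A secondary subtlety is the range discrepancy between the numerator product ($i<k\le s$) and denominator product ($i<k<s$): the pairs with $k=s$ have $N_{ss}=1$ always (the box at the bottom of column... — again a transpose-careful statement), so these top-row numerator factors $[c_{is}^\lambda + 1 - 2] = [c_{is}^\lambda - 1]$ are exactly the ``$-1$'' denominators that pair with the ``$[c]$'' numerators of the surviving $(1,0)$ pairs, which is the mechanism producing the ratio form. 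Once the index sets are matched the identity is forced, so there is no analytic content — just a finite rearrangement of quantum-integer factorials — but it does need to be done with care.
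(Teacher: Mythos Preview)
Your argument has a genuine error at its first step: the claim $N_{is}\in\{0,1\}$ is false. By definition $N_{is}$ counts the entries equal to $s$ in \emph{row} $i$ of the (column-)semistandard tableau $T$; since rows of $T$ are only weakly increasing, a single row may contain many copies of $s$. What semistandardness guarantees is that $\lambda\setminus\lambda'$ is a \emph{horizontal} strip --- at most one new box per \emph{column} --- which after transposition is precisely the hypothesis of \cref{conj:elias2} for $\kappa_{\lambda'^\perp}^{\lambda^\perp}$. But a horizontal strip can have arbitrarily many boxes in one row: for instance $\lambda=(4,2)$, $\lambda'=(2,2)$ gives $N_{1s}=2$. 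You have confused ``at most one new box per column of $\lambda$'' with ``at most one new box per row of $\lambda$''. (Incidentally your formula $M_{ik}^{s}=N_{ks}-N_{is}$ is also off by one level: at $j=s$ the defining sum for $M_{ik}^s$ is empty, so $M_{ik}^s=0$ and $\dot M_{ik}^s=N_{ks}$; what you computed is $M_{ik}^{s-1}$.)

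Once $N_{is}\in\{0,1\}$ fails, the rest of your plan collapses: the quantum binomials $\gaussianquant{m}{N_{is}}$ do not reduce to single quantum integers, and the four-case split over $(N_{is},N_{ks})\in\{0,1\}^2$ has no meaning. The paper's proof handles general $N_{is}$ throughout by working with falling products of length $N_{is}$: it expands each $\gaussianquant{c_{ik}-1}{N_{is}}\big/\gaussianquant{c_{ik}+N_{ks}}{N_{is}}$ as a ratio of products $[c_{ik}-1]\cdots[c_{ik}-N_{is}]$, introduces auxiliary factors to telescope in $k$, and reorganises both $\mathcal N(T';T)^2$ and $\kappa_{\lambda'^\perp}^{\lambda^\perp}$ into the common triple product
\[
\prod_{\substack{1\le i\le s\\ N_{is}\neq 0}}\ \prod_{i<k\le s}\ \prod_{\ell=1}^{\lambda_{k-1}-\lambda_k-N_{k-1\,s}}\frac{[c_{ik}-\ell]}{[c_{ik}-N_{is}-\ell]}.
\]
Your sketch would be valid only in the special case where exactly one box is added at level $s$, i.e.\ essentially the standard-tableau case $\underline x=\underline{1^a}$, which is far from the general statement needed to deduce \cref{conj:elias}.
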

\begin{proof}
Elias' conjectural form for $\kappa_{\lambda'^\perp}^{\lambda^\perp}$ is given as
\begin{align}\label{eq:big_elias}
\kappa_{\lambda'^\perp}^{\lambda^\perp}
                          &=\prod_{\substack{1 \le a \\\text{no new box in}\\ \text{column $a$ of $\lambda$}}}\quad 
                            \prod_{\substack{a < b \\ \text{a new box in}\\ \text{column $b$ of $\lambda$}}} \;
                            \qfrac{\lambda'^\perp_a - \lambda'^\perp_b + b - a}{\lambda'^\perp_a - \lambda'^\perp_b + b - a-1}\nonumber\\
\intertext{Now, by the fact that column $a$ has no new boxes, $\lambda'^\perp_a = \lambda^\perp_a$. If we now group the columns labelled by $b$ by the row they appear in, which we will index by $i$, we get}
                          &=\prod_{\substack{1 \le i\le s\\ N_{is} \neq 0}}\quad
                            \prod_{\substack{1 \le a \le \lambda_i \\\text{no new box}\\ \text{in column $a$}}}\quad 
\prod_{b = \lambda_i - N_{is} + 1}^{\lambda_i}\quad 
                            \qfrac{\lambda^\perp_a - \lambda'^\perp_b + b - a}{\lambda^\perp_a - \lambda'^\perp_b + b - a-1}\nonumber\\
\intertext{Note that $N_{is} = 0$ for all $i > s$, hence the bounds on the first product. We have also swapped the order of product here.  Since the box indexed by $b$ is always in the $i$th row of $\lambda$, we have $\lambda'^\perp_b = i-1$, so}
                          &=\prod_{\substack{1 \le i\le s\\ N_{is} \neq 0}}\quad
                            \prod_{\substack{1 \le a \le \lambda_i \\\text{no new box}\\ \text{in column $a$}}}\quad 
                            \prod_{b = \lambda_i - N_{is} + 1}^{\lambda_i}\quad 
                            \qfrac{\lambda^\perp_a - i + b - a+1}{\lambda^\perp_a - i + b - a}\nonumber\\
\intertext{and we can collapse the telescoping product. Thus}
                          &=\prod_{\substack{1 \le i\le s\\ N_{is} \neq 0}}\quad
                            \prod_{\substack{1 \le a \le \lambda_i \\\text{no new box}\\ \text{in column $a$}}}\quad 
                            \qfrac{\lambda^\perp_a - i + \lambda_i - a + 1}{\lambda^\perp_a - i + \lambda_i - a + 1 - N_{is}}  .  \nonumber\\
\intertext{Now we repeat the row/column index swapping trick, by grouping the columns $a$ by the rows they appear in. For notational purposes, we will let $k$ be the row \textit{below} the last box in column $a$ so that $\lambda^\perp_a = k-1$.}
                          &=\prod_{\substack{1 \le i\le s\\ N_{is} \neq 0}}\quad
                            \prod_{i < k \le s}\quad
                            \prod_{a = \lambda_k + 1}^{\lambda_{k-1} - N_{k-1s}}\quad 
                            \qfrac{k - i + \lambda_i - a}{k - i + \lambda_i - a - N_{is}}\nonumber\\
\intertext{Finally, we adjust our index ranges and clean up the formula:}   
                          &=\prod_{\substack{1 \le i\le s\\ N_{is} \neq 0}}\quad
                            \prod_{i < k \le s}\quad
                            \prod_{\ell = 1}^{\lambda_{k-1} - \lambda_k - N_{k-1s}}\quad 
                            \qfrac{k - i + \lambda_i - \lambda_k - \ell }{k - i + \lambda_i - \lambda_k - \ell - N_{is}}\nonumber\\
                          &=\prod_{\substack{1 \le i\le s\\ N_{is} \neq 0}}\quad
                            \prod_{i < k \le s}\quad
                            \prod_{\ell = 1}^{\lambda_{k-1} - \lambda_k - N_{k-1s}}\quad 
                            \qfrac{c_{i,k} - \ell }{c_{i,k} - N_{i,s} - \ell}.
\end{align}
\vspace{3em}
\noindent
On the other hand, recall from \cref{eq:tolstoy_rr2} that Tolstoy gives
\begin{equation}
    \mathcal N'(T'; T)^2  = \prod_{1\le i<k\le j\le n}\qbinom{c_{i,k} + M_{i,k}^j - 1 }{N_{i,j}} \;\big/ \prod_{1\le i<k< j\le n}\qbinom{c_{i,k} + M_{i,k}^{j-1}+N_{i,j}}{N_{i,j}} .
\end{equation}
We will manipulate this until it matches \cref{eq:big_elias} exactly.
Note that
\begin{align}
    c_{i,k} &= c'_{i,k} + N_{i,s} - N_{k,s}\\
    M^j_{i,k} &= \begin{cases}
    M'^j_{i,k} + N_{k,s} - N_{i,s} & j < s \\
    0 & j \ge s
    \end{cases}
\intertext{so that}
    c_{i,k} + M^j_{i,k} &= \begin{cases}
    c'_{i,k} + M'^j_{i,k} & j < s \\
    c'_{i,k} + N_{i,s} - N_{k,s} & j \ge s
    \end{cases}\\
    c_{i,k} + M^{j-1}_{i,k} +N_{i,j} &= \begin{cases}
    c'_{i,k} +  M'^{j-1}_{i,k} + N'_{i,j} & j < s \\
    c'_{i,k} + N_{i,s}  & j \ge s
    \end{cases}   .
\end{align}
As such, in $\mathcal N(T'; T)^2 $, all the terms with $j<s$ cancel and we are left with
\begin{align}\label{eq:big_carter}
\mathcal N(T'; T)^2  &= \prod_{1\le i<k\le s}\qbinom{c_{i,k} - 1 }{N_{i,s}} \big/
               \prod_{1\le i<k< s}\qbinom{c_{i,k} + N_{k,s}}{N_{i,s}}   .
\nonumber\\
\intertext{Putting in a $k=s$ case to the second product (noting $\lambda_s = N_{s,s}$), and splitting the indices $i$ and $k$, we obtain}
            &= \prod_{1\le i< s}\left(\qbinom{c_{i,s} + \lambda_s }{N_{i,s}}\times
               \prod_{i<k\le s} \qbinom{c_{i,k} -1}{N_{i,s}} \big/
               \qbinom{c_{i,k} + N_{k,s}}{N_{i,s}}\right)   .    \nonumber\\
\intertext{Clearly the only terms contributing anything are those where $N_{i,s} \neq 0$}
            &= \prod_{\substack{1 \le i\le s\\ N_{i,s} \neq 0}}
               \left(\qbinom{c_{i,s} + \lambda_s }{N_{i,s}}\times
               \prod_{i<k\le s} \frac{[c_{i,k}-1] \cdots [c_{i,k}-N_{i,s}]}{[c_{i,k}+N_{k,s}] \cdots [c_{i,k}+N_{k,s}-N_{i,s}]}\right)   .
\nonumber\\
\intertext{We introduce and remove a factor (note that each of $c_{i,k}-1$ down to $c_{i,k}-N_{i,s}$ are non-zero so that this factor is non-zero)}
            &= \prod_{\substack{1 \le i\le s\\ N_{i,s} \neq 0}}
               \left(\qbinom{c_{i,s} + \lambda_s }{N_{i,s}}\times
               \prod_{i<k\le s} \frac{[c_{i,k}-1] \cdots [c_{i,k}-N_{i,s}]}{[c_{i,k+1}-1] \cdots [c_{i,k+1}-N_{i,s}]}
               \frac{[c_{ik+1}-1] \cdots [c_{i,k+1}-N_{i,s}]}{[c_{i,k}+N_{k,s}] \cdots [c_{i,k}+N_{k,s}-N_{i,s}]}\right)
\nonumber\\
\intertext{to allow us to telescope the product}
            &= \prod_{\substack{1 \le i\le s\\ N_{i,s} \neq 0}}
               \left(\qbinom{c_{i,s} + \lambda_s }{N_{i,s}}\times
               \frac{[c_{i,i+1}-1] \cdots [c_{i,i+1}-N_{i,s}]}{[c_{i,s+1}-1] \cdots [c_{i,s+1}-N_{i,s}]}\times
               \prod_{i<k\le s}
               \frac{[c_{i,k+1}-1] \cdots [c_{i,k+1}-N_{i,s}]}{[c_{i,k}+N_{k,s}] \cdots [c_{i,k}+N_{k,s}-N_{i,s}]}\right)
\nonumber\\
\intertext{but $c_{i,s+1} = c_{i,s} + \lambda_s + 1$ and $c_{i,i+1}-1 = \lambda_{i} - \lambda_{i+1}$, so we can simplify}
            &= \prod_{\substack{1 \le i\le s\\ N_{i,s} \neq 0}}\left(\qbinom{\lambda_i - \lambda_{i+1} }{N_{i,s}}\times
               \prod_{i<k\le s}
               \frac{[c_{i,k+1}-1] \cdots [c_{i,k+1}-N_{i,s}]}{[c_{i,k}+N_{k,s}] \cdots [c_{i,k}+N_{k,s}-N_{i,s}]}\right)
\nonumber\\
\intertext{and writing $c_{i,k+1} = c_{i,k} + \lambda_k - \lambda_{k+1}+1$,}
            &= \prod_{\substack{1 \le i\le s\\ N_{i,s} \neq 0}}
               \left(\qbinom{\lambda_i - \lambda_{i+1} }{N_{i,s}}\times
               \prod_{i<k\le s}
               \frac{[c_{i,k}+\lambda_k-\lambda_{k+1}] \cdots [c_{i,k}+\lambda_k-\lambda_{k+1}-N_{i,s}+1]}{[c_{i,k}+N_{k,s}] \cdots [c_{i,k}+N_{k,s}-N_{i,s}]}\right)   .
\nonumber\\
\intertext{But now $N_{k,s} \le \lambda_k - \lambda_{k-1}$ so that $c_{i,k}+N_{k,s} \le c_{i,k}+\lambda_k-\lambda_{k+1}$ and so (introducing some more terms if necessary)}
            &= \prod_{\substack{1 \le i\le s\\ N_{i,s} \neq 0}}
               \left(\qbinom{\lambda_i - \lambda_{i+1} }{N_{i,s}}\times
               \prod_{i<k\le s}
               \frac{[c_{i,k}+\lambda_k-\lambda_{k+1}] \cdots [c_{i,k}+N_{k,s}+1]}{[c_{i,k}+\lambda_k-\lambda_{k+1}-N_{i,s}] \cdots [c_{i,k}+N_{k,s}-N_{i,s}]}\right)    .
\nonumber\\
\intertext{We can re-introduce the $c_{i,k+1}$}
            &= \prod_{\substack{1 \le i\le s\\ N_{i,s} \neq 0}}
               \left(\qbinom{\lambda_i - \lambda_{i+1} }{N_{i,s}}\times
               \prod_{i<k\le s}
               \frac{[c_{i,k+1}-1] \cdots [c_{i,k+1}-\lambda_k + \lambda_{k+1}+N_{k,s}]}{[c_{i,k+1}-N_{i,s}-1] \cdots [c_{i,k+1}-\lambda_k+\lambda_{k+1}+N_{k,s}-N_{i,s}-1]}\right)
\nonumber\\
\intertext{and write the product using a dummy index:}
            &= \prod_{\substack{1 \le i\le s\\ N_{i,s} \neq 0}}
               \left(\qbinom{\lambda_i - \lambda_{i+1} }{N_{i,s}}\times
               \prod_{i<k\le s}
               \prod_{\ell = 1}^{\lambda_{k} - \lambda_{k+1}-N_{k,s}}
               \frac{[c_{i,k+1}-\ell]}{[c_{i,k+1}-N_{i,s}-\ell]}\right)   .
\nonumber\\ 
\intertext{Now, if we replace all the $k+1$s with $k$s we pick up two terms on each side of the product range.}
            &= \prod_{\substack{1 \le i\le s\\ N_{i,s} \neq 0}}\left(
               \qbinom{\lambda_i - \lambda_{i+1} }{N_{i,s}}\times
               \prod_{i<k\le s}
               \prod_{\ell = 1}^{\lambda_{k-1} - \lambda_{k}-N_{k-1,s}}
               \frac{[c_{i,k}-\ell]}{[c_{i,k}-N_{i,s}-\ell]}
               \right.\nonumber\\
               &\left.\quad\quad\quad\quad\quad\quad
               \times\prod_{\ell = 1}^{\lambda_{s} - \lambda_{s+1}-N_{s,s}}
               \frac{[c_{i,s+1}-\ell]}{[c_{i,s+1}-N_{i,s}-\ell]}
               \times
               \prod_{\ell = 1}^{\lambda_{i} - \lambda_{i+1}-N_{i,s}}
               \frac{[c_{i,i+1}-N_{i,s}-\ell]}{[c_{i,i+1}-\ell]}
            \right)    .
\nonumber\\
\intertext{Note that $\lambda_{s+1} = 0$ and $\lambda_{s} = N_{s,s}$ so the second-to-last product is actually empty. Hence}
            &= \prod_{\substack{1 \le i\le s\\ N_{i,s} \neq 0}}\left(
               \qbinom{\lambda_i - \lambda_{i+1} }{N_{i,s}}\times
               \prod_{i<k\le s}
               \prod_{\ell = 1}^{\lambda_{k-1} - \lambda_{k}-N_{k-1,s}}
               \frac{[c_{i,k}-\ell]}{[c_{i,k}-N_{i,s}-\ell]}
               \times
               \prod_{\ell = 1}^{\lambda_{i} - \lambda_{i+1}-N_{i,s}}
               \frac{[c_{i,i+1}-N_{i,s}-\ell]}{[c_{,ii+1}-\ell]}
            \right)   .
\nonumber\\
\intertext{Finally, recall that $c_{i,i+1} = \lambda_i-\lambda_{i+1}+1$ and so}
            &= \prod_{\substack{1 \le i\le s\\ N_{i,s} \neq 0}}\left(
               \qbinom{\lambda_i - \lambda_{i+1} }{N_{i,s}}\times
               \prod_{i<k\le s}
               \prod_{\ell = 1}^{\lambda_{k-1} - \lambda_{k}-N_{k-1,s}}
               \frac{[c_{i,k}-\ell]}{[c_{i,k}-N_{i,s}-\ell]}
               \times
               \frac{[N_{i,s}]![\lambda_i - \lambda_{i+1}-N_{i,s}]!}{[\lambda_i - \lambda_{i+1}]!}
            \right)
\nonumber\\
            &= \prod_{\substack{1 \le i\le s\\ N_{i,s} \neq 0}}\quad
               \prod_{i<k\le s}
               \prod_{\ell = 1}^{\lambda_{k-1} - \lambda_{k}-N_{k-1,s}}
               \frac{[c_{i,k}-\ell]}{[c_{i,k}-N_{i,s}-\ell]}\nonumber .
\end{align}
This matches exactly 
with \cref{eq:big_elias} and so we deduce that
$$\mathcal N'(T'; T)^2  = \kappa_{\lambda'^\perp}^{\lambda^\perp}$$
as desired.
\end{proof}

\begin{theorem}
    Elias' conjecture (\cref{conj:elias}) holds.
\end{theorem}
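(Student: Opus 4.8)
The plan is to bridge the recursive formula for the cell Gram determinants of \cref{sec:determinants} with a Howe-dual computation of the cellular pairing, using \cref{lem:long} to match Tolstoy's normalisation constants with Elias' conjectural values. Throughout, write $\kappa^{\mathrm{E}}$ for the conjectural quantity on the right of \cref{conj:elias}, and $\kappa$ for the true intersection form.

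The first and main step is to identify the bases. Bubbling the construction \eqref{eq:inductive_diagram} all the way down, as in \cref{rem:define_our_basis}, gives the \emph{orthogonal} clasp basis $\{\mathbb{L}_t\}$ of $S(\underline x,\blam)$ indexed by paths $t$ from $\emptypart$ to $\blam$ in $\Y^n_{\underline x}$, and unwinding the block-diagonal recursion \eqref{eq:recursive-det} shows that
\[
  \langle\mathbb{L}_t,\mathbb{L}_{t'}\rangle=\delta_{t,t'}\!\!\prod_{e\,:\,\bmu_1\to\bmu_2\,\in\,t}\!\!\kappa_{\bmu_1}^{\bmu_2}.
\]
On the $U_q(\Gl_m)$ side, the un-normalised Gelfand--Tsetlin vector $\widetilde{|T\rangle}:=\bigl(\prod_j\mathcal N(T^{(j-1)};T^{(j)})\bigr)|T\rangle$ is, by \cref{lem:g-t}, the alternating product of extremal projectors and of monomials in the $e_{j\,i}$ applied to a highest weight vector. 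Under the skew Howe surjection $\Psi_{\underline a,\underline b}$ (\cref{thm:howe_comm}) the extremal projectors are sent to the web clasps $\JW$ and the monomial basis to the elementary light ladders, so $\Psi_{\underline a,\underline b}\bigl(\widetilde{|T\rangle}\bigr)=\mathbb{L}_{t(T)}$, where $t(T)$ is the path of $\Y^n_{\underline x}$ obtained from $T$ by transposing all the partitions $T^{(j)}$ (this transpose is forced by \cref{lem:long}). The delicate point here, which I expect to be the real obstacle, is the bookkeeping: pinning down precisely which $\Gl_m$-weight $\underline b$ and which highest weight correspond to a given $S(\underline x,\blam)$, and checking that $\Psi$ carries the extremal projectors of $U_q(\Gl_m)$ exactly to the $\JW$-clasps, with the leading-coefficient normalisation of \cref{lem:murderous_jw} intact.

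Granting the identification, I would push the pairing through Howe duality. The pairing identity $(g_1,g_2)_{\underline b}=\langle\Psi_{\underline a,\underline b}(g_1),\Psi_{\underline a,\underline b}(g_2)\rangle$, together with the observation that $(-,-)_{\underline b}$ is the Shapovalov form of the relevant Verma module --- for which $\{|T\rangle\}$ is an orthonormal basis, this being the purpose of Tolstoy's constants $\mathcal N$ --- yields
\[
  \langle\mathbb{L}_{t(T)},\mathbb{L}_{t(T')}\rangle=\bigl\langle\widetilde{|T\rangle},\widetilde{|T'\rangle}\bigr\rangle=\delta_{T,T'}\prod_j\mathcal N(T^{(j-1)};T^{(j)})^2=\delta_{T,T'}\!\!\prod_{e\,:\,\bmu_1\to\bmu_2\,\in\,t(T)}\!\!\kappa_{\bmu_1}^{\bmu_2,\mathrm{E}},
\]
the last equality being \cref{lem:long} after re-indexing the level product as a product over the edges of $t(T)$. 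Comparing with the previous display gives, for every path $t$ from $\emptypart$ to any $\blam$ in any reduced Young poset,
\[
  \prod_{e\,:\,\bmu_1\to\bmu_2\,\in\,t}\kappa_{\bmu_1}^{\bmu_2}=\prod_{e\,:\,\bmu_1\to\bmu_2\,\in\,t}\kappa_{\bmu_1}^{\bmu_2,\mathrm{E}}.
\]

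It remains to pass from these path-wise identities to an edge-wise one. Fix any edge $e\colon\bmu_1\to\bmu_2$ of the Young graph and put $c=|\bmu_2|-|\bmu_1|$. The partition $\bmu_1$ is the endpoint of some path $t_1$ --- for instance a standard tableau, taking $\underline x=\underline{1^{|\bmu_1|}}$ --- and then $t_2:=t_1e$ is a path ending at $\bmu_2$ in $\Y^n_{\underline{1^{|\bmu_1|}c}}$. Applying the last display to $t_1$ and to $t_2$ and dividing one identity by the other gives $\kappa_{\bmu_1}^{\bmu_2}=\kappa_{\bmu_1}^{\bmu_2,\mathrm{E}}$. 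As $e$ was arbitrary, \cref{conj:elias2} holds, and hence so does \cref{conj:elias}.
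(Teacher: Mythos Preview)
Your proposal is correct and follows essentially the same route as the paper: identify the un-normalised Gelfand--Tsetlin vectors with the clasp basis $\mathbb{L}_t$ via $\Psi$ (extremal projectors $\mapsto$ clasps, monomials $\mapsto$ elementary light ladders), transport the cellular form across Howe duality to the Shapovalov-type form $(-,-)_{\underline b}$, and invoke \cref{lem:long} to match Tolstoy's normalisation with the conjectural $\kappa^{\mathrm E}$. The paper's proof is terser: it asserts directly that ``the normalising factor must be the intersection form'' at each step, whereas you deduce only the path-product identity and then cleanly strip off one edge at a time by dividing the identity for $t_2 = t_1 e$ by that for $t_1$. Your extra paragraph is a legitimate tightening of the argument --- the paper's level-by-level claim is correct (the Gelfand--Tsetlin construction is normalised stage by stage, and so is the $\mathbb{L}_t$ recursion), but your extraction avoids having to say why --- and it costs nothing. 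The bookkeeping caveat you flag (which $\underline b$, which highest weight, the transpose) is exactly the point the paper also leaves implicit; choosing $\underline b$ to be the dominant word of weight $\blam$ gives $\underline b=\blam^\perp$ as a $\Gl_m$-weight and makes $(-,-)_{\underline b}$ the Shapovalov form on the correct Verma module.
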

\begin{proof}
Note the construction of $|T\rangle$ as a product of successive $F_-(T^{(j-1)}; T^{(j)})$ acting on the highest weight vector.
Each $F_-(T^{(j-1)}; T^{(j)})$ is in turn (up to a normalising factor) simply a extremal projector composed with $E_\mu$. Since we know that extremal projectors are sent to clasps, we thus recover the construction in \cref{eq:inductive_diagram_2}.

We know that these vectors are orthonormal. The clasped light ladders are certainly orthogonal, but they must be normalised by sucessive $\kappa$ as per \cref{conj:elias2} to have unit inner product with themselves. As per the discussion in \cref{subsec:cellular2}, this inner product is identified between the Weyl module and webs and so the normalisation factor $\mathcal{N}'(T';T)$ is the required value.

However, we have seen in \cref{lem:long} that this normalising factor is exactly the conjectured intersection form.
\end{proof}

\section{Acknowledgments}
The authors are indebted to the reviewer for their detailed and substantial comments.
The second author was supported by a DTP studentship from the Department of Pure Mathematics and Mathematical Sciences of the University of Cambridge funded by the Engineering and Physical Sciences Research Council under grant EP/N509620/1.

\bibliographystyle{alphaabbr}
\bibliography{all_cite}

\end{document}